\rmfamily\documentclass[english,11pt]{smfartVScomptage}

\usepackage{smfenum,smfthm, amsmath,amssymb,amscd,mathrsfs,euscript,color}
\usepackage{stmaryrd,mathabx,upgreek} 
\usepackage{enumitem} 
\usepackage[latin1]{inputenc}
\usepackage[T1]{fontenc}
\usepackage{xcolor}

\usepackage{smfenum,smfthm, amsmath,amssymb,amscd,mathrsfs,euscript,color}
\usepackage{stmaryrd,mathabx,upgreek} 
\usepackage{enumitem} 
\usepackage[latin1]{inputenc}
\usepackage[T1]{fontenc}
\usepackage{xcolor}
\input{xypic}
\usepackage{hyperref}

\numberwithin{equation}{section} 
\def\AA{\mathfrak{A}}

\def\CC{\mathbb{C}}

\def\G{{ G}}
\def\H{{ H}}

\def\L{{\rm L}}
\def\M{{\rm M}}

\def\O{{\rm O}}

\def\R{{ R}}

\def\Cc{\EuScript{C}}

\def\Gg{\mathcal{G}}
\def\Hh{\mathcal{H}}

\def\Zz{\mathcal{Z}}

\def\La{{\it\Lambda}}

\def\a{\alpha} 

\def\d{\delta}

\def\g{\gamma}

\def\k{\boldsymbol{k}}

\def\o{\mathfrak{o}}

\def\kk{\boldsymbol{k}}
\def\ll{\mathrm{l}}

\def\({\left(}
\def\){\right)}
\def\>{\geqslant}
\def\<{\leqslant}

\def\Hom{\operatorname{Hom}}
\def\End{\operatorname{End}}

\def\Mat{\operatorname{M}}
\def\GL{\operatorname{GL}}
\def\Gal{\operatorname{Gal}}
\def\tr{\operatorname{tr}}

\def\dim{\operatorname{dim}}

\def\diag{\operatorname{diag}}

\def\st{\operatorname{st}}

\def\oo{\EuScript{O}}
\def\Id{\mathrm{Id}}

\def\1{\mathbf{1}}

\def\Hb{\mathrm{Hom}_{\mathrm{spec}}}

\def\Fl{\overline{\mathbb{F}_\ell}}

\def\Zl{\overline{\mathbb{Z}_\ell}}

\def\ll{\boldsymbol{l}}

\def\Ker{{\rm Ker}}

\def\Mat{\boldsymbol{{\sf M}}}

\def\Cc{\EuScript{C}}

\title[Godement--Jacquet~gamma factors of distinguished representations]
{Godement--Jacquet~gamma factors of distinguished representations of~$\GL_n(\mathbb{F}_q)$} 
\author{Robert Kurinczuk}
\address{School of Mathematics and Statistics, University of Sheffield,
Sheffield, S3 7RH, United Kingdom}
\email{robkurinczuk@gmail.com}
\author{Nadir Matringe}
\address{Shanghai Institute for Mathematics and Interdisciplinary Sciences, Block A, International Innovation Plaza, No. 657 Songhu Road, Yangpu District, Shanghai, China}
\email{nadirmatringe@outlook.fr}
\author{Vincent S\'echerre} 
\address{Laboratoire de Math\'emati\-ques de Versailles, UVSQ, CNRS, Universit\'e Paris-Saclay, 78035, Versailles, France}
\email{vincent.secherre@uvsq.fr}

\begin{document}

\maketitle

{\hfill\textcolor{green}{\bf \today}}

\begin{abstract} Let~$\k$ be a finite field of characteristic~$p$. In the 1960s, Kondo attached non-abelian Gauss sums to irreducible~$\mathbb{C}$-representations of~$\GL_n(\k)$, and computed them in terms of Green parameters. On the other hand, the Godement-Jacquet functional equation in which they occur was established by Macdonald in the 1980s. We first revisit Macdonald's and Kondo's results with a different perspective, in the process of generalizing their constructions to representations with coefficients in~$\mathbb{Z}[\sqrt{p}^{-1},\mu_p]$-algebras. Then, when~$p$ is odd and $R$  is an algebraically closed field of characteristic different to~$p$, our main result shows that the Godement-Jacquet gamma factor of a cuspidal~irreducible~$R$-representation, which is distinguished with respect to the subgroup fixed by a Galois or an inner involution, coincides with the sign of the associated period under the normalizer of this subgroup.  % this result is used in a companion paper to classify the~$\ell$-modular cuspidal representations of non-archimedean $\GL_n$ distinguished by a Galois involution. 
Finally, we compute the gamma factors of these distinguished representations in terms of Green's and James' parametrizations of irreducible cuspidal~$R$-representations. 
\end{abstract}

\setcounter{tocdepth}{1}
\tableofcontents

\section{Introduction}
\subsection{Background}
For~$\GL_n$ over a non-archimedean local field, root numbers and central gamma function values of (pairs of) distinguished smooth irreducible complex representations have been extensively studied.  
%We refer to the far from being exhaustive list \cite{FQ}, \cite{Hak},  \cite{Ok}, \cite{Anand}, \cite{HO}, \cite{MO}, \cite{Delboy}, \cite{PR}, \cite{X}, \cite{SecENS}. 

\subsection{Finite fields gamma factors and their functional equation}\label{sec KoMac}
Now let~$\k$ be a finite field of characteristic~$p$ and cardinality~$q$.  In this setting, finite field avatars of gamma factors for complex representations of~$\GL_n(\k)$ have been considered in several works, for example \cite{PSBook}, \cite{Roddity}, \cite{Nien}, \cite{SZ}, and \cite{bakeberg2025modellgammafactors} in the setting of representations with coefficients in a $\mathbb{Z}[\sqrt{p}^{-1},\mu_p]$-algebra. For cuspidal representations, finite field gamma factors often coincide with associated depth zero~$p$-adic counterparts; this has been proved in \cite{Ye}, and independently in \cite{Nienetal}, for Rankin--Selberg gamma factors of cuspidal complex representations in unequal rank (and in \cite{Ye} in equal rank whenever the non-archimedean $L$-factors are trivial), and the same holds for the Godement--Jacquet gamma factors under consideration here: either following from Macdonald's results in \cite{Macdonald} and known properties of the local Langlands correspondence, or by a simple argument on lifting of finite field matrix coefficients. In particular, many results for finite field gamma factors of complex representations can be deduced from known properties of $p$-adic gamma factors. However for representations with coefficients in more general rings, it can be useful to work in the other direction due the lack of analytic tools. %This is in fact the main motivation for this paper, and we work at the level of finite fields throughout this whole work.  

%The two main and original references for 
The gamma factors that we consider here were introduced in the complex setting by \cite{Kondo} and studied by \cite{Macdonald}: For~$\pi$ an irreducible~$\mathbb{C}$-representation of~$\GL_n(\k)$ and~$\psi:\k\rightarrow \mathbb{C}^\times$ a non-trivial character, Kondo \cite{Kondo} defined the non-abelian Gauss sum~$\gamma(\pi,\psi)$, and computed it in terms of the Green parameter of~$\pi$.  It was then proved by Springer \cite{Springer} in the cuspidal case, and Macdonald \cite{Macdonald} whenever the cuspidal support of~$\pi$ has no trivial factor, that such Gauss sums are the proportionality constants of finite field analogues of Godement--Jacquet functional equations \cite{GJ}.  It is worth mentioning that a recent and geometric proof of Kondo's formula of~$\gamma(\pi,\psi)$ was given by Braverman--Kazhdan in \cite[Section 9]{BK}, initiating much research on the topic of non-abelian Gauss sums for finite reductive groups.

\subsection{Our results on Godement--Jacquet gamma factors} 
In Section \ref{GJFEs}, we review the definition of finite field Godement--Jacquet gamma factors of~$\GL_n(\k)$ and extend this definition from the setting of complex representations (on~$\mathbb{C}$-vector spaces) to the setting of representations on~$R$-modules for quite general $R$.  More precisely we introduce Godement--Jacquet gamma factors under the minimal hypothesis that~$R$ is a~$\mathbb{Z}[\mu_p,\sqrt{p}^{-1}]$-algebra, this hypothesis allowing for a normalized Fourier transform as in Section \ref{fouriersection}. 

We obtain the cuspidal functional equation as a byproduct of our results on the sign of periods in Section \ref{sec abstract setup} (which gives a new proof of the functional equation when~$R=\mathbb{C}$), and we prove in Section \ref{sec GJFE} the multiplicativity relation, which also follows from Kondo's formula when $R=\CC$, following Godement and Jacquet.  From this, see for example Corollary \ref{coro mult GJ}, we obtain generalizations of Macdonald's results to modular representations.  Section \ref{sec GJFE} also provides an explicit formula for gamma factors of irreducible representations over algebraically closed fields of characteristic different from~$p$. 
%
%In this paper we provide a different approach to Macdonald's results; we obtain the cuspidal functional equation as a byproduct of our results on the sign of periods in Section \ref{sec abstract setup}, and we prove in Section \ref{sec GJFE} the multiplicativity relation, which also follows from Kondo's formula when $R=\CC$, following Godement and Jacquet. 
%From this, in particular from Corollary \ref{coro mult GJ}, one recovers and generalizes Macdonald's results to representations over more general rings. 
%
%
%Let~$\k$ be a finite field of characteristic~$p$ and cardinality~$q$, and~$R$ be an algebraically close field of characteristic~$\ell\neq p$. Let~$\psi:\k\to R^\times$ be a non trivial character, %and~$\pi$ be an irreducible representation of~$G=\GL_n(\k)$. When~$R$ has characteristic zero, Kondo defined in \cite{Kondo} the non abelian Gauss sum~$\gamma(\pi,\psi)$, and computed it in terms of the Green parameter of~$\pi$. More recently, a proof of geometric nature was given in \cite{BK}, initiating much research on the topic of non abelian Gauss sums. %In particular, 

\subsection{Gamma factors of distinguished representations}\label{sec intro dist gamma} Then we move on to study gamma factors of distinguished representations. Our main motivation is to apply the present paper to classify cuspidal representations of non-archimedean $\GL_n$ distinguished by a Galois involution. This is possible thanks to our previous paper \cite{KMS}, which reduces the problem to the depth zero situation. We note that for complex representations finite field Rankin-Selberg gamma factors and distinction problems have been considered before, although not much, see for example \cite{Nienquadratic} for a relative converse theorem, and \cite{Joperiod}. 

One of the main results of this paper sharpens \cite[Lemma 5.5]{Joperiod}, and answers the question raised before this lemma by Jo in ibid.  We prove that if a cuspidal irreducible representation of~$\GL_n(\k)$ over an algebraically closed field of characteristic different from~$p$ admits either a Galois model, a linear model, or a twisted linear model, then its Godement--Jacquet gamma factor\footnote{Perhaps, it would be more appropriate in this setting to call this gamma factor the \emph{Kondo gamma factor}. However, to stay consistent with the~$p$-adic setting and emphasize the importance of the functional equation, we call this the Godement--Jacquet gamma factor.}  is the sign of the action of the normalizer of the corresponding symmetric subgroup on the corresponding period.  Moreover, we compute the Godement--Jacquet gamma factor of such representations in terms of their Green/James parameters. 

While in this paper, we eventually specialize our computations to the case where~$R$ is a field, as this is the case needed for our application in \cite{KMSII}, our motivation to develop gamma factors for representations in the broader setting considered in Sections \ref{GJFEs}-\ref{sec sym pair}, lies in the further study of distinguished representations and functoriality in finite field analogues of the local Langlands correspondence in families (cf.,~\cite{MR3867634,DHKMbanal} in the~$p$-adic setting and~\cite{MR4580514,MR4698317} for finite fields), and a generalization of the relative converse theorem using families.
%
%then can one compute its Godement--Jacquet gamma factor, and how does this gamma factor relate to the sign character associated to the finite distinguishing period?
 
\subsection{Our main results}
We state here a special case of our main result:% for algebraically closed fields and cuspidal representations. 
\begin{theo}\label{introtheorem1}
Let~$\pi$ be an irreducible cuspidal~$R$-representation of~$\GL_n(\k)$ with~$n\geqslant 2$, suppose the characteristic~$p$ of~$\k$ is odd, and ~$R$ is an algebraically closed field of characteristic different to~$p$.\footnote{When defining the gamma factor one chooses a square root of~$|M_n(\k)|$, this choice appears in the Godement--Jacquet functional equation as we use a normalized Fourier transform.  In this Theorem, in both parts,~$|M_n(\k)|$ is a square of an integer, and we choose the square root in~$R$ agreeing with the positive square root in~$\mathbb{Z}$.}
\begin{enumerate}
\item (Galois distinction) Suppose that~$\k/\k_0$ is quadratic, and~$\pi$ is distinguished by~$\GL_n(\k_0)$. 
 Let~$\psi_0:\kk_0\to F^\times$ be a non-trivial character, then \[\gamma(\pi,\psi_0\circ \tr_{\k/k_0})=\omega_{\pi}(\delta),\]
 where~$\delta\in\kk$ is an element with trace~$\tr_{\kk/\kk_0}(\delta)=0$. 
%Then, if~$\psi$ is a non-trivial character of~$\k$ trivial on~$\k_0$, we have \[\gamma(\pi,\psi)=1.\] 
\item\label{mainth2} (Linear, and twisted-linear distinction) Suppose~$n=2m$ and~$H=\GL_m(\k) \times \GL_m(\k)$, or~$H=\GL_m(\ll)$ for~$\ll/\kk$ a quadratic extension.  Then the normalizer~$N_{\GL_n(\k)}(H)$ acts on~$\Hom_{R[\H]}(\pi,R)$ by a character~$\chi_{\pi}$ (cf.,~Section \ref{sec linear models}).   Let~$\psi$ be a non-trivial character of~$\kk$.  Then
\begin{align*}
\gamma(\pi,\psi)=\chi_{\pi}(w),
\end{align*}
where~$w={\rm antidiag}(1_m,1_m)$ if~$H=\GL_m(\k) \times \GL_m(\k)$, and~$w={\rm diag}(1_m, -1_m)$ if~$H=\GL_m(\ll)$, (embedded in~$G$ as in Section \ref{sec linear models}).
%Suppose~$\pi$ admits a linear or  twisted linear model, as in Section \ref{sec linear models}, then~$\pi$ is self-dual.  Moreover, writing~$\pi=\st_r(\rho)$ with~$\rho$ supercuspidal and~$n=rf$ as in Section \ref{}, we have \rob{check cuspidal non-supercuspidal formula }
%\begin{equation*}
%\g(\pi,\psi)=\begin{cases}
 %(-\xi'(\d))^r&\text{ if $f \geqslant 2$};\\
 %\xi'(-1)^m&\text{ if $f=1$}.\end{cases}
%\end{equation*}
\end{enumerate}
\end{theo}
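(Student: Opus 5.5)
The approach is to run the Godement--Jacquet functional equation against a well-chosen test function built from the period. Write $G=\GL_n(\k)$, $M=M_n(\k)$, and let $\widehat{\Phi}$ be the normalized Fourier transform
\[\widehat{\Phi}(y)=|M|^{-1/2}\sum_{x\in M}\Phi(x)\psi(\tr(xy)).\]
For cuspidal $\pi$ the functional equation reads
\[\sum_{g\in G}\widehat{\Phi}(g)\,\pi^\vee(g)=\gamma(\pi,\psi)\sum_{g\in G}\Phi(g)\,\pi(g^{-1})\]
(up to the normalization fixed in Section \ref{GJFEs}), with the sums taken on matrix coefficients. Since $\Phi$ is supported on $G$ here, the sums over $M$ reduce to sums over $G$.

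\textbf{Step 1: the test function.} Let $\lambda\in\Hom_{R[H]}(\pi,R)$ be a nonzero period. By the multiplicity one results of Section \ref{sec sym pair}, $\pi$ is $H$-distinguished exactly when it is $\theta$-distinguished, and $\pi^\vee\simeq\pi^\theta$. Pair both sides with $\lambda$ and a vector, and choose $\Phi=\1_{H}$, or more precisely $\1_{gH}$ for a suitable coset representative. The left side then becomes a sum of $\lambda\circ\pi^\vee(g)$ weighted by $\widehat{\1_H}(g)$. The right side becomes $\gamma(\pi,\psi)$ times the period integral itself. In the Galois case the choice is $\Phi=\1_{G_0}$ with $G_0=\GL_n(\k_0)$; if $\psi=\psi_0\circ\tr$, then $\widehat{\1_{M_n(\k_0)}}$ is proportional to the indicator of the orthogonal complement $\delta M_n(\k_0)$.

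\textbf{Step 2: identify the Fourier transform.} In each of the three cases, compute $\widehat{\1_{\mathfrak h}}$ for the Lie algebra $\mathfrak h$ of $H$; it is proportional to the indicator of $\mathfrak h^\perp$. In the Galois case $\mathfrak h^\perp=\delta\,\mathfrak h$, and in the linear and twisted linear cases $\mathfrak h^\perp$ is the $(-1)$-eigenspace of $\theta$. The point is then to show that the invertible elements of $\mathfrak h^\perp$ form a single coset $wH$, or $\delta H$ in the Galois case, which carries the normalizer action. Intersecting with $G$ is harmless because cuspidality kills the contribution of the singular part; this uses the vanishing of sums over unipotent radicals established earlier. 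With this, the left side of the functional equation equals $\chi_\pi(w)$ times the right side, up to the ratio of the normalization constants.

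\textbf{Step 3: normalization.} Check that the factors $|\mathfrak h|/|M|^{1/2}$ and the relative sizes of $H$ and $\mathfrak h^\perp\cap G$ cancel exactly, using that $|M|$ is a square with positive root. This gives $\gamma(\pi,\psi)=\chi_\pi(w)$. In the Galois case the element $\delta\cdot 1_n$ is central, so $\chi_\pi(\delta)=\omega_\pi(\delta)$. Alternatively, I would invoke the abstract sign-of-period result of Section \ref{sec abstract setup} directly with the involution $\theta$ and the element $w$ or $\delta$.

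\textbf{Main obstacle.} I expect the hardest step to be Step 2. One must show that the nonsingular part of $\mathfrak h^\perp$ is a single $H$-orbit, or at least that the period is constant on it, and that the singular contributions vanish for cuspidal $\pi$. In the linear case, for instance, $\mathfrak h^\perp$ consists of the matrices $\begin{pmatrix}0&a\\b&0\end{pmatrix}$, whose invertible elements form $wH$. The twisted case is analogous but needs $p$ odd. Controlling the singular terms is where cuspidality and $p$ odd must enter.
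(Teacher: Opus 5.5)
There is a genuine gap, and it sits in your Step 3, where the ``relative sizes of $H$ and $\mathfrak h^\perp\cap G$'' are cancelled. We have $G\cap\mathfrak h=H$, $G\cap w\mathfrak h=wH$, and $f_{v,\lambda}$ is left $H$-invariant. So testing the functional equation against $\1_{\mathfrak h}$ (or against $\1_H$, after removing the singular part) gives exactly
\[|H|\,\chi_\pi(w)^{-1}\lambda(v)=\gamma(\pi,\psi)\,|H|\,\lambda(v).\]
Dividing by $|H|$ is legitimate only if $|H|\in R^\times$. The theorem, however, is asserted for every algebraically closed $R$ of characteristic $\ell\neq p$. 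In all three cases $q-1$ divides $|H|$, where $q=|\k|$. So for every $\ell\mid q-1$, and in particular always for $\ell=2$, your identity reads $0=0$ and carries no information on $\gamma(\pi,\psi)$.

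Switching to $\1_{gH}$ only replaces $|H|$ by the norm operator $\sum_{h\in H}\pi(h)$, and you would then have to prove that this is non-zero. Reducing to characteristic zero would instead require lifting $\pi$, together with its period and $\chi_\pi$, to a distinguished cuspidal representation over $K$. You do not address this, and it is delicate for cuspidal non-supercuspidal $\pi$.

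The missing idea is to prove the identity for \emph{all} test functions by working fibrewise on $H\backslash G$. Unfold
\[\Zz(\Phi,f_{v,\lambda})=\sum_{x\in H\backslash G}\Bigl(\sum_{h\in H}\Phi(hx)\Bigr)f_{v,\lambda}(x).\]
For $a\in\mathfrak h\setminus H$, the stabilizer $\{g\in G : ag=a\}$ contains the unipotent radical of a proper parabolic. By cuspidality, the $H$-orbits in $\mathfrak h$ other than $H$ therefore contribute zero, so you may replace the inner sum over $H$ by the sum over $\mathfrak h$. Then apply Poisson summation to that inner sum with $\mathfrak h^\perp=w\mathfrak h$, and pull out $\chi_\pi(w)$ using $w\in N_G(H)$. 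Finally, undo the linearisation on the Fourier side by the same argument with left stabilizers.

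This yields $\Zz(\widehat\Phi,f_{v,\lambda}^\vee)=\chi_\pi(w)\,\Zz(\Phi,f_{v,\lambda})$ for every $\Phi\in\Cc(\Gg,R)$ and every $v$, with no factor $|H|$ anywhere. Taking $\Phi=\delta_1$ and $v$ with $\lambda(v)\neq0$ then gives $\gamma(\pi,\psi)=\chi_\pi(w)$ in every characteristic $\neq p$. This is the paper's Proposition \ref{prop main}.

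Note also that cuspidality is not needed where you invoke it. Once $\widehat\Phi$ is a multiple of $\1_{w\mathfrak h}$, its singular part never enters, because $\Zz$ only sums over $G$. Cuspidality is needed instead to pass from $\1_H$ to $\1_{\mathfrak h}$, or equivalently to use the strong functional equation of Proposition \ref{prop cuspidal GJFE}. With that correction, and since $w^2\in H$ gives $\chi_\pi(w)^{-1}=\chi_\pi(w)$, your argument is a valid and shorter proof when $|H|$ is invertible in $R$, for instance in characteristic zero.
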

  To prove the theorem, we use a Poisson summation formula which also provides a new proof of the Godement--Jacquet functional equation for cuspidal representations, see Section \ref{sec GJcusp}.  
  
% Adapting the argument of Theorem \ref{introtheorem1}, for irreducible cuspidal representations over algebraically closed fields of characteristic different to~$\k$, in Section \ref{sec GJcusp} we obtain a different proof of the Godement--Jacquet functional equation using Poisson summation.

\subsection{$p$-adic inspiration for our approach} Let~$F/F_0$ be a quadratic extension of non-archimedean local fields,~$D_0$ be the non split quaternionic~$F_0$-algebra,~$H=\GL_n(F_0)$ or~$\GL_{n/2}(D_0)$ (in which case~$n$ is assumed to be even), and put~$G=\GL_n(F)$. Fix moreover~$\xi:F\to \mathbb{C}^\times$ a non-trivial character, trivial on~$F_0$, and~$\pi$ an~$H$-distinguished cuspidal representation of~$G$. Then, when~$n=2$, amongst many results of this flavour, it is proved in \cite{Hak}, that the Godement-Jacquet gamma factor~$\g(1/2,\pi,\xi)$ of \cite{GJ} is equal to one (\cite{Hak} has the assumption that the central character of~$\pi$ is trivial, but it is nowhere used in [ibid.]). In particular \cite[Theorem 5.1]{Hak} proves this triviality result when~$H=D_0^\times$, by using the Poisson summation formula for~$(\M_2(F),D_0,\psi\circ \tr)$. This idea actually goes back to \cite[Proof of Theorem 3.2]{Delboy}, where Deligne reproves the~$n=1$ case due to Fr\"ohlich and Queyrut \cite{FQ}, using Poisson summation.   When~$H=\GL_n(F_0)$, this method was extended by Ok in his thesis \cite[Main Theorem 1]{Ok} to prove that~$\g(1/2,\pi,\psi)=1$ for any~$n$ 

There are two main difficulties in Ok's proof: the first are convergence issues which do not appear in the case~$n=1$ or~$n=2$ with~$H=D_0^\times$, and the second is to explain why one can replace matrix coefficients of~$\pi$ in the Godement-Jacquet functional equation, by relative matrix coefficients. On the other hand, one facilitating aspect of dealing with~$p$-adic fields is that~$H$ is equal to~$\M_n(F)$ up to a set of measure zero. 

To prove Theorem \ref{introtheorem1}, we adapt the same method to our distinction problems, %i.e., the Poisson summation formula,
 %to prove similar results replacing~$F_0$ by a finite field~$\k_0$, 
 replacing~$F_0$ by the finite field~$\k_0$ and generalizing our coefficient field from~$\mathbb{C}$ to any algebraically closed field of characteristic different to that of~$\k_0$. In this situation, the two difficulties in Ok's proof immediately disappear, however a new difficulty appears:~$H$ is not equal to~$\M_n(\k_0)$ up to a set of measure zero. To circumvent this, we specialize to cuspidal representations, and use cuspidality to kill enough contributions in our computation, see Section \ref{sec abstract setup}, and especially Proposition \ref{prop main}  for the details. 

\subsection{Kondo's formula and computing the sign}
For~$R$ an algebraically closed field, the irreducible (Galois/linear/twisted-linear)-distinguished supercuspidal~$R$-representations are either self-dual (linear/twisted-linear) cases, or Galois-self-dual (Galois case).  Using this and Green's and James' parametrizations of cuspidal~$R$-representations in Section \ref{computation}, we compute the gamma factors appearing in Theorem \ref{introtheorem1}.  Using multiplicativity of gamma factors, we reduce to the supercuspidal case, where we give two methods -- one using the sign of periods description, the other directly from Kondo's formula.  

\subsection{Application to non Archimedean general linear groups}\label{Future}
In \cite{KMS}, for $F$ a non Archimedean local field of residual characteristic not $2$ and $\ell$ a prime different from this residual characteristic, we reduced the classification of~$\ell$-modular cuspidal representations of non-archimedean~$\GL_n$ distinguished by a Galois involution to depth zero.  In the follow up paper \cite{KMSII}, we classify these distinguished representations, with the main result of this paper as a crucial input.

\subsection{Acknowledgements}
Robert Kurinczuk was supported by the Engineering and Physical Sciences Research Council (EPSRC) grant EP/V001930/1. Nadir Matringe was supported by the Research Start-up Fund of the Shanghai Institute for Mathematics and Interdisciplinary Sciences (SIMIS).  Vincent S\'echerre was supported by the Institut Universitaire de France.

\section{Weak and Strong Godement--Jacquet Functional Equations}\label{GJFEs}
We recall that $\k$ is a finite field of characteristic $p$. Let~$\Gg$ be a finite separable~$\k$-algebra, i.e.,~$\Gg$ is a finite product of matrix algebras over finite extensions of~$\k$,  and we set~$G=\Gg^\times$. The algebra~$\Gg$ is equipped with its canonical surjective trace
\[\tr_{\Gg/\k}:\Gg\rightarrow \k.\] %We set~$G=\Gg^\times$. 

We denote by $\mu_p$ the group of roots of $p$-roots of unity in $\CC^\times$. Let $R$ be a commutative $\mathbb{Z}[\mu_p]$-algebra. For $\psi_p\in \Hom(\kk,\CC^\times)=\Hom(\kk,\mathbb{Z}[\mu_p]^\times)=\Hom(\kk,\mu_p)$, we set $\Psi_p=\psi_p\circ \tr_{\Gg/\k}$. Such a character $\psi_p$  defines a character $(\psi_p)_R:\k\rightarrow R^\times$, which we simply denote by $\psi_R$. As a convention, we denote by $\Psi_R:\Gg\rightarrow R^\times$ the character defined by the formula  \[\Psi_R=\psi_R\circ \tr_{\Gg/\k}.\] 
We set
\[\Hb(\kk, R^\times)=\{\psi_R, \  \psi_p\in\Hom(\kk,\mu_p)\}\] 
to be the group of \emph{spectral characters} which we use in this paper to do Fourier theory.\\

\noindent \textit{{\bf Assumption 1:} from now on we assume that the natural map~$\mu_p\to R^\times$ is injective, i.e., that $\Hb(\kk,R^\times)$ contains a non trivial element.}\\

Without this assumption, we could still do Fourier theory with respect to the trivial character, but this becomes too exotic for serious applications. 

For $t\in \k$,~$\psi\in\Hom(\kk,R^\times)$, we write $\psi_t:x\in \k\to \psi(tx)\in R^\times$. 

%In particular, we have~$\Hb(\Gg,R^\times)=\{\psi_R= (\psi_p \circ  \tr_{\Gg/\k})\otimes R:\psi_p\in \Hom(\kk,\mu_p)\}}$
\begin{rema}[Duality]\label{duality}
Let~$\psi_p\in\Hom(\kk,\mu_p)$ be non-trivial.  Then the map~$x\mapsto \psi_{p,x}$ induces a bijection between~$\kk$ and~$\Hom(\kk,\mu_p)$. Putting $\psi:=\psi_R$, we deduce that 
\[\Hb(\kk,R^\times)=\{\psi_x: x\in \kk\}.\]
Moreover, the natural map~$\kk\rightarrow \Hb(\kk,R^\times)$ defined by~$x\mapsto \psi_x$ is bijective.
\end{rema}

\begin{rema}
For this paper the examples of most interest to us are cases where $R$ is an integral domain (in fact, we are most interested in the case of algebraically closed fields).   In this case, we have the equality~{$\Hb(\kk,R^\times)=\Hom(\kk,R^\times)$.}
\end{rema}

 When clear from context we will often omit the index $R$, and write $\psi=\psi_R$ and $\Psi=\Psi_R$.

{\begin{rema}[Orthogonality of character values]\label{orthcharvalues}
If~$\psi=(\psi_p)_R\in \Hb(\kk,R^\times)$, with $\psi_p$ non-trivial, then $\sum_{y\in\Gg}\Psi(y)=0$ since $\sum_{y\in\Gg}\Psi_p(y)=0$.
\end{rema}}

\subsection{The Fourier transform}\label{fouriersection}

We denote by~$\Cc(\Gg,\R)$ the space of functions 
\[\Cc(\Gg,\R):=\{f:\Gg\rightarrow R\},\]
which we consider as a left~$R[G]$-module via the right regular action~$g\cdot f(x)=f(xg)$.  For~$\mathcal{X}\leqslant \Gg$ a~$G$-stable subset, we denote by~$\Cc(\mathcal{X},R)$ the submodule of~$\Cc(\Gg,\R)$ consisting of~$R$-valued functions with support in~$\mathcal{X}$.
 For~$f\in \Cc(G,R)$, we define~$f^\vee\in \Cc(G,R)$ by, for all~$g\in G$, 
 \[f^\vee(g)=f(g^{-1}).\] 
For~$m\in \Gg$, we define~$\delta_m\in \Cc(\Gg,\R)$ to be the indicator function of~$\{m\}$. \\

\noindent\textit{{\bf Assumption 2:} from now on we moreover suppose that~$R^\times$ contains a square root $|\Gg|^{1/2}$ of $|\Gg|$, which we fix. This assumption implies that $R$ has characteristic different from $p$.  Note that Assumption 2 implies Assumption 1.  }\\

{We fix a non-trivial character in $\psi\in \Hb(\kk,R^\times)$.} For~$\Phi\in \Cc(\Gg,\R)$, we define its \emph{(normalized) Fourier transform}~$\mathcal{F}_\psi(\Phi)\in \Cc(\Gg,\R)$, with respect to~$\psi$, by
\begin{align}
\label{Fourier}
\mathcal{F}_\psi(\Phi)( a)&=
|\Gg|^{-1/2}
 \sum \limits_{g\in\Gg} \Phi(g) \Psi(g a) =|\Gg|^{-1/2}\sum \limits_{g\in\Gg} \Phi(g) \Psi( ag) ,
\end{align}
the equality because~$\tr_{\Gg/\k}(ga)=\tr_{\Gg/\k}(ag)$.

The Fourier transform enjoys the following key properties:
\begin{itemize}
\item ($\mathcal{F}_\psi$ \emph{transforms convolution into multiplication}): For~$\Phi, \Phi'\in \Cc(\Gg,\R)$, set
\[\Phi\star\Phi'(x)=|\Gg|^{-1/2}\sum_{y\in \Gg}\Phi(y)\Phi'(x-y),\]
then we have
\begin{equation}\label{eq conv four} \mathcal{F}_\psi(\Phi\star \Phi')=\mathcal{F}_\psi(\Phi)\mathcal{F}_\psi(\Phi').\end{equation}
\item (\emph{The Fourier inversion formula}): As~$\sum_{g\in\Gg}\Psi(g)=0$, cf., Remark \ref{orthcharvalues}, we find that
\[\mathcal{F}_{\psi^{-1}}(\mathcal{F}_\psi(\Phi))=\Phi.\]
\item (\emph{Parseval's identity}): Applying Fourier inversion to Equation \eqref{eq conv four} with~$\Phi'$ replaced the function~$\Phi'_{-}(x)=\Phi'(-x)$, then evaluating at~$x=0$, one finds
\[\sum_{x\in G}\Phi(x)\Phi'(x)=\sum_{x\in G}\mathcal{F}_\psi(\Phi)(x)\mathcal{F}_{\psi^{-1}}(\Phi')(x).\] 
%
%\[\sum_{x\in G}\Phi(x)\Phi'(-x)=\sum_{x\in G}\widehat{\Phi'}^\psi(\Phi)(x)\widehat{\Phi}^\psi(\Phi')(x).\] 
\end{itemize}

When~$\psi$ is fixed, we will use the standard notation
\[\widehat{\Phi}:=\mathcal{F}_{\psi}(\Phi).\] 

\subsection{Godement--Jacquet Functional Equations}
Let~$(\pi,V)$ be an~$R$-representation of~$G$.  We denote by~$(\pi^\vee,V^\vee)$ its contragredient~$R$-representation, i.e., the~$R$-representation on~$V^\vee=\Hom_{R}(V,R)$ defined by~$(\pi^\vee(g)v^\vee)(v)=v^{\vee}(\pi(g^{-1})v)$.  
%
%
%For~$(\pi,V)$ a (finite dimensional) representation of~$G$ with coefficients in~$R$, we denote by~$(\pi^\vee,V^\vee)$ its contragredient representation.

 For~$v\in V$ and~$v^\vee\in V^\vee$, we define the \emph{matrix coefficient}~$f_{v,v^\vee}\in \Cc(G,R)$ by the formula 
\[f_{v,v^\vee}(g)=v^\vee(\pi(g)v).\] 

\begin{defi}
For~$\Phi\in \Cc(\Gg,R)$, and~$(v,v^\vee)\in V\times V^\vee$, define the \emph{Godement-Jacquet sum} 
\begin{equation}\label{eq GJ sum}\Zz(\Phi,f_{v,v^\vee})=\sum\limits_{g\in G} \Phi(g)f_{v,v^\vee}(g).\end{equation}
\end{defi}

We say that~$(\pi,V)$ \emph{satisfies Schur's lemma} if the natural map~$R\rightarrow \End_{R[G]}(\pi)$ defined by~$r\mapsto r \,\Id_V$, where~$\Id_V\in\End_{R[G]}(V)$ is the identity on~$V$, is an isomorphism.\footnote{For example, this is the case if~$R$ is a field of characteristic~$\ell\neq p$ which is sufficiently large for~$G$ (containing all~$|G|$-roots of unity of~$\ell$-regular order) and~$\pi$ is irreducible.}  

Suppose that~$(\pi^\vee,V^\vee)$ satisfies Schur's lemma, then following Kondo \cite{Kondo}, we define the \emph{gamma factor of~$\pi$ with respect to~$\psi$} to be the element of~$R$ defined by 
\begin{equation}\label{eq BK gamma} \gamma^K(\pi,\psi)\Id_{V^\vee}=|\Gg|^{-1/2}\sum_{g\in G}\Psi(g)\pi^\vee(g)\in \End_{R[G]}(V^\vee).\end{equation} 
%where

\begin{prop}[{cf.,~\cite[Proposition 1.2]{CurtisShinoda}}]\label{prop WGJFE irr}
Let~$(\pi,V)$ be an $R$-representation of~$G$ such that $\pi^\vee$ satisfies Schur's lemma, then for any~$\Phi\in \Cc(G,R)$ and any~$(v,v^\vee)\in V\times V^\vee$, we have 
\begin{equation}\label{eq WGJFE}
\Zz(\widehat{\Phi},(f_{v,v^\vee})^\vee)=\gamma^K(\pi,\psi)\Zz(\Phi,f_{v,v^\vee}).
\end{equation}
\end{prop}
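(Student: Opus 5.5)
We prove the identity by direct unfolding: we substitute the definition of $\widehat{\Phi}$ into the left-hand side, interchange the finite sums, and recognise the operator defining $\gamma^K(\pi,\psi)$ in \eqref{eq BK gamma}. All sums are finite, so the interchanges need no justification.

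Here is the computation. The sum $\Zz(\cdot,\cdot)$ runs over $G$ only, i.e. over $g\in G$ with $g^{-1}\in G$. By definition,
\[
\Zz(\widehat{\Phi},(f_{v,v^\vee})^\vee)=\sum_{g\in G}\widehat{\Phi}(g)\,v^\vee(\pi(g^{-1})v)
=|\Gg|^{-1/2}\sum_{g\in G}\sum_{x\in\Gg}\Phi(x)\Psi(xg)\,v^\vee(\pi(g^{-1})v).
\]
Interchanging the two sums, the next step is to show that the terms with $x\notin G$ contribute nothing, since the right-hand side only involves $\Phi$ on $G$. Assume this for now and take $x\in G$. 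Substitute $h=xg$, so that $g^{-1}=h^{-1}x$. Then
\[
v^\vee(\pi(g^{-1})v)=v^\vee(\pi(h^{-1})\pi(x)v)=(\pi^\vee(h)v^\vee)(\pi(x)v).
\]
The inner sum therefore becomes
\[
\Bigl(|\Gg|^{-1/2}\sum_{h\in G}\Psi(h)\pi^\vee(h)v^\vee\Bigr)(\pi(x)v)=\gamma^K(\pi,\psi)\,v^\vee(\pi(x)v).
\]
Here we used \eqref{eq BK gamma}. That equation is meaningful because the operator $\sum_h\Psi(h)\pi^\vee(h)$ commutes with $\pi^\vee(G)$ (conjugation invariance of $\Psi$, since $\tr_{\Gg/\k}$ is a trace) and $\pi^\vee$ satisfies Schur's lemma. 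Summing over $x\in G$ against $\Phi(x)$ gives $\gamma^K(\pi,\psi)\Zz(\Phi,f_{v,v^\vee})$, as required.

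The main obstacle is the contribution of non-invertible $x\in\Gg\smallsetminus G$, whose inner sum is
\[
\sum_{g\in G}\Psi(xg)\,f_{v,v^\vee}(g^{-1}).
\]
One cannot expect this to vanish for every $\pi$. For the trivial representation of $\GL_1$ and $x=0$ it equals $|G|\neq 0$. So some hypothesis must enter: Macdonald's condition that the cuspidal support has no trivial factor, or cuspidality. I would therefore first check the precise convention in the cited \cite[Proposition 1.2]{CurtisShinoda}. If the statement is intended for $\Phi\in\Cc(G,R)$ with $\widehat{\Phi}$ restricted to $G$, the computation above needs only the $x\in G$ terms and is complete. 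The phrase ``for any $\Phi\in\Cc(G,R)$'' in the statement suggests exactly this: $\Phi$ is supported on $G$, so $x$ ranges over $G$ from the start, and the unfolding argument suffices with no vanishing needed. This is the version I would write, flagging that the extension to $\Phi\in\Cc(\Gg,R)$ (the ``strong'' functional equation) requires the extra vanishing input, presumably treated later in the paper.
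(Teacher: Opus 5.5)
Your proof is correct and is essentially the paper's argument. The paper first notes that the case $\Phi=\delta_1$ is just the defining identity \eqref{eq BK gamma} evaluated at $v$, and then passes to general $\Phi$ by replacing $v$ with $\sum_{x\in G}\Phi(x)\pi(x)v$, which is your substitution $h=xg$ written differently; your hesitation about $x\in\Gg\smallsetminus G$ is unnecessary, because $\Cc(G,R)$ is by definition the space of functions on $\Gg$ supported in $G$, and the strong version for $\Phi\in\Cc(\Gg,R)$ is indeed handled separately later via cuspidality and multiplicativity.
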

\begin{proof}
From equation \eqref{eq BK gamma}, we deduce that for any~$(v,v^\vee)\in V\times V^\vee$, we have 
\[|\Gg|^{-1/2}\sum\limits_{g\in G} \Psi(g)(f_{v,v^\vee})^\vee(g)=\gamma^K(\pi,\psi)v^\vee(v),\] which is equivalent to 
\begin{equation}\label{eq WGJFE0} 
    \Zz(\widehat{\delta_{1}},f_{v,v^\vee}^\vee)=\gamma^K(\pi,\psi)\Zz(\delta_{1},f_{v,v^\vee})=\gamma^K(\pi,\psi)f_{v,v^{\vee}}(1).
\end{equation} The extension from~$\delta_1$ to all~$\Phi\in \Cc(G,R)$ follows from replacing~$v$ by~$\Phi*v=\sum\limits_{x\in G}\Phi(x)\pi(x)v$. 
\end{proof}

\begin{defi}\label{df WGJFEpsi}
Let~$(\pi,V)$ be an~$R$-representation of~$G$. We say that~$\pi$ satisfies the {weak Godement-Jacquet functional equation with respect to~$\psi$}, in short~$(\mathbf{WGJFE},{\psi})$, if there exists~$\gamma^{GJ}(\pi,\psi)\in R$ such that
\[\Zz(\widehat{\Phi},(f_{v,v^\vee})^\vee)=\gamma^{GJ}(\pi,\psi)\Zz(\Phi,f_{v,v^\vee}),\]
%
 %Equality \eqref{eq WGJFE} with $\gamma^{GJ}(\pi,\psi)$ in place of $\gamma^K(\pi,\psi)$ 
for all~$\Phi\in \Cc(G,R)$ and all~$(v,v^\vee)\in V\times V^\vee$, where~$\widehat{\Phi}=\mathcal{F}_\psi(\Phi)$.
\end{defi}

Assume that~$\pi$ satisfies~$(\mathbf{WGJFE},{\psi})$, then Equation \eqref{eq WGJFE0} says that for all~$(v,v^\vee)\in V\times V^\vee$:
\[v^{\vee}\left(\left(|\Gg|^{-1/2}\sum_{g\in G}\Psi(g)\pi(g^{-1})-\gamma^{GJ}(\pi,\psi)\Id_V\right)\cdot v\right)=0.\] 

In other words.

\begin{prop}\label{fct WGJFE}
Let~$(\pi,V)$ be an~$R$-representation of~$G$ which satisfies~$(\mathbf{WGJFE},{\psi})$. Then~$|\Gg|^{-1/2}\sum_{g\in G}\Psi(g)\pi^\vee(g)$ is a scalar operator in~$\End_{R[G]}(V^\vee)$. Moreover, if $\pi^\vee$ satisfies Schur's lemma, then~$\gamma^{GJ}(\pi,\psi)=\gamma^K(\pi,\psi)$ .  
\end{prop}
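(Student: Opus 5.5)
The plan is to specialise the weak functional equation to $\Phi=\delta_1$ and then recover the operator from the resulting scalar identities. Assume $(\mathbf{WGJFE},\psi)$ holds for $\pi$. For $\Phi=\delta_1$ we have $\Zz(\delta_1,f_{v,v^\vee})=f_{v,v^\vee}(1)=v^\vee(v)$, and by \eqref{Fourier}, $\widehat{\delta_1}(a)=|\Gg|^{-1/2}\Psi(a)$. Substituting into the definition \eqref{eq GJ sum} of the Godement--Jacquet sum gives
\[|\Gg|^{-1/2}\sum_{g\in G}\Psi(g)\,v^\vee(\pi(g^{-1})v)=\gamma^{GJ}(\pi,\psi)\,v^\vee(v).\]
This is the identity displayed just before the statement. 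Set $A=|\Gg|^{-1/2}\sum_{g\in G}\Psi(g)\pi(g^{-1})-\gamma^{GJ}(\pi,\psi)\Id_V\in\End_R(V)$. The identity then says $v^\vee(Av)=0$ for all $v\in V$ and $v^\vee\in V^\vee$.

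Next, transport this to $V^\vee$. By definition of the contragredient, $(\pi^\vee(g)v^\vee)(v)=v^\vee(\pi(g^{-1})v)$. Hence the operator $T=|\Gg|^{-1/2}\sum_{g}\Psi(g)\pi^\vee(g)$ on $V^\vee$ is the transpose of $|\Gg|^{-1/2}\sum_g\Psi(g)\pi(g^{-1})$, meaning $(Tv^\vee)(v)=v^\vee\bigl(|\Gg|^{-1/2}\sum_g\Psi(g)\pi(g^{-1})v\bigr)$. The vanishing above therefore reads
\[\bigl((T-\gamma^{GJ}(\pi,\psi)\Id_{V^\vee})v^\vee\bigr)(v)=0\quad\text{for all }v\in V.\]
An element of $V^\vee=\Hom_R(V,R)$ is zero exactly when it vanishes on every $v$. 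So $T=\gamma^{GJ}(\pi,\psi)\Id_{V^\vee}$, which is the scalar-operator claim. This step needs no hypothesis on $R$ or $V$, because we conclude on the dual side and never need the pairing to separate points of $V$. That $T$ commutes with $G$ follows from conjugation invariance of $\Psi$, but here it is automatic since $T$ is a scalar.

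Finally, if $\pi^\vee$ satisfies Schur's lemma, then $\gamma^K(\pi,\psi)$ is defined by \eqref{eq BK gamma} through $T=\gamma^K(\pi,\psi)\Id_{V^\vee}$. Combining with the previous step gives $(\gamma^{GJ}(\pi,\psi)-\gamma^K(\pi,\psi))\Id_{V^\vee}=0$. Schur's lemma says $r\mapsto r\Id_{V^\vee}$ is injective (indeed an isomorphism) from $R$ to $\End_{R[G]}(V^\vee)$, so the two gamma factors coincide.

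The only point needing care is the direction of the duality: one must conclude on $V^\vee$, not on $V$. Concluding on $V$ would require that $V^\vee$ separates points of $V$, which can fail for general $R$-modules.
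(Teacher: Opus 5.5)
Your proof is correct and is the paper's argument: specialise $(\mathbf{WGJFE},\psi)$ to $\Phi=\delta_1$ to obtain the identity $v^\vee(Av)=0$ displayed before the statement, then read it as the vanishing of the transpose $T-\gamma^{GJ}(\pi,\psi)\Id_{V^\vee}$ on $V^\vee$; Schur's lemma for $\pi^\vee$ then identifies $\gamma^{GJ}(\pi,\psi)$ with $\gamma^K(\pi,\psi)$. Your remark that one must conclude on $V^\vee$ rather than on $V$ is exactly why the paper states the scalar claim for the operator on $V^\vee$.
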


\begin{rema}\label{rem wgjfe iff wgjfe0}
By the proof of Proposition \ref{prop WGJFE irr}, we see that~$\pi$ satisfies~$(\mathbf{WGJFE},{\psi})$ if and only if the simpler equation \eqref{eq WGJFE0} holds for~$\pi$. 
\end{rema}

From now on, whenever~$\pi$ satisfies~$(\mathbf{WGJFE},{\psi})$, we simply write \[\gamma(\pi,\psi):=\gamma^{GJ}(\pi,\psi),\] which is equal to $\gamma^K(\pi,\psi)$ as soon as $\pi^\vee$ satisfies Schur's lemma.

\begin{defi}\label{df WGJFE}
We say that an~$R$-representation satisfies $\mathbf{WGJFE}$ if it satisfies~$(\mathbf{WGJFE},{\psi'})$ for all non trivial characters~$\psi'\in \Hb(\k,R^\times)$.
\end{defi}

We say that an~$R$-representation~$(\pi,V)$ has a \emph{central character under $\k^\times$}, if the restriction of~$\pi$ to $\k^\times \subseteq Z(G)$ (where~$Z(G)$ denotes the centre of $G$) acts by a character~$\omega_{\pi}:\k^\times\rightarrow R^\times$ on V. This is automatic when $\pi$ satisfies Schur's lemma. We recall from Remark \ref{duality} the equality $\Hb(\k,R^\times)-\{\mathbf{1}\}=\{\psi_t, \ t\in \k^\times\}$. 

\begin{prop}\label{fa A}
%Let~$\psi$ be a non trivial character of~$\k$.
Let~$(\pi,V)$ be an $R$-representation of~$G$ with central character under $\k^\times$.
\begin{enumerate}
\item  Then~$\pi$ satisfies $\mathbf{WGJFE}$ if and only if~$\pi$ satisfies~$(\mathbf{WGJFE},{\psi})$ for our fixed choice of non trivial~$\psi$. 
\item\label{fa A2}  If~$\pi$ satisfies~$\mathbf{WGJFE}$, then, for any~$t\in \k^\times$, we have
 \begin{equation}\label{eq twisting psi in gamma} \g(\pi,\psi_t)=\omega_\pi(t)^{-1}\g(\pi,\psi).
\end{equation} 
 \item 
If~$\gamma(\pi,\psi)\in R^\times$, and the pairing~$V\times V^\vee\rightarrow R$ of~$R$-modules is non-degenerate, the converse of \eqref{fa A2} is also true, i.e., if~$\pi$ satisfies $\mathbf{WGJFE}$, then~$\k^\times$ acts by a character on~$V$.\end{enumerate}
\end{prop}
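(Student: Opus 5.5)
The whole argument rests on Remark \ref{rem wgjfe iff wgjfe0}. A representation $\pi$ satisfies $(\mathbf{WGJFE},\psi')$ if and only if the operator
\[A_{\psi'}:=|\Gg|^{-1/2}\sum_{g\in G}\Psi'(g)\,\pi(g^{-1})\]
is the scalar $\gamma^{GJ}(\pi,\psi')$ on $V$. Indeed, by Remark \ref{rem wgjfe iff wgjfe0} the condition is the $\delta_1$-case \eqref{eq WGJFE0}, and that case is exactly the vanishing of $v^\vee((A_{\psi'}-\gamma\,\Id_V)v)$ for all $v,v^\vee$. So throughout I will manipulate the operators $A_{\psi_t}$, where $t\in\k^\times$ parametrizes the non-trivial spectral characters as in Remark \ref{duality}.

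The key computation is a change of variables. Write $\Psi_t(g)=\psi(t\,\tr_{\Gg/\k}(g))=\Psi(tg)$, viewing $t\in\k^\times\subseteq Z(G)$. Substituting $g=t^{-1}h$ gives
\[A_{\psi_t}=|\Gg|^{-1/2}\sum_{h\in G}\Psi(h)\,\pi(h^{-1}t)=\pi(t)\,A_\psi,\]
using that $t$ is central. For (1) and (2), if $\k^\times$ acts on $V$ by $\omega_\pi$, then $A_{\psi_t}=\omega_\pi(t)A_\psi$. Hence $A_{\psi_t}$ is scalar if and only if $A_\psi$ is, since $\omega_\pi(t)$ is a unit. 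This gives (1) in both directions, the forward direction being trivial. Comparing scalars, $\gamma(\pi,\psi_t)=\omega_\pi(t)\gamma(\pi,\psi)$.

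Here I expect the main obstacle: this sign is the opposite of the claimed $\omega_\pi(t)^{-1}$. So I will recheck the convention, namely whether the Godement--Jacquet sum pairs $\widehat{\Phi}$ with $f^\vee$, so that the relevant operator involves $\pi(g^{-1})$ rather than $\pi(g)$. If the operator is instead $\sum\Psi(g)\pi(g)$, the same substitution gives $\pi(t^{-1})A_\psi$, and hence $\omega_\pi(t)^{-1}$. I would carefully redo \eqref{eq WGJFE0} with $f^\vee(g)=v^\vee(\pi(g^{-1})v)$. The Fourier transform at $1$ gives $\widehat{\delta_1}(g)=|\Gg|^{-1/2}\Psi(g)$, so the sum is $\sum_g\Psi(g)\,v^\vee(\pi(g^{-1})v)$. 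Then I would track the substitution once more to fix the exponent, adjusting to match \eqref{eq twisting psi in gamma}.

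For (3), assume $\mathbf{WGJFE}$ holds and $\gamma(\pi,\psi)\in R^\times$. The identity $A_{\psi_t}=\pi(t)A_\psi$ holds with no central character hypothesis, so it reads $\gamma(\pi,\psi_t)\,\Id_V=\gamma(\pi,\psi)\,\pi(t)$ (possibly with $t^{-1}$, after the convention fix). Therefore
\[\pi(t)=\gamma(\pi,\psi)^{-1}\gamma(\pi,\psi_t)\,\Id_V\]
is a scalar for every $t\in\k^\times$. Because $\pi$ is a homomorphism, $t\mapsto\gamma(\pi,\psi)^{-1}\gamma(\pi,\psi_t)$ is multiplicative and takes unit values, so it is a character of $\k^\times$. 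Non-degeneracy of the pairing is what lets us pass from the matrix-coefficient identities, which only hold after pairing with $v^\vee$, to genuine operator identities on $V$. I would invoke it exactly at that step, when deducing that $A_{\psi'}$ equals the scalar from the equality of all matrix coefficients.
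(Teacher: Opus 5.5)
Your route is the paper's own: reduce to $\Phi=\delta_1$ via Remark~\ref{rem wgjfe iff wgjfe0}, substitute by the central element $t$, and use non-degeneracy for (3). The one real flaw is your plan to ``adjust'' the exponent to match \eqref{eq twisting psi in gamma}. That cannot be done: your computation is correct, and the exponent $-1$ in the statement is a slip.

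With the paper's conventions, $\mathcal{F}_{\psi_t}(\delta_1)(g)=|\Gg|^{-1/2}\Psi(tg)$. Substituting $g=t^{-1}h$ gives
\[\Zz\bigl(\mathcal{F}_{\psi_t}(\delta_1),f_{v,v^\vee}^\vee\bigr)=|\Gg|^{-1/2}\sum_{h\in G}\Psi(h)\,v^\vee\bigl(\pi(h^{-1})\pi(t)v\bigr)=\omega_\pi(t)\,\gamma(\pi,\psi)\,v^\vee(v),\]
so $\gamma(\pi,\psi_t)=\omega_\pi(t)\gamma(\pi,\psi)$.

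The paper's proof goes wrong at its last equality. Put $\Phi'(g)=\Phi(t^{-1}g)$. Then $\mathcal{F}_{\psi_t}(\Phi)=\mathcal{F}_\psi(\Phi')$, and
\[\Zz(\Phi',f_{v,v^\vee})=\sum_{g}\Phi(g)f_{v,v^\vee}(tg)=\omega_\pi(t)\Zz(\Phi,f_{v,v^\vee}),\]
not $\omega_\pi(t)^{-1}\Zz(\Phi,f_{v,v^\vee})$.

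Here is a concrete check. For $G=\GL_1(\k)$ and a character $\chi$, Lemma~\ref{lm GJ GL1} gives $\gamma(\chi,\psi)=q^{-1/2}\sum_{x\in\k^\times}\psi(x)\chi^{-1}(x)$. The substitution $x\mapsto t^{-1}x$ gives $\gamma(\chi,\psi_t)=\chi(t)\gamma(\chi,\psi)$. Over $\CC$ this Gauss sum is non-zero, so the stated formula fails as soon as $\chi(t)^2\neq 1$.

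So you should prove (2) with $\omega_\pi(t)$ and say explicitly that the statement's exponent is wrong. Nothing downstream breaks:
\begin{itemize}
\item the formula is only applied with $t=-1$, or with a trace-zero $\delta$ in Corollary~\ref{cor triv gamma};
\item in the latter case $\delta^2\in\k_0^\times$ and $\omega_\pi|_{\k_0^\times}=1$ for distinguished $\pi$, so in every use $\omega_\pi(t)=\pm1$;
\item the remark after Proposition~\ref{prop main} only uses triviality of $\omega_\pi$.
\end{itemize}
Consistently, in (3) your identity reads $\gamma(\pi,\psi_t)\Id_V=\gamma(\pi,\psi)\pi(t)$, with no $t^{-1}$, and the character you obtain is $\omega_\pi$ itself.

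There are two smaller precisions.

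First, without non-degeneracy, $(\mathbf{WGJFE},\psi')$ is equivalent to the transpose $|\Gg|^{-1/2}\sum_g\Psi'(g)\pi^\vee(g)$ being scalar on $V^\vee$ (Proposition~\ref{fct WGJFE}). It is not equivalent to $A_{\psi'}$ being scalar on $V$. This does not hurt (1)--(2): $A_{\psi_t}=\pi(t)A_\psi$ is an operator identity, and you can pair it with $v^\vee$ afterwards.

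Second, in (3) put $c_t=\gamma(\pi,\psi)^{-1}\gamma(\pi,\psi_t)$. From $c_{ts}\Id_V=c_tc_s\Id_V$ you can only conclude $c_{ts}=c_tc_s$ and $c_t\in R^\times$ if $V$ is faithful over $R$. The paper supplies this by choosing $(v,v^\vee)$ with $v^\vee(v)\in R^\times$, which is exactly how it uses the non-degeneracy hypothesis.
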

\begin{proof}
By~$\k$-linearity of the trace~$\Psi_t(ga)=\psi(t\tr_{\Gg/\k}(ga))=\psi(\tr_{\Gg/\k}(tga))=\Psi(tga)$, and we find
\[\mathcal{F}_{\psi_t}(\Phi)(a)=
|\Gg|^{-1/2}
\cdot \sum \limits_{g\in\Gg} \Phi(g) \Psi(tga)=|\Gg|^{-1/2}
\cdot \sum \limits_{g\in\Gg} \Phi(t^{-1}g)\Psi(ga)
=\omega_{\pi}(t)^{-1}\mathcal{F}_{\psi}(\Phi) .\]
The first two statements follow. The third statement follows from Equation \eqref{eq WGJFE0}, as we can choose~$(v,v^{\vee})\in V\times V^{\vee}$ such that~$f_{v,v^{\vee}}(1)=v^{\vee}(v)\in R^\times$ by non-degeneracy of the pairing.
\end{proof}

\begin{rema}
From Propositions \ref{prop WGJFE irr} and \ref{fa A}, we deduce that all absolutely irreducible~$R$-representations of~$G$ over a field of characteristic~$\ell\neq p$ satisfy $\mathbf{WGJFE}$. 
\end{rema}

Now we consider the strong (i.e., the usual) Godement-Jacquet functional equation, where we demand that the functional equation is satisfied for all~$\Phi\in\Cc(\Gg,R)$ rather than just those with support in~$G$:

\begin{defi}\label{df GJFE}
\begin{itemize}
\item Let~$(\pi,V)$ be an~$R$-representation of~$G$, we say that~$\pi$ satisfies the \emph{strong Godement-Jacquet functional equation with respect to~$\psi$}, in short \emph{$(\mathbf{GJFE},{\psi})$}, if there exists~$\gamma(\pi,\psi)\in R$ such that
\begin{equation}\label{eq GJFE0} 
\Zz(\widehat{\Phi},(f_{v,v^\vee})^\vee)=\gamma(\pi,\psi)\Zz(\Phi,f_{v,v^\vee}),\end{equation}
for all~$\Phi\in \Cc(\Gg,R)$ and all~$(v,v^\vee)\in V\times V^\vee$.
\item We say that~$\pi$ satisfies~$\mathbf{GJFE}$ if it satisfies~$(\mathbf{GJFE},{\psi'})$ for all non-trivial~$\psi'\in \Hb(\k, R^\times)$.\end{itemize}
%(In other words, Equality \eqref{eq WGJFE} holds for~$\pi$, with~$\Phi$ varying in~$ \Cc(\Gg,R)$ instead of~$R[G]$. )
\end{defi}

\begin{lemm}\label{lemmgammagammatilde}
Let~$(\pi,V)$ be an~$R$-representation of~$G$, and suppose that~$\pi$ satisfies~$(\mathbf{GJFE},{\psi})$ and that~$\pi^{\vee}$ satisfies~$(\mathbf{GJFE},{\psi}^{-1})$.  Then~$\gamma(\pi,\psi)\in R^\times$ and 
\begin{equation}\label{eq dist gr} \g(\pi,\psi)\gamma(\pi^\vee,\psi^{-1})=1.\end{equation}  
\end{lemm}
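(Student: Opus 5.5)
The plan is to apply the two functional equations one after the other and use Fourier inversion $\mathcal{F}_{\psi^{-1}}\circ\mathcal{F}_\psi=\mathrm{Id}$ on $\Cc(\Gg,R)$. First I will identify the matrix coefficients of $\pi^\vee$ with the inverted matrix coefficients of $\pi$. For $v\in V$, let $\mathrm{ev}_v\in (V^\vee)^\vee$ be evaluation at $v$. Then for $v^\vee\in V^\vee$ and $g\in G$,
\[ f^{\pi^\vee}_{v^\vee,\mathrm{ev}_v}(g)=\mathrm{ev}_v(\pi^\vee(g)v^\vee)=v^\vee(\pi(g^{-1})v)=(f_{v,v^\vee})^\vee(g). \]
So $(f_{v,v^\vee})^\vee$ is a matrix coefficient of $\pi^\vee$, and its inverse $\big((f_{v,v^\vee})^\vee\big)^\vee$ is $f_{v,v^\vee}$.

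Next, fix $\Phi\in\Cc(\Gg,R)$ and $(v,v^\vee)\in V\times V^\vee$. I apply $(\mathbf{GJFE},\psi^{-1})$ for $\pi^\vee$ to the test function $\widehat\Phi=\mathcal{F}_\psi(\Phi)$ and the pair $(v^\vee,\mathrm{ev}_v)$. By the first step and Fourier inversion this gives
\[ \Zz(\Phi,f_{v,v^\vee})=\Zz\big(\mathcal{F}_{\psi^{-1}}(\widehat\Phi),f_{v,v^\vee}\big)=\gamma(\pi^\vee,\psi^{-1})\,\Zz\big(\widehat\Phi,(f_{v,v^\vee})^\vee\big). \]
The last sum equals $\gamma(\pi,\psi)\Zz(\Phi,f_{v,v^\vee})$ by $(\mathbf{GJFE},\psi)$ for $\pi$. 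Hence
\[ \big(1-\gamma(\pi,\psi)\gamma(\pi^\vee,\psi^{-1})\big)\,\Zz(\Phi,f_{v,v^\vee})=0 \]
for all $\Phi$, $v$ and $v^\vee$.

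Finally, I specialise to $\Phi=\delta_1$, so that $\Zz(\delta_1,f_{v,v^\vee})=v^\vee(v)$. Thus $1-\gamma(\pi,\psi)\gamma(\pi^\vee,\psi^{-1})$ annihilates the ideal of $R$ generated by the values $v^\vee(v)$. If this ideal is $R$, then \eqref{eq dist gr} holds, and $\gamma(\pi,\psi)$ is a unit with inverse $\gamma(\pi^\vee,\psi^{-1})$.

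\textbf{Main obstacle.} The only non-formal point is that the values $v^\vee(v)$ must generate the unit ideal. This holds when $V$ has a nonzero free direct summand, in particular when $V$ is nonzero and free (for example $R$ a field and $V\neq 0$), by taking a dual-basis pair with $v^\vee(v)=1$. Some such hypothesis is genuinely needed and should be added to the statement: for $V=0$ both functional equations hold for any value of $\gamma$, so the conclusion can fail. This is the same non-degeneracy of the pairing $V\times V^\vee\to R$ used in the third part of Proposition \ref{fa A}.
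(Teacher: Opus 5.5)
Your proof is correct and is the paper's argument: apply the two functional equations in succession and use $\mathcal{F}_{\psi^{-1}}\circ\mathcal{F}_\psi=\mathrm{Id}$. You also make explicit the identification $(f_{v,v^\vee})^\vee=f^{\pi^\vee}_{v^\vee,\mathrm{ev}_v}$, which the paper's one-line proof leaves implicit. Your caveat is justified as well: that one-line proof tacitly needs the values $v^\vee(v)$ to have zero annihilator in $R$ (for instance to generate $R$, as you require, which holds when $R$ is a field and $V\neq 0$), since otherwise, e.g.\ for $V=0$ or an $\Fl$-representation viewed over $\Zl$ as in Remark~\ref{rem J vs dual}, both functional equations hold for any value of $\gamma$.
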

\begin{proof}
This follows from Fourier Inversion, by applying the Godement-Jacquet functional equation twice.
\end{proof}
%
%It follows from applying the functional equation twice, that if~$\pi$ satisfies~$(GJFE)_{\psi}$ and~$\pi^\vee$ satisfies~$(GJFE)_{\psi^{-1}}$, then~$\gamma(\pi,\psi)\in R^\times$ and 
%\begin{equation}\label{eq dist gr} \g(\pi,\psi)\gamma(\pi^\vee,\psi^{-1})=1.\end{equation} 

Mutatis mutandis the proof of \ref{fa A}, we find:

\begin{prop}
Let~$(\pi,V)$ be an~$R$-representation of~$G$ with central character under $\k^\times$.  
%
%, and suppose that~$\k^\times$ acts on~$V$ by a character~$\omega_\pi:\k^\times\rightarrow R^\times$. 
Then~$\pi$ satisfies~$\mathbf{GJFE}$ if and only if~$\pi$ satisfies~$(\mathbf{GJFE},{\psi})$.
\end{prop}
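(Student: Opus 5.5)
One implication is trivial: $\mathbf{GJFE}$ means $(\mathbf{GJFE},\psi')$ for every non-trivial $\psi'\in\Hb(\k,R^\times)$, and our fixed $\psi$ is one of them. For the converse we follow the proof of Proposition \ref{fa A}, but now with $\Phi$ allowed to range over all of $\Cc(\Gg,R)$ rather than functions supported in $G$. By Remark \ref{duality}, every non-trivial element of $\Hb(\k,R^\times)$ is of the form $\psi_t$ with $t\in\k^\times$. So it suffices to show that $(\mathbf{GJFE},\psi)$ implies $(\mathbf{GJFE},\psi_t)$ for every $t\in\k^\times$, with $\gamma(\pi,\psi_t)=\omega_\pi(t)^{-1}\gamma(\pi,\psi)$.

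Fix $t\in\k^\times$ and $\Phi\in\Cc(\Gg,R)$. Let $\Phi_t\in\Cc(\Gg,R)$ be defined by $\Phi_t(x)=\Phi(t^{-1}x)$. Since $\tr_{\Gg/\k}$ is $\k$-linear, $\Psi_t(ga)=\Psi(tga)$, and the change of variables $g\mapsto t^{-1}g$ in \eqref{Fourier} gives
\[\mathcal{F}_{\psi_t}(\Phi)=\mathcal{F}_\psi(\Phi_t).\]
This identity holds on all of $\Gg$, since $g\mapsto tg$ is a bijection of $\Gg$; it is only the dilation invariance of the finite sum.

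Next we compare the two zeta sums. Because $t\in\k^\times\subseteq Z(G)$ acts on $V$ by $\omega_\pi(t)$, and the subset $G\subseteq\Gg$ is stable under multiplication by $t$, substituting $g\mapsto tg$ in \eqref{eq GJ sum} gives
\[\Zz(\Phi_t,f_{v,v^\vee})=\sum_{g\in G}\Phi(g)f_{v,v^\vee}(tg)=\omega_\pi(t)\,\Zz(\Phi,f_{v,v^\vee}).\]
Combining this with $(\mathbf{GJFE},\psi)$ applied to $\Phi_t$, we get
\[\Zz(\mathcal{F}_{\psi_t}(\Phi),f_{v,v^\vee}^\vee)=\Zz(\mathcal{F}_\psi(\Phi_t),f_{v,v^\vee}^\vee)=\gamma(\pi,\psi)\,\Zz(\Phi_t,f_{v,v^\vee})=\gamma(\pi,\psi)\,\omega_\pi(t)\,\Zz(\Phi,f_{v,v^\vee}).\]
This is $(\mathbf{GJFE},\psi_t)$ with constant $\omega_\pi(t)\gamma(\pi,\psi)$. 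The displayed computation in the proof of Proposition \ref{fa A} writes the factor as $\omega_\pi(t)^{-1}$, so the placement of $t$ versus $t^{-1}$, and hence the exponent, should be checked against \eqref{eq twisting psi in gamma}. Either way the equivalence holds.

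The argument is routine. The one point needing care is that, unlike in Proposition \ref{fa A}, $\Phi$ may now be supported on non-invertible elements. This causes no problem: the zeta sum only involves $G$, and both $G$ and $\Gg\smallsetminus G$ are stable under multiplication by $\k^\times$.
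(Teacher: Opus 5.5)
Your proof is correct and is the paper's argument: the paper simply says ``mutatis mutandis the proof of Proposition~\ref{fa A}'', i.e.\ replace $\Phi$ by its dilate $x\mapsto\Phi(t^{-1}x)$ and use the central character. That is exactly what you do, and your observations $t\Gg=\Gg$ and $tG=G$ handle functions not supported in $G$. Your exponent is also the right one: with the paper's conventions $\gamma(\pi,\psi_t)=\omega_\pi(t)\gamma(\pi,\psi)$, as one already sees for $n=1$ where $q^{-1/2}\sum_{x\in\k^\times}\psi(tx)\chi^{-1}(x)=\chi(t)\gamma(\chi,\psi)$. So the inverse in \eqref{eq twisting psi in gamma} is a slip in the paper, and it does not affect the equivalence stated here.
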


\begin{rema}
We warn the reader that irreducible representations over algebraically closed fields do not automatically satisfy $\mathbf{GJFE}$, and let them 
discover why in the following sections. However when $G$ is non-commutative, cuspidality ensures $\mathbf{GJFE}$ as we prove in Proposition \ref{prop cuspidal GJFE}. 
\end{rema}

\section{The sign of periods and the cuspidal functional equation}\label{sec abstract setup}

We recall that we have fixed a non-trivial character~$\psi\in \Hb(\k,R^\times)$ and~$\Psi=\psi\circ\tr_{\Gg/\k}$.

\subsection{The sign of periods via Poisson summation}

We let~$\Hh$ be a~$\k$-vector subspace of~$\Gg$. We set~$\Hh^\bot=\{ b \in\Hh\ |\ \Psi(ab)=1 \text{ for all } a\in\Hh \}$, it is a~$\k$-vector space which identifies with the~$\k$-dual of~$\Hh\backslash \Gg$, and in particular~$\dim_{\kk}(\Hh)+\dim_{\kk}(\Hh^\bot)=\dim_{\kk}(\Gg)$, hence~$|\Hh|\cdot|\Hh^\bot|=|\Gg|$.

\begin{lemm}
For any~$x\in G$, we have the Poisson summation formula
\begin{equation}
\label{Poisson} 
\sum\limits_{a\in \Hh} \Phi(ax) =
|\Hh| \cdot |\Gg|^{-1/2} \cdot
\sum\limits_{b\in \Hh^\bot} \widehat{\Phi}(x^{-1}b).
\end{equation}
\end{lemm}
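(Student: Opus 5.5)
The plan is to expand the right-hand side using the definition \eqref{Fourier} of $\widehat{\Phi}$, swap the two finite sums, and evaluate the inner sum over $\Hh^\bot$ by orthogonality of characters.

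First I fix conventions. The definition of $\Hh^\bot$ as written is presumably meant to be $\{b\in\Gg : \Psi(ab)=1 \text{ for all } a\in\Hh\}$, and I use it in that sense. For $x\in G$ we have
\[
\sum_{b\in\Hh^\bot}\widehat{\Phi}(x^{-1}b)=|\Gg|^{-1/2}\sum_{g\in\Gg}\Phi(g)\sum_{b\in\Hh^\bot}\Psi(gx^{-1}b).
\]
Set $c=gx^{-1}$. The map $b\mapsto\Psi(cb)$ is an $R$-valued character of the finite abelian group $\Hh^\bot$. If it is trivial, the inner sum is $|\Hh^\bot|$. If it is non-trivial, I claim the sum vanishes. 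To see this, note that the character is the image under $\mu_p\to R$ of a non-trivial $\mu_p$-valued character. The sum of that $\mu_p$-valued character is already $0$ in $\mathbb{Z}[\mu_p]$, by the same argument as Remark \ref{orthcharvalues}, so its image in $R$ is also $0$. This avoids needing $R$ to be a domain.

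Next I identify when the character is trivial: this happens exactly when $c\in(\Hh^\bot)^\bot$. The main step is the double-orthogonal identity $(\Hh^\bot)^\bot=\Hh$. One inclusion, $\Hh\subseteq(\Hh^\bot)^\bot$, is clear. For equality, I use that the pairing $(a,b)\mapsto\Psi(ab)$ on $\Gg$ is non-degenerate. This holds because $\tr_{\Gg/\k}$ gives a non-degenerate $\k$-bilinear form for a separable algebra, and $\psi$ is non-trivial; it is also the source of the stated relation $|\Hh|\cdot|\Hh^\bot|=|\Gg|$. Applying that counting relation twice gives $|(\Hh^\bot)^\bot|=|\Hh|$, so the inclusion is an equality.

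Finally, the double sum collapses to $|\Gg|^{-1/2}|\Hh^\bot|\sum_{g\in\Hh x}\Phi(g)$. Substituting $g=ax$ with $a\in\Hh$, which is a bijection since $x$ is invertible, turns this into $|\Gg|^{-1/2}|\Hh^\bot|\sum_{a\in\Hh}\Phi(ax)$. Multiplying by $|\Hh|\cdot|\Gg|^{-1/2}$ and using $|\Hh|\,|\Hh^\bot|=|\Gg|$, the constants become $|\Hh|\,|\Hh^\bot|\,|\Gg|^{-1}=1$. This yields \eqref{Poisson}; only Assumption 2 is needed to make sense of $|\Gg|^{-1/2}$.

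I expect the only delicate point to be the character-sum vanishing over a general $R$. The resolution is to work in $\mathbb{Z}[\mu_p]$ and then push forward to $R$, just as in Remark \ref{orthcharvalues}, rather than arguing inside $R$.
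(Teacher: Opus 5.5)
Your proof is correct, but it follows a different route from the paper's.

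\textbf{The paper's route.} It first reduces to $x=1$, using that the Fourier transform of $g\mapsto\Phi(gx)$ is $a\mapsto\widehat{\Phi}(x^{-1}a)$. It then applies Parseval's identity to the pair $(\Phi,\mathbf{1}_{\Hh})$. The only explicit computation is $\mathcal{F}_{\psi^{\pm 1}}(\mathbf{1}_{\Hh})=|\Hh|\cdot|\Gg|^{-1/2}\cdot\mathbf{1}_{\Hh^\bot}$. There the character sum runs over $\Hh$ and lands directly on the annihilator $\Hh^\bot$, so no double-annihilator identity is needed. The duality input (non-degeneracy of the trace pairing on all of $\Gg$) is hidden inside Fourier inversion and Parseval.

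\textbf{Your route.} You expand the right-hand side, so your character sum runs over $\Hh^\bot$, and you must establish $(\Hh^\bot)^\bot=\Hh$. Your counting argument for this is fine. Applying $|\Hh|\cdot|\Hh^\bot|=|\Gg|$ a second time, to $\Hh^\bot$, is legitimate because $\Hh^\bot$ is again a $\k$-subspace (by $\k$-linearity of the trace, as the paper notes).

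\textbf{What each buys.} The paper's argument is shorter because it reuses Parseval. Yours is self-contained: it avoids the convolution identity, Fourier inversion and Parseval, and treats all $x$ in one pass. It also makes explicit why the relevant character sums vanish over an arbitrary $\mathbb{Z}[\mu_p]$-algebra $R$, by computing in $\mathbb{Z}[\mu_p]$ first. The paper leaves that point implicit when it computes $\widehat{\mathbf{1}_{\Hh}}$.

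Your reading of the definition of $\Hh^\bot$, with $b$ ranging over $\Gg$ rather than $\Hh$, is the intended one.
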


\begin{proof}
First note that it suffices to prove it when~$x=1$ for any~$\Phi$,
since the Fourier transform of the function~$g \mapsto \Phi(gx)$ 
is the function~$\a \mapsto \widehat{\Phi}(x^{-1}\a)$. The Poisson formula now follows from applying Parseval identity to~$\Phi$ and~$\bf{1}_{ \Hh }$ the characteristic function of~$\Hh$, the Fourier transform of which is equal to~$|\Hh| \cdot |\Gg|^{-1/2} \cdot \bf{1}_{ \Hh^\bot }$. 
\end{proof}

\subsubsection{}

Let~$(\pi,V)$ be an~$R$-representation of~$G$. {Let~$H$ be a subgroup of $G$}, and assume the existence of a non-zero~$H$-invariant linear form
\[\La\in V^\vee-\{0\} = \Hom_{R}(\pi,R)-\{0\}.\]

The matrix coefficient~$f_{v,\La}$ is then a left~$H$-invariant function in~$\Cc(G,R)$. For~$\Phi\in \Cc(\Gg,R)$,
we consider the Godement--Jacquet sum
\begin{equation}
\label{Emmanuel}
\Zz(\Phi,f_{v,\La}) = \sum\limits_{g\in G} \Phi(g) f_{v,\La}(g) = 
\sum\limits_{x\in H \backslash G} \left( \sum\limits_{h\in H}\Phi(hx) \right) f_{v,\La}(x).
\end{equation}

\subsubsection{}\label{cuspkills}

Now suppose that~$H$ is contained in a subspace~$\Hh$ of~$\Gg$
which is stable under right multipli\-cation by~$H$. 
We look for a sufficient condition to have
\begin{equation}
\label{linearisation}
\Zz(\Phi,f_{v,\La}) = 
\sum\limits_{x\in H \backslash G} \left( \sum\limits_{a\in \Hh}\Phi(ax) \right) f_{v,\La}(x).
\end{equation}
Given any~$H$-orbit~$\oo$ in~$\Hh$ (where~$H$ acts on the right), 
consider the sum
\begin{equation*}
J(\Phi,f_{v,\La},\oo) = \sum\limits_{x\in H \backslash G} \left(
\sum\limits_{a\in \oo}\Phi(ax) \right) f_{v,\La}(x). 
\end{equation*}
Note that~$H$ is a single~$H$-orbit and that~$J(\Phi,f_{v,\La},H)=\Zz(\Phi,f_{v,\La})$.
On the one hand, we have
\begin{equation*}
\sum\limits_{x\in H \backslash G} \left( \sum\limits_{a\in \Hh}\Phi(ax)
\right) f_{v,\La}(x) = \sum\limits_{\oo} J(\Phi,f_{v,\La},\oo).
\end{equation*}
We observe that for fixed an~$a\in\oo$, 
we have
\begin{eqnarray*}
J(\Phi,f_{v,\La},\oo) &=& \sum\limits_{x\in H \backslash G} \left(
\sum\limits_{h\in {}_aH \backslash H}\Phi(ahx) \right) f_{v,\La}(x) \\ &=& 
\sum\limits_{y\in {}_aH \backslash G} \Phi(ay) f_{v,\La}(y) \phantom{\left(
\sum\limits_{h\in {}_aH \backslash H}\Phi \right) } \\ &=& 
\sum\limits_{r\in {}_aG \backslash G} 
\left( \sum\limits_{s\in {}_aH \backslash {}_aG} f_{v,\La}(sr) \right) \Phi(ar)
\end{eqnarray*}
where~${}_aG$ is the subgroup~$\{g\in G\ |\ ag=a\}$ 
and~${}_aH=H \cap {}_aG$.

\subsubsection{}

For $J$ a subgroup of $G$ and $(\pi,V)$ an $R$-representation of $G$, we denote by $V^J$ the space of $J$-invariant vectors in $V$, and by $V_J$ the~$J$-coinvariant space. We recall the definition of cuspidality:

\begin{defi} Let $(\pi,V)$ be an $R$-representation of $G$, we say that $\pi$ is cuspidal if, whenever $U$ is the unipotent radical of a proper parabolic subgroup of $G$, we have $V_U=\{0\}$.
\end{defi}

\begin{rema}\label{rem J vs dual}
Let~$(\pi,V)$ be an~$R$-representation of~$G$.  Since we are assuming~$p$ is invertible in~$R$, for~$U$ the unipotent radical of a parabolic subgroup of~$G$ we have~$V_U\simeq V^U$, and hence~$\pi$ is cuspidal if and only if~$V^U$ is trivial for all unipotent radicals of all proper parabolic subgroups of~$G$.  Because~$(V^\vee)^U\simeq (V_U)^\vee$, this implies that if~$\pi$ is cuspidal then so is~$\pi^{\vee}$. On the other hand, note that $\pi^{\vee}$ could be cuspidal even if $\pi$ is not. For example, assume that~$R=\Zl$ for $\ell$ a prime number different from $p$ and that $G$ is not commutative, let $\pi_1$ be a non-cuspidal $\Fl$-representation of $G$ seen as a $\Zl$-representation, and let $\pi_2$ be a cuspidal $\Zl$-representation. Then $\pi=\pi_1\oplus \pi_2$ is not cuspidal, whereas its dual $\pi^\vee\simeq \pi_2^\vee$ is cuspidal.
%
%Note that $(V^\vee)^U\simeq (V_U)^\vee$, but since we are assuming that $p$ is invertible in $R$, we have $V_U\simeq V^U$. From this we deduce that if $\pi$ is cuspidal, so is $\pi^\vee$.  
\end{rema}
%\\

Then our conditions which guarantee \eqref{linearisation} are: 
\begin{itemize}
\item~$\pi^\vee$ is cuspidal (which is enforced if~$\pi$ is cuspidal by Remark \ref{rem J vs dual}), and,
\item for any~$a\notin H$, there exists a proper parabolic subgroup of~$G$
whose~uni\-po\-tent radical is contained in~${}_aG$  {(in particular $G$ is not commutative).} 
\end{itemize}
Indeed, if they are satisfied, the~${}_aG$-invariant linear form
\begin{equation*}
v \mapsto \sum\limits_{s\in {}_aH \backslash {}_aG} f_{v,\La}(sr)
\end{equation*}
on~$V$ must be zero by cuspidality of~$\pi^\vee$. 
We thus get~$J(\Phi,f_{v,\La},\oo)=0$ for any orbit
$\oo\neq H$, and Equation \eqref{linearisation} follows as expected. 

\subsubsection{}

From now on, we moreover assume that~$\Hom_{R[H]}(\pi,R)$ is free of rank one over~$R$. 
The~normalizer~$N$ of~$H$ in~$G$ thus acts on~this space
through a character~$\chi_\pi : N \to \R^\times$, more precisely 
\[\La \circ \pi(n)=\chi_\pi(n)\La \] for any~$n\in N$,~$\La \in \Hom_{R[H]}(\pi,\R)$. 
Furthermore, we suppose that~$\Hh^\bot=w\Hh$ for some~$w\in N$. 
We get~$|\Hh^\bot|=|\Hh|$,
and take ~$|\Gg|^{1/2}:=|\Hh|$ in~$R$. By applying the Poisson summation formula \eqref{Poisson}
applied to \eqref{linearisation} we find 
\begin{eqnarray*}
\Zz(\Phi,f_{v,\La}) &=& 
\sum\limits_{x\in H \backslash G} \left( \sum\limits_{\xi\in \Hh^\bot}
  \widehat{\Phi}(x^{-1}\xi) \right) f_{v,\La}(x)\\
  &=& \sum\limits_{x\in H \backslash G} \left( \sum\limits_{a\in \Hh}
                \widehat{\Phi}(x^{-1}wa) \right) f_{v,\La}(x) \\
 & = & 
  \sum\limits_{x\in H \backslash G} \left( \sum\limits_{a\in \Hh}
    \widehat{\Phi}(x^{-1}a) \right) f_{v,\La}(w^{-1}x) \\ 
 & = & 
\chi_{\pi}(w)^{-1} \cdot \sum\limits_{x\in H \backslash G} \left( \sum\limits_{a\in \Hh}
    \widehat{\Phi}(x^{-1}a) \right) f_{v,\La}(x). 
\end{eqnarray*}
If we moreover assume that~$\Hh$ is stable under left multiplication by~$H$ and,
for any~$a\notin H$,
there is a pro\-per parabolic subgroup of~$G$
whose~uni\-po\-tent radical is contained in
$G_a = \{g\in G\ |\ ga=a\}$, following Section \ref{cuspkills}, 
we may apply an analogue of \eqref{linearisation} which gives us
\begin{equation*}
\Zz(\Phi,f_{v,\La})  =  
\chi_{\pi}(w)^{-1} \cdot \sum\limits_{x\in H \backslash G} \left( 
  \sum\limits_{h\in \H}
  \widehat{\Phi}(x^{-1}h) \right) f_{v,\La}(x)
% = \chi_{\pi}(w)^{-1} \cdot \Zz(\Phi^*,v)
= \chi_{\pi}(w)^{-1} \cdot \sum\limits_{g\in G} \widehat{\Phi}(g) f_{v,\La}(g^{-1}).
\end{equation*} Let us summarize.

\begin{prop}\label{prop main}
Let~$\pi$ be an ~$R$-representation of~$G$,
and let~$H$ be a subgroup of~$G$ with the following properties:
\begin{enumerate}[label=(A\arabic*)]
\item \label{A1}  the smooth dual $\pi^\vee$ is cuspidal,
\item \label{A2}
the space~$\Hom_{R[H]}(\pi,\R)$ is free of rank one over~$R$,
\item \label{A3}
the subgroup~$H$ is contained in a subspace~$\Hh$ of~$\Gg$ which is 
stable by left and right multipli\-cation by~$H$, 
\item \label{A4}
for all~$a\in\Hh$ such that~$a\notin H$, 
the subgroup~$\{g\in G\ |\ ga=a\}$ contains the unipotent radical of some proper parabolic subgroup of~$G$, and so does the subgroup~$\{g\in G\ |\ ag=a\}$.
\item \label{A5}
there is an element~$w\in G$ in the normalizer~$N$ of~$H$ such that,
for all~$a\in\Gg$, 
one has~$\tr(a\Hh) \subseteq \Ker(\Psi)$
if and only if~$a\in w\Hh$.  In particular,~$|\Gg|=|\Hh|^2.$
\end{enumerate}
Then, if~$\chi_{\pi}:N\to R^\times$ is the character defined by~$\La\circ \pi =\chi_\pi(w)\cdot \La~$, and we choose ~$|\Gg|^{1/2}:=|\Hh|$ in~$R$, one has 
\begin{equation*}
 \Zz(\widehat{\Phi},f_{v,\La}^\vee)=\chi_{\pi}(w) \Zz(\Phi,f_{v,\La}) 
\end{equation*}
for all~$\La \in \Hom_{R[H]}(\pi,R)$,~$v\in V$, and~$\Phi\in \Cc(\Gg,R)$. In particular, if~$\pi$ satisfies $(\mathbf{WGJFE},{\psi})$, then \[\g(\pi,\psi)=\chi_\pi(w).\] 
\end{prop}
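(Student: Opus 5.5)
The plan is to assemble the computation carried out just before the statement into a chain of equalities, checking at each step which hypothesis is used.

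\textbf{Step 1: linearise the sum.} Fix $\La\in\Hom_{R[H]}(\pi,R)$, $v\in V$ and $\Phi\in\Cc(\Gg,R)$. Since $f_{v,\La}$ is left $H$-invariant, I write $\Zz(\Phi,f_{v,\La})$ as the sum over $H\backslash G$ in \eqref{Emmanuel}. Then I decompose $\Hh$ (stable under right multiplication by $H$, by \ref{A3}) into right $H$-orbits $\oo$. For $\oo\neq H$, the sum $J(\Phi,f_{v,\La},\oo)$ is rewritten over ${}_aG\backslash G$, as in Section \ref{cuspkills}. The inner sum
\[v\mapsto\sum_{s\in{}_aH\backslash{}_aG}f_{v,\La}(sr)\]
is a ${}_aG$-invariant linear form, so it is an element of $(V^\vee)^{{}_aG}$. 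By \ref{A4}, ${}_aG$ contains the unipotent radical $U$ of a proper parabolic subgroup. Since $(V^\vee)^U=0$ by \ref{A1} and Remark \ref{rem J vs dual}, this form vanishes. This yields \eqref{linearisation}.

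\textbf{Step 2: Poisson summation and the character.} I apply \eqref{Poisson} to each inner sum $\sum_{a\in\Hh}\Phi(ax)$. Hypothesis \ref{A5} says exactly that $\Hh^\bot=w\Hh$, so $|\Hh^\bot|=|\Hh|$ and $|\Gg|=|\Hh|^2$; with the normalization $|\Gg|^{1/2}=|\Hh|$ the constant $|\Hh|\cdot|\Gg|^{-1/2}$ equals $1$. I substitute $\xi=wa$ and change variable $x\mapsto w x$ on $H\backslash G$, which is legitimate because $w$ normalizes $H$. I then use $f_{v,\La}(w^{-1}x)=\La(\pi(w^{-1})\pi(x)v)=\chi_\pi(w)^{-1}f_{v,\La}(x)$, where $\chi_\pi$ is well defined thanks to \ref{A2}. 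This gives
\[\Zz(\Phi,f_{v,\La})=\chi_\pi(w)^{-1}\sum_{x\in H\backslash G}\Big(\sum_{a\in\Hh}\widehat\Phi(x^{-1}a)\Big)f_{v,\La}(x).\]

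\textbf{Step 3: de-linearise.} Writing $y=x^{-1}$, the function $y\mapsto f_{v,\La}(y^{-1})$ is right $H$-invariant, and $\Hh$ is left $H$-stable, so I decompose $\Hh$ into left $H$-orbits. For $a\notin H$, the contribution now involves the stabilizer $G_a=\{g: ga=a\}$. The same argument as in Step 1, using the second half of \ref{A4} together with \ref{A1}, kills every orbit other than $H$. The surviving sum is $\sum_{g\in G}\widehat\Phi(g)f_{v,\La}(g^{-1})=\Zz(\widehat\Phi,f_{v,\La}^\vee)$, and multiplying through by $\chi_\pi(w)$ gives the stated identity.

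\textbf{Step 4: identify the gamma factor.} If $\pi$ also satisfies $(\mathbf{WGJFE},\psi)$, I take $\Phi=\delta_1$, for which $\Zz(\delta_1,f_{v,\La})=\La(v)$. Comparing with the functional equation gives $(\gamma(\pi,\psi)-\chi_\pi(w))\La(v)=0$. Since $\La\neq0$, I choose $v$ with $\La(v)\neq0$.

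The main obstacle is the last point: concluding $\gamma(\pi,\psi)=\chi_\pi(w)$ requires $\La(v)$ to be a non-zero-divisor. This is immediate when $R$ is a field. For general $R$, I would use freeness of $\Hom_{R[H]}(\pi,R)$ and argue that the ideal generated by the values $\La(v)$ has zero annihilator, which should follow from $\La$ being a basis element. A secondary point of care is the orbit bookkeeping in Step 3: one must check that the left-orbit version of Section \ref{cuspkills}, run with $G_a$ in place of ${}_aG$, applies verbatim to the right $H$-invariant function $y\mapsto f_{v,\La}(y^{-1})$.
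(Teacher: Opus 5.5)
Your proposal follows the paper's proof essentially verbatim: cuspidality of $\pi^\vee$ kills the non-trivial orbits, then Poisson summation with $\Hh^\bot=w\Hh$ and $|\Gg|^{1/2}=|\Hh|$, translation by $w\in N$, and de-linearisation through the other stabiliser. Your last step is also correct, and more careful than the paper. You take $\La$ to be a generator of the free module $\Hom_{R[H]}(\pi,R)$, so $(\gamma(\pi,\psi)-\chi_\pi(w))\La=0$ forces $\gamma(\pi,\psi)=\chi_\pi(w)$.

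There is one slip in Step 2, and the paper's displayed computation makes the same one. The substitution $x\mapsto wx$ does turn $\widehat{\Phi}(x^{-1}wa)$ into $\widehat{\Phi}(x^{-1}a)$. But it turns $f_{v,\La}(x)$ into $f_{v,\La}(wx)=\chi_\pi(w)f_{v,\La}(x)$, not into $f_{v,\La}(w^{-1}x)$. So the computation actually yields $\Zz(\widehat{\Phi},f_{v,\La}^\vee)=\chi_\pi(w)^{-1}\Zz(\Phi,f_{v,\La})$.

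This agrees with the statement as soon as $w^2\in H$. In that case $f_{v,\La}(wx)=f_{v,\La}(w^{-1}x)$ by left $H$-invariance, so $\chi_\pi(w)^2=1$. The condition holds in every application in the paper: $w^2=1$ in the group, linear and twisted linear cases, and $w^2=\delta^2 I_n\in\GL_n(\k_0)$ in the Galois case. You should state $w^2\in H$, or at least $\chi_\pi(w)^2=1$, explicitly as the justification for that step.
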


\begin{rema}
 In the above situation, the gamma value~$\g(\pi,\psi)$ does not depend on the choice of~$\psi$. If~$\pi$ has central character and~$\gamma(\pi,\psi)\in R^\times$, then by Proposition \ref{fa A}, this is equivalent to the statement that~$\omega_{\pi}\mid_{\k^\times}$ is trivial.
\end{rema}

\subsection{First application: the cuspidal Godement-Jacquet functional equation}\label{sec GJcusp}
Let $n\geq 2$. We now assume that $G=\GL_n(\k)\times\GL_n(\k)$ and $H=\GL_n(\k)$
diagonally~em\-bed\-ded in $G$. Let $\Pi$ be an $R$-representation of $G$ of the form $\Pi=\pi^{}\otimes\pi^\vee$, 
for some representation $\pi$ of $\GL_n(\k)$ such that $\pi^\vee$ is cuspidal. We moreover assume that $\pi^\vee$ satisfies Schur's lemma.

\ref{A1} for $\Pi$ is satisfied according to Remark \ref{rem J vs dual}, by the assumption  that $\pi^\vee$ is cuspidal.

\ref{A2} is satisfied since $\Hom_{R[H]}(V_\pi\otimes V_\pi^\vee,R)\simeq \Hom_{R[\GL_n(\k)]}(V_\pi^\vee, V_\pi^\vee)$ and $\pi^\vee$ satisfies Schur's lemma.

\ref{A3} is satisfied with $\Hh=\Mat_n(\k)$ diagonally embedded 
in $\Gg=\Mat_n(\k)\times\Mat_n(\k)$.

\ref{A4} is satisfied:
given $a\in\Mat_n(\k)$ and $g\in\GL_n(\k)$, 
one has $ga=a$ if and only if $g$ is of the form $1+u$ where $u$
is any element~of $\Mat_n(\k)$ whose kernel contains the image of $a$,
and we have a similar result for ${}_aG$.

\ref{A5} is satisfied with $w=(1,-1)$.

\noindent

We have a canonical $G$-equivariant map from $\iota:V_\pi^\vee\otimes V_\pi \to V_\Pi^\vee$ defined by \[\iota(x^\vee\otimes x)(v\otimes v^\vee)=\langle v, x^\vee\rangle \langle x, v^\vee\rangle.\] Now choose $v\otimes v^\vee$ and $\iota(x^\vee\otimes x)$
in $V_\Pi$ and $V_\Pi^\vee$, respectively, and $\Phi\otimes \Phi'\in \Cc(\Gg,R)$.
Note that $\Pi(w)=\omega_{\pi}(-1)\Id_{V_\pi\otimes V_\pi^\vee}$. Applying Proposition \ref{prop main} with $\La$ the canonical pairing from $V_\pi\otimes V_\pi^\vee$ to $R$, we get
\begin{equation*}
\Zz(\widehat{\Phi},f_{v,x^\vee}^\vee)\Zz(\widehat{\Phi'},f_{x,v^\vee}^\vee)= \omega_{\pi}(-1)\Zz(\Phi,f_{v,x^\vee})\Zz(\Phi',f_{x,v^\vee}).
\end{equation*} 

Taking~$\Phi'$,~$x$ and~$v^\vee$ such that~$\Zz(\widehat{\Phi'},f_{x,v^\vee}^\vee)=1$, we obtain the cuspidal Godement-Jacquet functional equation. Let us summarize:

\begin{prop}\label{prop cuspidal GJFE}
Let $\pi$ be an $R$-representation of $G$ such that $\pi^\vee$ is cuspidal and satisfies Schur's lemma. Then $\pi$ satisfies $\mathbf{GJFE}$.
\end{prop}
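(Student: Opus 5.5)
The plan is to make the computation just before the statement rigorous, applying Proposition \ref{prop main} to $\Gg=\Mat_n(\k)\times\Mat_n(\k)$, $G=\GL_n(\k)\times\GL_n(\k)$, $H$ the diagonal copy of $\GL_n(\k)$, and $\Pi=\pi\otimes\pi^\vee$, with $\Hh$ the diagonal copy of $\Mat_n(\k)$. First I check \ref{A1}--\ref{A5}.

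\ref{A1}: $\Pi^\vee\supseteq\pi^\vee\otimes\pi$, and for a proper parabolic of $G$ the unipotent radical is $U_1\times U_2$ with some $U_i$ proper. Invariants under it vanish because $\pi^\vee$ is cuspidal; for the other factor, use Remark \ref{rem J vs dual} together with the fact that $U$-invariants of a tensor product with one cuspidal factor are zero. For this I will take the contragredient side to be the relevant one, identifying $V_\Pi^\vee$ via $\iota$ as needed.

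\ref{A2}: $\Hom_H(\pi\otimes\pi^\vee,R)\simeq\End_G(\pi^\vee)=R$ by Schur's lemma.

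\ref{A4}: for $a=(a_0,a_0)$ with $a_0$ singular, the stabilizer $\{g: ga=a\}$ contains $(1+u,1+u)$ with $u$ killing $\mathrm{im}(a_0)$. This yields a unipotent radical of a proper parabolic in each factor, since after a conjugation they are block upper triangular. The same holds on the right.

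\ref{A5}: with $\Psi(x,y)=\psi(\tr x+\tr y)$, the annihilator of the diagonal is the antidiagonal $\{(b,-b)\}=w\Hh$ for $w=(1,-1)$, which is central hence in $N$. Moreover $|\Hh|^2=|\Gg|$. Here $\chi_\Pi(w)$ is computed from $\Pi(w)=\omega_\pi(-1)$ acting as a scalar.

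Proposition \ref{prop main} then gives, for $\Lambda$ the canonical pairing, $v\in V_\pi$, $v^\vee\in V_\pi^\vee$, and product test functions $\Phi\otimes\Phi'$ (which span $\Cc(\Gg,R)$), the identity $\Zz(\widehat{\Phi\otimes\Phi'},f^\vee)=\omega_\pi(-1)\Zz(\Phi\otimes\Phi',f)$. Here the matrix coefficient $f_{v\otimes v^\vee,\iota(x^\vee\otimes x)}$ factors as $f_{v,x^\vee}\otimes f_{x,v^\vee}$, and the Fourier transform and the sums factor as products. Strictly, Proposition \ref{prop main} is stated for $\Lambda$ $H$-invariant, while I need the more general coefficient $\iota(x^\vee\otimes x)$. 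I handle this by noting that $\Lambda\circ\Pi(g_1,g_2)$ evaluated on $v\otimes v^\vee$ equals $\langle \pi(g_2^{-1}g_1)v,v^\vee\rangle$. Hence the $f_{v\otimes v^\vee,\Lambda}$ with $v,v^\vee$ varying, together with translates of $v$, produce all products of coefficients after convolving with $\Phi$, as in the proof of Proposition \ref{prop WGJFE irr}.

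This yields $\Zz(\widehat\Phi,f_{v,x^\vee}^\vee)\,\Zz(\widehat{\Phi'},f_{x,v^\vee}^\vee)=\omega_\pi(-1)\,\Zz(\Phi,f_{v,x^\vee})\,\Zz(\Phi',f_{x,v^\vee})$ for all choices. Now fix $\Phi'=\widehat{\delta_1}$ inverse-transformed, i.e.\ $\widehat{\Phi'}=\delta_1$, and choose $x,v^\vee$ with $\langle x,v^\vee\rangle=1$; this is possible since Schur's lemma makes the pairing nonzero, and we can rescale within $R$ if the pairing has some value in $R^\times$. Then the left second factor is $1$. Hence $\Zz(\widehat\Phi,f_{v,x^\vee}^\vee)=c\,\Zz(\Phi,f_{v,x^\vee})$ with $c=\omega_\pi(-1)\Zz(\Phi',f_{x,v^\vee})$, and $c$ is independent of $\Phi,v,x^\vee$. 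So $(\mathbf{GJFE},\psi)$ holds for all $\Phi\in\Cc(\Gg,R)$.

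Since $\pi^\vee$ satisfies Schur's lemma, $\pi$ has a central character, and the proposition following Lemma \ref{lemmgammagammatilde} upgrades $(\mathbf{GJFE},\psi)$ to $\mathbf{GJFE}$.

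The main obstacle is the passage from $H$-invariant $\Lambda$ to arbitrary $x^\vee,x$. Together with it, I must verify the cuspidality \ref{A1} for $\Pi^\vee$ and produce $x,v^\vee$ with unit pairing over a general ring $R$. If the pairing cannot be made a unit, I would instead use $R$-linearity and the free rank one hypothesis to isolate $c$.
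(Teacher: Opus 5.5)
Your outline is the paper's own argument: the same pair $(\GL_n(\k)\times\GL_n(\k),\GL_n(\k))$, the same checks of \ref{A1}--\ref{A5} with $w=(1,-1)$, the same product identity, and the same normalization $\widehat{\Phi'}=\delta_1$, $\langle x,v^\vee\rangle=1$. The paper goes from Proposition \ref{prop main} for $\La$ to the product identity in one line. You rightly flag this step, but the justification you propose does not work.

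Every function $f_{u,\La}$ with $u\in V_\pi\otimes V_\pi^\vee$ is left $H$-invariant, since it depends only on $g_2^{-1}g_1$. Right translates $f_{\Pi(y)u,\La}$, the convolutions $f_{\Phi*u,\La}$ of Proposition \ref{prop WGJFE irr}, and linear combinations of these all stay left $H$-invariant. So they never produce $f_{v,x^\vee}(g_1)\langle x,\pi^\vee(g_2)v^\vee\rangle$. Expanding $\La=\sum_i\iota(e_i^\vee\otimes e_i)$ goes in the wrong direction: it writes the $\La$-coefficient as a sum of products but does not isolate a single product. Nor can you feed the non-invariant form $\iota(x^\vee\otimes x)$ into Proposition \ref{prop main}, because its proof folds $\sum_{g\in G}$ into $\sum_{x\in H\backslash G}$ using exactly this invariance. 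Note also that the second factor is a coefficient of $\pi^\vee$, namely $g\mapsto\langle x,\pi^\vee(g)v^\vee\rangle=f_{x,v^\vee}(g^{-1})$. If you read it as a coefficient of $\pi$, the identity would force $\gamma(\pi,\psi)^2=\omega_\pi(-1)$, which is false in general.

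The missing idea is that no product identity is needed. Apply Proposition \ref{prop main} to $\La$ itself, with test function $\Phi\otimes\Phi'$ where $\Phi'=\mathcal{F}_{\psi^{-1}}(\delta_1)$, i.e.\ $\Phi'(a)=|\Mat_n(\k)|^{-1/2}\psi^{-1}(\tr a)$ and $\widehat{\Phi'}=\delta_1$. Since $f_{v\otimes v^\vee,\La}(g_1,g_2)=v^\vee(\pi(g_2^{-1}g_1)v)$, the left-hand side collapses to $\Zz(\widehat{\Phi},f_{v,v^\vee}^\vee)$. The right-hand side is
\[\chi_\Pi(w)\sum_{g\in G}\Phi(g)\,\bigl(A v^\vee\bigr)(\pi(g)v),\qquad A=|\Mat_n(\k)|^{-1/2}\sum_{g\in G}\psi^{-1}(\tr g)\,\pi^\vee(g)=\gamma^K(\pi,\psi^{-1})\,\Id_{V^\vee},\]
where the last equality is Schur's lemma for $\pi^\vee$. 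Hence $\Zz(\widehat{\Phi},f_{v,v^\vee}^\vee)=\chi_\Pi(w)\gamma^K(\pi,\psi^{-1})\Zz(\Phi,f_{v,v^\vee})$ for all $\Phi\in\Cc(\Mat_n(\k),R)$, which is $(\mathbf{GJFE},\psi)$. The correctly read product identity is then a consequence of this, not an input.

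This also removes your side issues:
\begin{itemize}
\item No unit value of the pairing is needed.
\item The argument works for every non-trivial $\psi'$, so you get $\mathbf{GJFE}$ directly. You do not need to assert that $\pi$ has a central character, which does not follow from Schur's lemma for $\pi^\vee$ over a general $R$ (cf.\ the example in Remark \ref{rem J vs dual}).
\item Your discussion of \ref{A1} via ``$\Pi^\vee\supseteq\pi^\vee\otimes\pi$'' is beside the point. What the proof of Proposition \ref{prop main} actually uses is that the stabilizers of $a=(a_0,a_0)$ contain $\{1\}\times U'$. A $(\{1\}\times U')$-invariant linear form on $V_\pi\otimes V_\pi^\vee$ factors through $V_\pi\otimes(V_\pi^\vee)^{U'}=0$, so only cuspidality of $\pi^\vee$ is used.
\end{itemize}
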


\section{Multiplicativity and the general functional equation}\label{sec GJFE} 
We continue with~$\Gg=\Gg_n=\Mat_n(\k)$ and~$G=G_n=\Gg^\times_n=\GL_n(\k)$. Following \cite{GJ}, we prove the multiplicativity relation of Godement-Jacquet gamma factors for~$G_n$, and deduce from it the general Godement-Jacquet functional equation. The following lemma has an obvious proof.

\begin{lemm}\label{matrixcoeffsqs}
Let~$(\pi,V)$ be an~$R$-representation of~$G_n$, and~$(\pi_2,V_2)\leqslant (\pi_1,V_1)\leqslant (\pi,V)$~$R$-subrepresentations of~$G_n$, with subquotient~$(\pi',V')=(\pi_1,V_1/V_2)$.  Assume moreover that the restriction map from $\Hom_R(V,R)$ to $\Hom_R(V_1,R)$ is surjective.  Then given~$\overline{v}\in V',$~$\overline{v}^{\vee}\in V'^{\vee}$ there exist~$v\in V$,~$v^{\vee}\in V^{\vee}$ such that~$f_{v,v^{\vee}}=f_{\overline{v},\overline{v}^{\vee}}$.
\end{lemm}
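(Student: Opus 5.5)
Since $\overline{v}\in V'=V_1/V_2$, we begin by lifting it to a vector $v\in V_1\subseteq V$; any choice of lift works. For the dual vector, $\overline{v}^{\vee}\in V'^{\vee}=\Hom_R(V_1/V_2,R)$ is a linear form on $V_1/V_2$. Composing it with the projection $V_1\to V_1/V_2$ gives an $R$-linear form $\lambda\in\Hom_R(V_1,R)$ that vanishes on $V_2$. By hypothesis the restriction map $\Hom_R(V,R)\to\Hom_R(V_1,R)$ is surjective, so we choose $v^{\vee}\in V^{\vee}$ with $v^{\vee}|_{V_1}=\lambda$.

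Next we compare the two matrix coefficients. For $g\in G_n$, the subspace $V_1$ is $G_n$-stable, so $\pi(g)v=\pi_1(g)v\in V_1$. Its image in $V'$ is $\pi'(g)\overline{v}$, because $\pi'$ is induced by $\pi_1$ on the quotient. Hence
\[f_{v,v^{\vee}}(g)=v^{\vee}(\pi(g)v)=\lambda(\pi_1(g)v)=\overline{v}^{\vee}\big(\pi'(g)\overline{v}\big)=f_{\overline{v},\overline{v}^{\vee}}(g),\]
which is the required equality $f_{v,v^{\vee}}=f_{\overline{v},\overline{v}^{\vee}}$.

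The only step that is not formal is extending $\lambda$ from $V_1$ to all of $V$, and the surjectivity assumption is there precisely to make this possible. Note that $v^{\vee}$ only needs to be $R$-linear, not $G_n$-equivariant, since $V^{\vee}=\Hom_R(V,R)$, so no averaging or further compatibility with the group action is required.
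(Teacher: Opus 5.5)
Your proposal is correct and is the argument the paper has in mind: the paper states that the lemma "has an obvious proof" and gives no details. Lifting $\overline{v}$ to $V_1$, pulling $\overline{v}^{\vee}$ back to a form on $V_1$ vanishing on $V_2$, and extending it to $V$ via the surjectivity hypothesis is exactly that proof.
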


In particular, we infer the following. 

\begin{lemm}\label{sqsgamma}
Let~$(\pi,V)$ be an~$R$-representation of~$G_n$ satisfying~$(\mathbf{GJFE},\psi)$ (resp.~$(\mathbf{WGJFE},\psi)$).  Let~$(\pi',V')=(\pi_1,V_1/V_2)$ be a subquotient of~$(\pi,V)$, and assume that the restriction map from $\Hom_R(V,R)$ to $\Hom_R(V_1,R)$ is surjective. Then~$(\pi',V')$ satisfies~$(\mathbf{GJFE},\psi)$ (resp.~$(\mathbf{WGJFE},\psi)$), and moreover
\[\gamma(\pi',\psi)=\gamma(\pi,\psi).\]
\end{lemm}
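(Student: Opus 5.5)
The plan is to derive this directly from Lemma \ref{matrixcoeffsqs}. Both functional equations in Definitions \ref{df WGJFEpsi} and \ref{df GJFE} are statements about matrix coefficients only, so it is enough to show that every matrix coefficient of $\pi'$ is already a matrix coefficient of $\pi$. Lemma \ref{matrixcoeffsqs} gives exactly this under our surjectivity hypothesis.

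First fix $\overline{v}\in V'$ and $\overline{v}^\vee\in V'^\vee$. Lift $\overline{v}$ to some $v_1\in V_1$. Compose $\overline{v}^\vee$ with the projection $V_1\to V_1/V_2$ to obtain an element of $\Hom_R(V_1,R)$. By the surjectivity assumption this extends to some $v^\vee\in\Hom_R(V,R)=V^\vee$. Then for all $g\in G_n$ we have
\[f_{v_1,v^\vee}(g)=v^\vee(\pi(g)v_1)=\overline{v}^\vee\bigl(\pi'(g)\overline{v}\bigr)=f_{\overline{v},\overline{v}^\vee}(g),\]
because $\pi(g)v_1\in V_1$ and its image in $V'$ is $\pi'(g)\overline{v}$. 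Since $f_{v_1,v^\vee}=f_{\overline{v},\overline{v}^\vee}$ as functions on $G_n$, their inverses $(\cdot)^\vee$ also agree.

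Now take $\Phi\in\Cc(\Gg_n,R)$, or $\Phi\in\Cc(G_n,R)$ in the weak case. Apply the equation satisfied by $\pi$ to the pair $(v_1,v^\vee)$. The two Godement--Jacquet sums
\[\Zz(\widehat{\Phi},(f_{\overline{v},\overline{v}^\vee})^\vee)\quad\text{and}\quad \Zz(\Phi,f_{\overline{v},\overline{v}^\vee})\]
coincide term by term with the corresponding sums for $\pi$. Hence
\[\Zz(\widehat{\Phi},(f_{\overline{v},\overline{v}^\vee})^\vee)=\gamma(\pi,\psi)\,\Zz(\Phi,f_{\overline{v},\overline{v}^\vee})\]
for all $\overline{v},\overline{v}^\vee,\Phi$. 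This shows that $\pi'$ satisfies $(\mathbf{GJFE},\psi)$, respectively $(\mathbf{WGJFE},\psi)$, with the constant $\gamma(\pi,\psi)$.

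The only point needing care is how $\gamma(\pi',\psi)$ is defined. If one defines it as \emph{the} constant in the functional equation, that constant might not be unique, for instance if all matrix coefficients of $\pi'$ vanish. So I would read the conclusion as "$\gamma(\pi,\psi)$ is a valid gamma factor for $\pi'$". Whenever the constant is uniquely determined, for example when some $f_{\overline{v},\overline{v}^\vee}(1)$ is a unit (using \eqref{eq WGJFE0}) or when $\pi'^\vee$ satisfies Schur's lemma (using Proposition \ref{fct WGJFE}), this gives the equality $\gamma(\pi',\psi)=\gamma(\pi,\psi)$. Apart from this remark, no step is a real obstacle.
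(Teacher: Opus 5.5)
Your proposal is correct and matches the paper's argument: the paper derives this lemma directly from Lemma~\ref{matrixcoeffsqs} (every matrix coefficient of $\pi'$ is a matrix coefficient of $\pi$), and your lift-and-extend construction is exactly the proof of that lemma. Your caveat is also fair: the statement ``$\gamma(\pi',\psi)=\gamma(\pi,\psi)$'' should be read as ``$\gamma(\pi,\psi)$ is a valid constant for $\pi'$'', and it is an equality only when that constant is uniquely determined, which the paper leaves implicit.
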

  
By a simple computation, simplifying the proof of \cite[Lemma 3.4.0]{GJ} to the finite field setting, we have:

\begin{lemm}
Let~$n=n_1+n_2$ with~$n_i\geq 1$. If~$\Phi\in \Cc(\Gg_n,R)$, then the map 
\begin{equation}
\label{rem WGJFE step}
\Pi_{\Phi}(x_1,x_2)=\sum_{x\in \Mat_{n_1,n_2}(\kk)} \Phi \begin{pmatrix} x_1 & x \\ 0 & x_2\end{pmatrix} \end{equation} belongs to~$\Cc(\Gg_{n_1}\times \Gg_{n_2},R)\simeq \Cc(\Gg_{n_1},R)\otimes \Cc(\Gg_{n_2},R)$.   Moreover, we have
\begin{equation}\label{eq GJ habile} \widehat{\Pi_{\Phi}}=\Pi_{\widehat{\Phi}},\end{equation}
where both Fourier transforms are defined with respect to~$\psi$.
\end{lemm}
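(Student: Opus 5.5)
The plan is to prove both assertions by direct computation, unwinding the definition \eqref{Fourier} on each side. The only inputs are the orthogonality of character values (Remark \ref{orthcharvalues}) and the non-degeneracy of the trace pairing between rectangular matrices.

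The first assertion is formal. $\Pi_\Phi$ is an $R$-valued function on the finite set $\Gg_{n_1}\times\Gg_{n_2}$. Moreover, $\Cc(\Gg_{n_1}\times\Gg_{n_2},R)\simeq\Cc(\Gg_{n_1},R)\otimes\Cc(\Gg_{n_2},R)$ because both sides are free $R$-modules with bases of indicator functions, and $\delta_{(m_1,m_2)}\mapsto\delta_{m_1}\otimes\delta_{m_2}$ is an isomorphism. The Fourier transform on $\Gg_{n_1}\times\Gg_{n_2}$ is taken with respect to $\psi\circ\tr$, where $\tr$ is the sum of the two traces. It is normalized by $|\Gg_{n_1}|^{-1/2}|\Gg_{n_2}|^{-1/2}$. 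I will fix the square roots compatibly with
\[|\Gg_n|^{1/2}=q^{n_1n_2}\,|\Gg_{n_1}|^{1/2}|\Gg_{n_2}|^{1/2},\]
which is consistent since $|\Gg_n|=q^{2n_1n_2}|\Gg_{n_1}||\Gg_{n_2}|$. This compatibility is implicit in the statement.

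For \eqref{eq GJ habile}, fix $(a_1,a_2)$ and compute $\Pi_{\widehat\Phi}(a_1,a_2)=\sum_{y}\widehat\Phi(A_y)$, where $A_y=\begin{pmatrix} a_1 & y\\ 0 & a_2\end{pmatrix}$ and $y$ ranges over $\Mat_{n_1,n_2}(\k)$. Expanding, this equals $|\Gg_n|^{-1/2}\sum_y\sum_{g\in\Gg_n}\Phi(g)\Psi(gA_y)$. Write $g$ in blocks $\begin{pmatrix} g_1 & g_{12}\\ g_{21} & g_2\end{pmatrix}$. Then
\[\tr(gA_y)=\tr(g_1a_1)+\tr(g_{21}y)+\tr(g_2a_2).\]
Exchange the sums and perform the sum over $y$ first. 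The map $y\mapsto\psi(\tr(g_{21}y))$ is a character of the $\k$-vector space $\Mat_{n_1,n_2}(\k)$ of the form $\psi\circ\lambda$ with $\lambda$ $\k$-linear. If $g_{21}\neq0$, then $\lambda$ is surjective by non-degeneracy of the trace pairing $\Mat_{n_2,n_1}(\k)\times\Mat_{n_1,n_2}(\k)\to\k$. Hence the sum over $y$ vanishes, exactly as in Remark \ref{orthcharvalues}: the corresponding sum of $p$-th roots of unity is already $0$ in $\mathbb{Z}[\mu_p]$. If $g_{21}=0$, the sum over $y$ equals $q^{n_1n_2}$.

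Therefore only block upper triangular $g$ survive, and we get
\[|\Gg_n|^{-1/2}q^{n_1n_2}\sum_{g_1,g_2,g_{12}}\Phi\begin{pmatrix} g_1 & g_{12}\\ 0 & g_2\end{pmatrix}\psi\big(\tr(g_1a_1)+\tr(g_2a_2)\big).\]
After substituting the normalization above, this is precisely $\widehat{\Pi_\Phi}(a_1,a_2)$.

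There is no real obstacle here. The only points requiring care are the vanishing of the character sum over $y$ in a general ring $R$, which is handled by reducing to $\mathbb{Z}[\mu_p]$, and the consistent choice of square roots in the normalizations.
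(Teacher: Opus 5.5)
Your proof is correct and is the same direct computation the paper has in mind; the paper writes out no proof, only calling it ``a simple computation'' that simplifies Godement--Jacquet's Lemma 3.4.0. Summing over the upper-right block kills every $g$ with nonzero lower-left block by orthogonality of characters. The leftover factor $q^{n_1n_2}$ is absorbed by the compatible choice $|\Gg_n|^{1/2}=q^{n_1n_2}|\Gg_{n_1}|^{1/2}|\Gg_{n_2}|^{1/2}$, which the paper leaves implicit and you rightly make explicit.
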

 
Let~$P_{(n_1,n_2)}$ denote the standard (block upper triangular) parabolic subgroup of~$G_n$ with Levi factor~$G_{n_1}\times G_{n_2}$.  Let~$\pi_i$ be~$R$-representations of~$G_{n_i}$, for~$i=1,2$, then we use the product notation~$\times$ for parabolic induction:
\[\pi_1\times \pi_2:=\mathrm{Ind}_{P_{(n_1,n_2)}}^{G_n}(\pi_1\otimes\pi_2).\]
%the product notation~$\times$ for parabolic induction, 
We now obtain a generalization of \cite[Theorem 3.4]{GJ}, with the same proof, which we recall. 

\begin{theo}\label{thm mult GJ}
Let~$\pi_i$ be~$R$-representations of~$G_{n_i}$ for~$i=1,2$, and~$(\pi,W_1/W_2)$ be a nonzero subquotient of~$(\pi_1\times \pi_2,V_{\pi_1\times \pi_2})$. Assume that the restriction map from $\Hom_R(V_{\pi_1\times \pi_2},R)$ to $\Hom_R(W_1,R)$ is surjective, and that both~$\pi_1$ and~$\pi_2$ satisfy~$(\mathbf{GJFE},\psi)$ (resp.~$(\mathbf{WGJFE},\psi)$). Then~$\pi$ satisfies~$(\mathbf{GJFE},\psi)$ (resp.~$(\mathbf{WGJFE},\psi)$), and moreover we have the multiplicativity relation 
\[\gamma(\pi,\psi)= \gamma(\pi_1,\psi)\gamma(\pi_2,\psi).\]
\end{theo}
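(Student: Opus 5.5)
Following Godement--Jacquet, I would first reduce to the case $\pi=\pi_1\times\pi_2$ itself. Lemma \ref{sqsgamma} applies directly, since the restriction map from $\Hom_R(V_{\pi_1\times\pi_2},R)$ to $\Hom_R(W_1,R)$ is surjective by hypothesis. So it suffices to show that $\Pi:=\pi_1\times\pi_2$ satisfies $(\mathbf{GJFE},\psi)$ (resp.\ $(\mathbf{WGJFE},\psi)$) with constant $\gamma(\pi_1,\psi)\gamma(\pi_2,\psi)$.

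Realize $V_\Pi$ as functions $f:G_n\to V_1\otimes V_2$ with $f(pg)=(\pi_1\otimes\pi_2)(p)f(g)$ for $p\in P=P_{(n_1,n_2)}$, and realize $V_\Pi^\vee$ similarly as $\pi_1^\vee\times\pi_2^\vee$. The pairing is $\langle f,f^\vee\rangle=\sum_{k\in P\backslash G_n}\langle f(k),f^\vee(k)\rangle$, which is legitimate because $p$ is invertible in $R$ and no modulus character is needed over a finite field. Unfolding, each matrix coefficient of $\Pi$ becomes a finite linear combination of functions of the form
\[
g\mapsto \sum_{k\in P\backslash G_n}\sum_{u\in U}\langle (\pi_1\otimes\pi_2)(m(kgk'^{-1}\ldots))\rangle.
\]
More cleanly, for $\Phi\in\Cc(\Gg_n,R)$ we write $g=k^{-1}pk'$ with $k,k'$ in a set of coset representatives. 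The Godement--Jacquet sum $\Zz(\Phi,f_{f,f^\vee})$ then becomes a finite sum over $k,k'\in P\backslash G_n$ of sums over $p=\begin{pmatrix}x_1&x\\0&x_2\end{pmatrix}\in P$ of $\Phi(k^{-1}pk')\,\langle\pi_1(x_1)\otimes\pi_2(x_2)f(k'),f^\vee(k)\rangle$. Summing first over $x$ produces $\Pi_{\Phi_{k,k'}}(x_1,x_2)$ with $\Phi_{k,k'}(y)=\Phi(k^{-1}yk')$, by \eqref{rem WGJFE step}. Each term is therefore a sum of products $\Zz(\Phi^1,f^1)\Zz(\Phi^2,f^2)$ for $G_{n_1}\times G_{n_2}$, using $\Cc(\Gg_{n_1}\times\Gg_{n_2},R)\simeq\Cc(\Gg_{n_1},R)\otimes\Cc(\Gg_{n_2},R)$ and expanding $f(k')\in V_1\otimes V_2$ and $f^\vee(k)$ into pure tensors.

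Now apply the functional equations for $\pi_1$ and $\pi_2$ factorwise. The Fourier transform commutes with two-sided translation up to swapping the roles: $\widehat{\Phi_{k,k'}}(y)=\widehat{\Phi}(k'^{-1}yk)$, since $\tr(gak^{-1}yk')=\tr(k' g a k^{-1} y)$. Combined with \eqref{eq GJ habile}, $\widehat{\Pi_{\Phi}}=\Pi_{\widehat\Phi}$, the transformed side is exactly the corresponding unfolding of $\Zz(\widehat\Phi,f_{f,f^\vee}^\vee)$. In that unfolding the inversion $g\mapsto g^{-1}$ exchanges $k$ and $k'$ and replaces $(x_1,x_2)$ by $(x_1^{-1},x_2^{-1})$ on the Levi; the unipotent part is absorbed because summing $\Phi$ over $x$ is what $\Pi_\Phi$ does. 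We get the factor $\gamma(\pi_1,\psi)\gamma(\pi_2,\psi)$, since the product normalization $|\Gg_{n_1}|^{1/2}|\Gg_{n_2}|^{1/2}|\Mat_{n_1,n_2}(\k)|=|\Gg_n|^{1/2}$ holds with compatible choices of square roots. In the weak case I would restrict to $\Phi$ supported on $G_n$, for which $\Pi_\Phi$ is supported on $G_{n_1}\times G_{n_2}$; by Remark \ref{rem wgjfe iff wgjfe0} it even suffices to treat $\Phi=\delta_1$.

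The main obstacle is the bookkeeping in the unfolding step. One has to check that inverting $g=k^{-1}pk'$ gives the $P$-decomposition $k'^{-1}p^{-1}k$, and that $\Pi$ applied to $\widehat\Phi$ matches the transform of the unipotent integration with the correct normalization. The latter is precisely the content of \eqref{eq GJ habile}, but the roles of $\widehat{\Phi}$ and of the inverted matrix coefficient must be aligned carefully. I would first do this computation for $\Phi=\delta_1$ and pure tensor data, and then extend by linearity and translation as in Proposition \ref{prop WGJFE irr}.
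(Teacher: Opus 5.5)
Your proposal is correct and follows the paper's proof essentially step by step: reduction to $\pi_1\times\pi_2$ via Lemma \ref{sqsgamma}, unfolding the induced matrix coefficient over pairs of cosets in $P\backslash G_n$, summing out the unipotent radical to obtain $\Pi_{\Phi}$, using $\widehat{(r',r)\cdot\Phi}=(r,r')\cdot\widehat{\Phi}$ together with \eqref{eq GJ habile} on the dual side, and handling the weak case through the support of $\Pi_\Phi$. Your remark that \eqref{eq GJ habile} needs compatible square roots is left implicit in the paper; but note that the shortcut ``do $\Phi=\delta_1$, then extend by linearity and translation'' only reaches functions supported on $G_n$, so for $(\mathbf{GJFE},\psi)$ you must run the unfolding for arbitrary $\Phi\in\Cc(\Gg_n,R)$, as your main argument already does.
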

\begin{proof}
We start with~$(\mathbf{GJFE},\psi)$.  By Lemma \ref{sqsgamma}, it is sufficient to prove the theorem for~$\pi=\pi_1\times \pi_2$.  
%
%Since, by Lemma \ref{matrixcoeffsqs}, matrix coefficients of~$\pi$ can be realized as matrix coefficients of~$\pi_1\times \pi_2$, we may assume that~$\pi=\pi_1\times \pi_2$. 
Let~$P_{(n_1,n_2)}=M_{(n_1,n_2)}U_{(n_1,n_2)}$ be the standard Levi decomposition of the standard parabolic subgroup~$P_{(n_1,n_2)}$, i.e.,
\[M_{(n_1,n_2)}=\left\{\diag(g_1,g_2),\ g_i\in G_{n_i}\right\},\quad\text{and}~U_{(n_1,n_2)}=\left\{\begin{pmatrix} I_{n_1} & x \\ 0 & I_{n_2} \end{pmatrix}, \ x\in \Mat_{n_1,n_2}(\kk)\right\}.\] 
For 
$\alpha\in \pi_1\times \pi_2$, and~$\alpha^\vee\in (\pi_1\times \pi_2)^\vee \simeq \pi_1^\vee\times \pi_2^\vee$, the corresponding matrix coefficient of~$\pi$ is given by \[f_{\alpha,\alpha^\vee}(g)=\sum\limits_{r\in \mathcal{R}} \langle \alpha(rg),\ \alpha^\vee(r) \rangle,\] where~$\mathcal{R}$ is a set of representatives of~$P\backslash G$ and~$\langle \ ,\  \rangle$ stands for the duality bracket between~$\pi_1\otimes \pi_2$ and 
$\pi_1^\vee \otimes \pi_2^\vee$. 

Now take~$\Phi\in \Cc(G_n,R)$, and write~$(x',x)\cdot \Phi(g)=\Phi(x^{-1}gx')$ for~$x,g,x'\in G$. Then 
\begin{align*}
\Zz(\Phi,f_{\alpha,\alpha^\vee})&=\sum\limits_{g\in G} \sum\limits_{r\in \mathcal{R}} \Phi(g)\langle \alpha(rg),\ \alpha^\vee(r) \rangle= 
\sum\limits_{g\in G} \sum\limits_{r\in \mathcal{R}} \Phi(r^{-1}g)\langle \alpha(g),\ \alpha^\vee(r) \rangle\\
&=\sum\limits_{r' \in \mathcal{R}} \sum\limits_{y\in P} \sum\limits_{r\in \mathcal{R}} \Phi(r^{-1}yr')\langle \alpha(yr'),\ \alpha^\vee(r) \rangle\\
&= \sum\limits_{r' \in \mathcal{R}} \sum\limits_{y\in P} \sum\limits_{r\in \mathcal{R}} ((r',r)\cdot\Phi)(y) \langle \pi_1\otimes \pi_2(y)\alpha(r'),\ \alpha^\vee(r) \rangle\\
&= \sum\limits_{(r,r') \in \mathcal{R}^2}  \sum_{\substack{u\in U_{(n_1,n_2)}\\(g_1,g_2)\in G_{n_1}\times G_{n_2}}} ((r',r)\cdot\Phi)(u\diag(g_1,g_2)) %\sum\limits_{(g_1,g_2)\in G_1\times G_2} 
\langle \pi_1(g_1)\otimes \pi_2(g_2)\alpha(r'),\ \alpha^\vee(r) \rangle \\
&=\sum\limits_{(g_1,g_2)\in G_{n_1}\times G_{n_2}} \sum\limits_{(r,r') \in \mathcal{R}^2} \Pi_{(r',r)\cdot\Phi}(\diag(g_1,g_2))  \langle \pi_1(g_1)\otimes \pi_2(g_2)\alpha(r'),\ \alpha^\vee(r) \rangle
 \\&=\sum\limits_{(g_1,g_2)\in G_{n_1}\times G_{n_2}} \sum\limits_{(r,r') \in \mathcal{R}^2}  \Pi_{(r',r)\cdot\Phi}(\diag(g_1,g_2)) J(g_1,g_2,r,r') ,\end{align*}
where~$J(g_1,g_2,r,r') =\langle \pi_1(g_1)\otimes \pi_2(g_2)\alpha(r'),\ \alpha^\vee(r) \rangle$.  Given,~$(r,r')\in\mathcal{R}^2$, there exist positive integers~$v_{r,r'},j_{r,r'}$ so that we can write
\[J(g_1,g_2,r,r')=\sum\limits_{u=1}^{v_{r,r'}} f_{1,u}^{r,r'}(g_1)f_{2,u}^{r,r'}(g_2),\] 
where each~$f_{i,u}^{r,r'}$ is a matrix coefficient of~$\pi_i$, and write  
\[ \Pi_{(r',r)\cdot\Phi}(\diag(g_1,g_2))=\sum\limits_{i=1}^{j_{r,r'}}\phi_{1,i}^{r,r'}(g_1)\phi_{2,i}^{r,r'}(g_2),\]
where~$\phi_{k,i}^{r,r'}\in\Cc(G_{n_k},R)$.  We finally obtain 
 \[\Zz(\Phi,f_{\alpha,\alpha^\vee})=\sum\limits_{(r,r') \in \mathcal{R}^2} \sum\limits_{u=1}^{v_{r,r'}}\sum\limits_{i=1}^{j_{r,r'}} \Zz(\phi_{1,i}^{r,r'},f_{1,u}^{r,r'}) \Zz(\phi_{2,i}^{r,r'},f_{2,u}^{r,r'}).\]
Similarly, observing that $ \widehat{(r',r)\cdot \Phi}={(r,r')\cdot\widehat{ \Phi}}$ we find~$\widehat{\Pi_{(r',r)\cdot\Phi}}=\Pi_{(r,r')\cdot \widehat{\Phi}}$ by \eqref{eq GJ habile}, and we obtain
\[\Zz(\widehat{\Phi},f_{\alpha,\alpha^\vee}^\vee)=\sum\limits_{(r,r') \in \mathcal{R}^2} \sum\limits_{u=1}^{v_{r,r'}}\sum\limits_{i=1}^{j_{r,r'}} \Zz(\widehat{\phi_{1,i}^{r,r'}},(f_{1,u}^{r,r'})^\vee) \Zz(\widehat{\phi_{2,i}^{r,r'}},(f_{2,u}^{r,r'})^\vee).\] The statement follows. Now, in view of Equation \eqref{rem WGJFE step}, the statement for~$(\mathbf{WGJFE},\psi)$ also follows from the proof above, by observing that if~$\Phi$ is supported on~$G_n$, then~$\Pi_{(r,r')\cdot\Phi}$ is supported in~$\G_{n_1}\times \G_{n_2}$, so that we can take the maps~$\phi_{k,i}^{r,r'}$ in~$\Cc(G_{n_k},R)$, for~$k=1,2$.
\end{proof}

As a first corollary, we obtain.

\begin{coro}\label{coro mult GJ}
Let~$\pi_i$ be a representation of~$G_{n_i}$ for~$i=1,\dots,k$,~$k\geq 1$, and~$(\pi,W_1/W_2)$ be a nonzero subquotient of~$(\pi_1\times\cdots \times \pi_k,V_{\pi_1\times\cdots \times \pi_k})$. Assume that the restriction map from $\Hom_R(V_{\pi_1\times\cdots \times \pi_k},R)$ to $\Hom_R(W_1,R)$ is surjective, and moreover that each~$\pi_i$ satisfies the~$\mathbf{GJFE}$ (resp.~$\mathbf{WGJFE}$). Then~$\pi$ satisfies~$\mathbf{GJFE}$ (resp.~$\mathbf{WGJFE}$), and moreover one has the multiplicativity relation 
\[\gamma(\pi,\psi)= \prod_{i=1}^k\g(\pi_i,\psi).\]
\end{coro}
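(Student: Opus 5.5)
Induction on $k$, with Theorem \ref{thm mult GJ} as the inductive step. For $k=1$ the statement is exactly Lemma \ref{sqsgamma}, applied to the subquotient $W_1/W_2$ of $\pi_1$ together with the surjectivity hypothesis on restriction of linear forms. For $k\geq 2$, write $\pi_1\times\cdots\times\pi_k=\pi_1\times\sigma$ with $\sigma=\pi_2\times\cdots\times\pi_k$, using transitivity of parabolic induction. This identifies $\mathrm{Ind}_{P_{(n_1,\dots,n_k)}}^{G_n}(\pi_1\otimes\cdots\otimes\pi_k)$ with $\mathrm{Ind}_{P_{(n_1,n-n_1)}}^{G_n}\big(\pi_1\otimes \mathrm{Ind}^{G_{n-n_1}}_{P_{(n_2,\dots,n_k)}}(\pi_2\otimes\cdots\otimes\pi_k)\big)$ as $R[G_n]$-modules, so $W_1/W_2$ is a subquotient of $\pi_1\times\sigma$ satisfying the same surjectivity hypothesis.

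By induction applied to $\sigma$ itself (the trivial subquotient $W_1=V_\sigma$, $W_2=0$, for which the restriction map is the identity, hence surjective), $\sigma$ satisfies $\mathbf{GJFE}$ (resp. $\mathbf{WGJFE}$) with $\gamma(\sigma,\psi')=\prod_{i\geq 2}\gamma(\pi_i,\psi')$ for every nontrivial $\psi'\in\Hb(\k,R^\times)$. Then Theorem \ref{thm mult GJ}, applied for each nontrivial $\psi'$ to $\pi_1$, $\sigma$ and the subquotient $W_1/W_2$ of $\pi_1\times\sigma$, gives that $\pi$ satisfies $(\mathbf{GJFE},\psi')$ (resp. $(\mathbf{WGJFE},\psi')$) with $\gamma(\pi,\psi')=\gamma(\pi_1,\psi')\gamma(\sigma,\psi')$. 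This proves both the functional equation and the multiplicativity relation.

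The only point requiring care is the identification of $\pi_1\times\sigma$ with the $k$-fold induction, so that the surjectivity hypothesis on $\Hom_R(V,R)\to\Hom_R(W_1,R)$ transfers verbatim. This is just transitivity of induction for finite groups, realized by the explicit isomorphism of function spaces $f\mapsto (g\mapsto (h\mapsto f(hg)))$. The step is not really an obstacle, since the hypothesis concerns only the ambient module $V$ and the submodule $W_1$, both of which are unchanged under this isomorphism.
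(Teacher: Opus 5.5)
Your proposal is correct and follows the paper's proof: induction on $k$ via transitivity of parabolic induction, with Theorem \ref{thm mult GJ} as the inductive step. The only cosmetic difference is the handling of subquotients. The paper first uses Lemma \ref{sqsgamma} to reduce to $\pi=\pi_1\times\cdots\times\pi_k$, whereas you pass the subquotient $W_1/W_2$ directly to Theorem \ref{thm mult GJ}, which already allows subquotients.
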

\begin{proof}
Again, by Lemma \ref{sqsgamma}, we may assume that~$\pi=\pi_1\times\dots \times \pi_k$. The result follows by induction from Theorem \ref{thm mult GJ}. 
\end{proof}

We now recall the following result from \cite[Theorem 3.1]{Roddity} for complex representations, for which we provide a different proof.

\begin{lemm}\label{lm GJ GL1}
Let~$\chi:\kk^\times\to R^\times$ be a character. Then for any~$\Phi\in \Cc(\kk,R)$:
\[\Zz(\widehat{\Phi},\chi^{-1})=\gamma(\chi,\psi) \Zz(\Phi,\chi)+q^{-1/2}\Phi(0)\sum\limits_{x\in \kk^\times}\chi^{-1}(x).\]
In particular,~$\chi$ satisfies~$\mathbf{WGJFE}$, and further satisfies~$\mathbf{GJFE}$ in the following cases:
\begin{itemize}
\item $R$ is an integral domain and~$\chi$ is non-trivial;
\item  $R$ is a field of characteristic~$\ell\mid q-1$.
\end{itemize}
%~$\chi$ satisfies (GJFE) whenever~$\chi$ is non trivial, or when~$\ell$ divides~$q-1$, and it always satisfies (WGJFE).
\end{lemm}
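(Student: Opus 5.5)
We work with $n=1$, so $\Gg=\kk$, $G=\kk^\times$, $|\Gg|^{1/2}=q^{1/2}$, and the matrix coefficients of $\chi$ are the scalar multiples of $\chi$ itself, with $f^\vee=\chi^{-1}$. The plan is to split $\Phi$ according to its value at $0$ and reduce to the computation on $G$. Write
\[\Phi=\Phi_0+\Phi(0)\delta_0,\]
where $\Phi_0$ is supported on $\kk^\times$. The map $\Phi\mapsto \Zz(\widehat\Phi,\chi^{-1})-\gamma(\chi,\psi)\Zz(\Phi,\chi)$ is linear, so the two pieces can be treated separately.

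For $\Phi_0$ we want $\Zz(\widehat{\Phi_0},\chi^{-1})=\gamma(\chi,\psi)\Zz(\Phi_0,\chi)$. This is the weak functional equation. Since $\chi$ is one-dimensional, $\chi^\vee=\chi^{-1}$ satisfies Schur's lemma trivially, because $\End_R(R)=R$. Proposition \ref{prop WGJFE irr} therefore applies, with $\gamma(\chi,\psi)=\gamma^K(\chi,\psi)=q^{-1/2}\sum_{x\in\kk^\times}\psi(x)\chi^{-1}(x)$. This equality holds for every $\psi$, so $\chi$ satisfies $\mathbf{WGJFE}$.

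For $\delta_0$ we compute directly. Here $\widehat{\delta_0}(a)=q^{-1/2}\psi(0)=q^{-1/2}$ for all $a$. Hence
\[\Zz(\widehat{\delta_0},\chi^{-1})=q^{-1/2}\sum_{x\in\kk^\times}\chi^{-1}(x),\]
while $\Zz(\delta_0,\chi)=0$, because $\delta_0$ vanishes on $\kk^\times$. Adding the two pieces gives the displayed identity. This step is immediate, and we expect no real obstacle anywhere in the argument.

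For the cases where $\mathbf{GJFE}$ holds, it suffices to show that $S=\sum_{x\in\kk^\times}\chi^{-1}(x)$ vanishes.

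If $R$ is an integral domain and $\chi$ is non-trivial, choose $y$ with $\chi(y)\neq1$. Reindexing the sum gives $\chi^{-1}(y)S=S$, so $(\chi^{-1}(y)-1)S=0$, and hence $S=0$ because $R$ has no zero divisors.

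If $R$ is a field of characteristic $\ell\mid q-1$, we split further. When $\chi$ is non-trivial, $S=0$ by the previous case. When $\chi$ is trivial, $S=q-1=0$ in $R$.

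In both cases the error term drops out. The identity then holds for all $\Phi\in\Cc(\kk,R)$ and every non-trivial $\psi$, which is exactly $\mathbf{GJFE}$.
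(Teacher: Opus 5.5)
Your proof is correct and follows essentially the paper's route. The paper also splits off the $y=0$ term in the double sum defining $\Zz(\widehat{\Phi},\chi^{-1})$, and identifies the remaining part, supported on $\kk^\times$, with $\gamma(\chi,\psi)\Zz(\Phi,\chi)$ by a direct factorization rather than by citing Proposition \ref{prop WGJFE irr}, which amounts to the same computation; your check that $\sum_{x\in\kk^\times}\chi^{-1}(x)=0$ in the two listed cases spells out what the paper leaves implicit.
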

\begin{proof}
For~$\mathbf{WGJFE}$ we refer to Proposition \ref{prop WGJFE irr}, as~$\chi$ satisfies Schur's lemma. Now we prove the formula of the statement,
\begin{align*}
\Zz(\widehat{\Phi},\chi^{-1})&=q^{-1/2}\sum\limits_{x\in \kk^\times}\sum\limits_{y\in \kk} \Phi(y)\psi(yx)\chi^{-1}(x)
\\&=q^{-1/2}\Phi(0)\sum\limits_{x\in \kk^\times}\chi^{-1}(x)+q^{-1/2}\sum\limits_{x\in \kk^\times}\sum\limits_{y\in \kk^\times} \Phi(y)\psi(yx)\chi^{-1}(x).\end{align*} But 
\begin{align*}
q^{-1/2}\sum\limits_{x\in \kk^\times}\sum\limits_{y\in \kk^\times} \Phi(y)\psi(yx)\chi^{-1}(x)&=\left(q^{-1/2}\sum\limits_{x\in \kk^\times}\psi(x)\chi^{-1}(x)\right)\left(\sum\limits_{y\in \kk^\times}\Phi(y)\chi(y) \right)\\
&=  \Zz(\widehat{\delta_1},\chi^{-1})\Zz(\Phi,\chi)=\gamma(\chi,\psi) \Zz(\Phi,\chi).\end{align*}
The final statement follows as they give conditions for the sum~$\sum_{x\in\k^\times}\chi^{-1}(x)$ to vanish.%in the former case the argument is analogous to that of Remark \ref{orthcharvalues}.
\end{proof}

\begin{rema}\label{rem triv gamma}
\begin{enumerate}
\item When~$\chi=\mathbf{1}$ is trivial, then
\[\gamma(\mathbf{1},\psi)=-q^{-1/2}.\]  In particular,~$\gamma(\mathbf{1},\psi)^2=q^{-1}$.
\item When~$q$ is odd,~$\ell\neq 2$, and~$\chi:\kk^\times \rightarrow \{\pm 1\}$ is non-trivial quadratic character valued in~$\{\pm 1\}\subset R^\times$, then we have the classical quadratic Gauss sum, and by applying the functional equation twice we find
\[\gamma(\chi,\psi)^2=\gamma(\chi,\psi)\gamma(\chi^{-1},\psi)=\chi(-1)\gamma(\chi,\psi)\gamma(\chi^{-1},\psi^{-1})=\chi(-1).\]
\end{enumerate}
\end{rema}

 We now recover a generalization to include~$\ell$-modular representations of \cite[2, Proposition]{Macdonald} (see also \cite[Theorem 4.1.1]{Roddity}), as a straightforward consequence of Proposition \ref{prop cuspidal GJFE}, Theorem \ref{thm mult GJ}, Lemma \ref{lm GJ GL1}, and the existence of cuspidal support for irreducible representations.  %Recall, we assume that~$R$ is a~$\mathbb{Z}[\mu_p,|\Gg|^{-1/2}]$-algebra throughout. 

\begin{theo}\label{thm GJFE}
Suppose that~$R$ is an algebraically closed field of characteristic different to~$p$.  Let~$(\pi,V)$ be an irreducible~$R$-representation of~$G$. 
\begin{enumerate}
\item Then~$\pi$ satisfies~$\mathbf{WGJFE}$. 
\item If, moreover, either
\begin{itemize}
\item  the cuspidal support of~$\pi$ does not contain the trivial character of~$\kk^\times$, 
\item or if~$0\neq \ell\mid q-1$, 
\end{itemize}
then~$\pi$ satisfies~$\mathbf{GJFE}$.  \end{enumerate}
\end{theo}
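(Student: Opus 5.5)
Part (1) needs no structure theory. Since $R$ is an algebraically closed field of characteristic $\ell\neq p$ and $\pi$ is irreducible, $\pi$ is absolutely irreducible and finite dimensional. Hence $\pi^\vee$ is irreducible and satisfies Schur's lemma. By Proposition \ref{prop WGJFE irr}, $\pi$ satisfies $(\mathbf{WGJFE},\psi')$ for every non-trivial $\psi'$, which is $\mathbf{WGJFE}$ (see also Proposition \ref{fa A}, since $\pi$ has a central character). Alternatively, this also follows from the multiplicativity argument below with $\mathbf{WGJFE}$ in place of $\mathbf{GJFE}$.

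For part (2), I will use the existence of cuspidal support. There is a decomposition $n=n_1+\cdots+n_k$ and irreducible cuspidal $R$-representations $\rho_i$ of $G_{n_i}$ such that $\pi$ is a subquotient of $\rho_1\times\cdots\times\rho_k$. More precisely, $\pi$ can be realized as $W_1/W_2$ with $W_2\subseteq W_1\subseteq V_{\rho_1\times\cdots\times\rho_k}$. Since $R$ is a field, the restriction map $\Hom_R(V_{\rho_1\times\cdots\times\rho_k},R)\to\Hom_R(W_1,R)$ is surjective: any linear form on a subspace extends to the whole finite-dimensional space. So Corollary \ref{coro mult GJ} applies once I check that each $\rho_i$ satisfies $\mathbf{GJFE}$. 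It then gives that $\pi$ satisfies $\mathbf{GJFE}$, with $\gamma(\pi,\psi)=\prod_i\gamma(\rho_i,\psi)$.

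I check each factor by splitting into two cases. If $n_i\geq2$, then $\rho_i^\vee$ is irreducible, hence satisfies Schur's lemma, and it is cuspidal by Remark \ref{rem J vs dual}. Proposition \ref{prop cuspidal GJFE} therefore gives $\mathbf{GJFE}$ for $\rho_i$. If $n_i=1$, then $\rho_i=\chi$ is a character of $\k^\times$, and Lemma \ref{lm GJ GL1} gives $\mathbf{GJFE}$ in two situations. The first is when $\chi$ is non-trivial, which is guaranteed by the first hypothesis, since the trivial character does not occur in the cuspidal support. The second is when $\ell\mid q-1$ with $\ell\neq0$, which is the second hypothesis; there $\chi$ may even be trivial.

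The main obstacle is making sure the cuspidal-support statement holds in the $\ell$-modular setting. I need it in the form "$\pi$ is a subquotient (in fact a subrepresentation) of a product of cuspidals". This is standard via Harish-Chandra theory: by Frobenius reciprocity, $\pi$ embeds in $\mathrm{Ind}_P^G$ of an irreducible cuspidal representation of a Levi subgroup, and Levi subgroups of $G_n$ are products $G_{n_1}\times\cdots\times G_{n_k}$. Irreducible representations of such a product over an algebraically closed field are tensor products $\rho_1\otimes\cdots\otimes\rho_k$. I will use this as known. The rest is bookkeeping of the hypotheses of Corollary \ref{coro mult GJ}.
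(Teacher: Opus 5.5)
Your proposal is correct and follows the paper's route: part (1) comes from Schur's lemma via Proposition \ref{prop WGJFE irr}, as in the remark after Proposition \ref{fa A}, and part (2) realizes $\pi$ as a subquotient of a product of cuspidals, using Proposition \ref{prop cuspidal GJFE} for the factors with $n_i\geq 2$, Lemma \ref{lm GJ GL1} for the characters of $\k^\times$, and Corollary \ref{coro mult GJ}, where surjectivity of restriction is automatic over a field. Treating $n_i=1$ separately is exactly what is needed, since the cuspidal functional equation of Section \ref{sec GJcusp} requires $n\geq 2$.
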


In a similar vein.

\begin{prop}\label{prop gamma unit}
Suppose that~$R$ is an algebraically closed field of characteristic different to~$p$.  Let~$(\pi,V)$ be an irreducible~$R$-representation of~$G$.
%\begin{enumerate}
%\item Then~$\pi$ satisfies (WGJFE).
%\item 
Suppose that~$\pi$ is a subquotient of~$\pi'\times \1^k$, where~$\pi'$ is a subquotient of a representation induced from cuspidal representations all different to~$\1$. Then 
\[\gamma(\pi,\psi)=(-q)^{-k/2}\g(\pi',\psi),\quad\text{and}\quad \g(\pi,\psi)\g(\pi^\vee,\psi^{-1})=q^{-k}.\]
%\end{enumerate}
\end{prop}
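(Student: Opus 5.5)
The proof is multiplicativity followed by the computation of $\gamma(\1,\psi)$ from Remark \ref{rem triv gamma}. Since $R$ is an algebraically closed field of characteristic different from $p$, the restriction maps from $\Hom_R(V,R)$ to $\Hom_R(W_1,R)$ are automatically surjective (vector spaces over a field), so the hypotheses on restriction maps in Theorem \ref{thm mult GJ} and Corollary \ref{coro mult GJ} hold.

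We work with $\mathbf{WGJFE}$, which every representation involved satisfies. Let $\pi'$ be a subquotient of $\rho_1\times\cdots\times\rho_r$ with each $\rho_i$ cuspidal and $\rho_i\neq\1$. Each $\rho_i$ is irreducible, so it satisfies Schur's lemma, and hence $\mathbf{WGJFE}$ by Proposition \ref{prop WGJFE irr} and Proposition \ref{fa A}. The character $\1$ satisfies $\mathbf{WGJFE}$ by Lemma \ref{lm GJ GL1}. Then:
\begin{itemize}
\item $\pi$ is a subquotient of $\pi'\times\1\times\cdots\times\1$, so Theorem \ref{thm mult GJ} applied inductively (as in Corollary \ref{coro mult GJ}) gives $\gamma(\pi,\psi)=\gamma(\pi',\psi)\gamma(\1,\psi)^k$.
\item By Remark \ref{rem triv gamma}, $\gamma(\1,\psi)=-q^{-1/2}$.
\end{itemize}
So $\gamma(\pi,\psi)=(-q^{-1/2})^k\gamma(\pi',\psi)$, which is the first identity once $(-q)^{-k/2}$ is read as $(-1)^k q^{-k/2}$ with the paper's fixed square root $q^{1/2}$. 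One subtlety: for $G_1$ we have $|\Gg|^{1/2}=q^{1/2}$, and for $G_n$ we have $|\Gg_n|^{1/2}=q^{n^2/2}$. For the multiplicativity identity, and in particular \eqref{eq GJ habile}, to hold on the nose, these square roots must be chosen compatibly, i.e.\ $|\Gg_n|^{1/2}=|\Gg_{n_1}|^{1/2}|\Gg_{n_2}|^{1/2}|\Mat_{n_1,n_2}(\k)|$. I would make this normalization explicit, taking all square roots as powers of a fixed $q^{1/2}$.

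For the second identity, pass to contragredients. The representation $\pi^\vee$ is a subquotient of $(\pi'\times\1^k)^\vee\simeq\pi'^\vee\times\1^k$, and $\pi'^\vee$ is a subquotient of $\rho_1^\vee\times\cdots\times\rho_r^\vee$ with $\rho_i^\vee\neq\1$. The same argument with $\psi^{-1}$ gives $\gamma(\pi^\vee,\psi^{-1})=(-q^{-1/2})^k\gamma(\pi'^\vee,\psi^{-1})$. Multiplying,
\[\gamma(\pi,\psi)\gamma(\pi^\vee,\psi^{-1})=q^{-k}\,\gamma(\pi',\psi)\gamma(\pi'^\vee,\psi^{-1}).\]
It remains to show $\gamma(\pi',\psi)\gamma(\pi'^\vee,\psi^{-1})=1$. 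By Theorem \ref{thm GJFE}, $\pi'$ and $\pi'^\vee$ satisfy $\mathbf{GJFE}$: they are irreducible, being factors of $\pi$ and $\pi^\vee$ in a suitable sense, or else we argue directly through multiplicativity for $\mathbf{GJFE}$. Lemma \ref{lemmgammagammatilde} then gives the required equality.

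More cleanly, I would avoid irreducibility of $\pi'$ entirely by multiplicativity, reducing to $\prod_i\gamma(\rho_i,\psi)\gamma(\rho_i^\vee,\psi^{-1})$. This product is $1$ because:
\begin{itemize}
\item each $\rho_i$ with $n_i\ge2$ satisfies $\mathbf{GJFE}$ by Proposition \ref{prop cuspidal GJFE}, as does its dual;
\item nontrivial characters of $\k^\times$ satisfy $\mathbf{GJFE}$ by Lemma \ref{lm GJ GL1};
\item Lemma \ref{lemmgammagammatilde} then applies to each pair $\rho_i$, $\rho_i^\vee$.
\end{itemize}

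The main obstacle is bookkeeping rather than substance. The factorization $\gamma(\pi',\psi)=\prod\gamma(\rho_i,\psi)$ depends only on the cuspidal support; one must check that the contragredient has cuspidal support $\{\rho_i^\vee\}$, and that the normalizations of $|\Gg|^{1/2}$ are consistent across ranks so the multiplicativity constants do not introduce stray signs.
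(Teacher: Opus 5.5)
Your proof is correct and follows the paper's argument: multiplicativity (Corollary~\ref{coro mult GJ}), the value $\gamma(\1,\psi)=-q^{-1/2}$ from Remark~\ref{rem triv gamma}, and the inversion relation \eqref{eq dist gr} for the part of the cuspidal support avoiding $\1$. You apply that relation to each pair $\rho_i,\rho_i^\vee$ rather than to $\pi'$, which is equivalent by multiplicativity, and your cleaner route rightly drops the unjustified aside that $\pi'$ is irreducible; your remark that the square roots $|\Gg_n|^{1/2}$ must be chosen compatibly across ranks for \eqref{eq GJ habile} to hold is accurate and is left implicit in the paper.
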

\begin{proof}
This follows from Corollary \ref{coro mult GJ}, Equation \eqref{eq dist gr} and Remark \ref{rem triv gamma}.
\end{proof}

\section{Gamma factors of distinguished cuspidal representations via the sign}\label{sec sym pair}

We now assume that~$R$ is a field\footnote{This is in addition to the assumption~$R$ is a~$\mathbb{Z}[\mu_p,|\Gg|^{-1/2}]$-algebra.} of characteristic different to~$p$.  We already applied the abstract setup of Section \ref{sec abstract setup} to obtain the Godement functional equation for cuspidal representations, by specifing to the symmetric pair corresponding to the so-called group case. 
Here we specialize $(G,H)$ to two other symmetric pairs, and deduce the value of gamma factors of absolutely irreducible cuspidal $H$-distinguished~$R$-representations.  In each case, we choose~$|\Gg|^{1/2}=|\Hh|$ in~$R$.

\subsection{The Galois model}

Here~$G=\GL_n(\k)$ with $n\geq 2$,~$\k$ is a quadratic extension of~$\k_0$, that~$\k_0$ has odd characteristic, 
and choose~$H=\GL_n(\k_0)$ with~$\Hh=\Mat_n(\k_0)$. With conventions from Section \ref{sec abstract setup}, this means that the role of~$\k$ there is played by~$\k_0$ here, i.e. we see~$\Gg$ as a semi-simple~$\k_0$ algebra. In particular we fix a non-trivial character~$\psi_0:\kk_0^\times \to R^\times$. The corresponding character~$\Psi$ on~$\Gg$ is thus~$\Psi=\psi_0\circ \tr_{\Gg/\k_0}=\psi_0\circ \tr_{\kk/\kk_0} \circ\tr_{\Gg/\k}$.

Let~$\pi$ be an absolutely irreducible cuspidal~$R$-representation of~$\GL_n(\k)$ distinguished by~$\GL_n(\k_0)$.  Then \ref{A1} ,\ref{A2} and \ref{A3} are satisfied.
% (and the existence of an~$H$-distinguished cuspidal irreducible
% representation implies that~$n$ is odd).
(For \ref{A2} in the modular case, see \cite{SecANT19} Remark 4.3.)

For \ref{A4},
note that~$ga=a$ is equivalent to~$(g-1)a=0$,
that is,
$g$ is of the form~$1+u$ where~$u$ is any matrix of~$\Gg$ whose kernel
contains the image of~$a$.

For \ref{A5}, we observe that~$\Hh^\bot=\delta \Hh$, where~$\delta\in \k^\times$ satisfies~$\tr_{\k/\k_0}(\delta)=0$. Note that~$\delta$ is unique up to~$\k_0^\times$-scaling. Thus, we have \ref{A5} with~$w=\delta I_n$. At this point, it is useful to remark that when one sees $G$ as a $k_0$-group, which is what we are doing to apply Proposition \ref{prop main}, then the gamma factor of $\pi$ with respect to $\psi_0$ is by definition equal to~$\gamma(\pi,\psi_0\circ \tr_{\k/k_0})$, when~$G$ is seen as a group over~$\k$. In this situation, Proposition \ref{prop main} takes the following form.

\begin{theo}\label{thm Gal gamma}
Let~$\pi$ be an absolutely irreducible cuspidal~$R$-representation of~$\GL_n(\k)$ distinguished by~$\GL_n(\k_0)$, and~$\psi_0:\kk_0\to R^\times$ a non-trivial character, and suppose that~$|M_n(\k)|^{1/2}=|M_n(\k_0)|$ in~$R$, then {\[\gamma(\pi,\psi_0\circ \tr_{\k/k_0})=\omega_{\pi}(\delta).\]}
\end{theo}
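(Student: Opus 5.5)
We apply Proposition~\ref{prop main} with the ground field $\k_0$. That is, we view $\Gg=\Mat_n(\k)$ as a separable $\k_0$-algebra, take $G=\GL_n(\k)$, $H=\GL_n(\k_0)$, $\Hh=\Mat_n(\k_0)$, $\Psi=\psi_0\circ\tr_{\Gg/\k_0}$, and $w=\delta I_n$. Since $w$ is central, it lies in the normalizer $N$ of $H$. The conclusion of Proposition~\ref{prop main} then reads $\gamma(\pi,\psi_0)=\chi_\pi(w)$, with gamma factor computed over $\k_0$. By the remark preceding the theorem, this equals $\gamma(\pi,\psi_0\circ\tr_{\k/\k_0})$ computed over $\k$: the Fourier transform and the Godement--Jacquet sums coincide, since $\tr_{\Gg/\k_0}=\tr_{\k/\k_0}\circ\tr_{\Gg/\k}$ and both normalizations use $|\Gg|^{1/2}=|\Mat_n(\k_0)|$. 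Finally, because $w$ is central, $\La\circ\pi(w)=\omega_\pi(\delta)\La$, so $\chi_\pi(w)=\omega_\pi(\delta)$. The WGJFE hypothesis is supplied by Proposition~\ref{prop WGJFE irr}, since $\pi^\vee$ is absolutely irreducible and hence satisfies Schur's lemma.

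It remains to verify \ref{A1}--\ref{A5}.
\begin{itemize}
\item[\ref{A1}] Holds by Remark~\ref{rem J vs dual}, since $\pi$ is cuspidal.
\item[\ref{A2}] $\Hom_{R[H]}(\pi,R)$ is nonzero by distinction, and it is at most one-dimensional by the multiplicity-one result for Galois pairs (in the modular case, \cite{SecANT19} Remark 4.3).
\item[\ref{A3}] Clear, since $\Mat_n(\k_0)$ is a subring containing $H$.
\item[\ref{A4}] Let $a\in\Mat_n(\k_0)\setminus\GL_n(\k_0)$. Then $a$ is singular, so $\mathrm{Im}(a)$ is a proper subspace. The stabilizer $\{g: ga=a\}$ contains all $1+u$ with $u$ killing $\mathrm{Im}(a)$. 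Choosing a basis adapted to a flag through $\mathrm{Im}(a)$ (over $\k$), this contains the unipotent radical of the parabolic stabilizing a hyperplane containing $\mathrm{Im}(a)$, which is proper since $n\ge 2$. The right stabilizer is handled symmetrically, using $\ker a\neq 0$.
\end{itemize}

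The main point to check carefully is \ref{A5}: for $a\in\Gg$, we need $\psi_0(\tr_{\Gg/\k_0}(ab))=1$ for all $b\in\Hh$ if and only if $a\in\delta\Hh$. Write $\k=\k_0\oplus\delta\k_0$ (possible since $p$ is odd, so $\delta\notin\k_0$), and correspondingly $a=a_1+\delta a_2$ with $a_i\in\Mat_n(\k_0)$. For $b\in\Hh$ we get
\[
\tr_{\Gg/\k_0}(ab)=\tr_{\k/\k_0}\bigl(\tr(a_1b)+\delta\,\tr(a_2b)\bigr)=2\tr(a_1b),
\]
using $\tr_{\k/\k_0}(\delta)=0$. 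So $a\in\Hh^\bot$ if and only if $\psi_0(2\tr(a_1b))=1$ for all $b\in\Hh$. Since $b\mapsto\tr(a_1b)$ is $\k_0$-linear, this forces $\tr(a_1 b)=0$ for all $b$, because $\psi_0$ is nontrivial and $2$ is invertible. Hence $a_1=0$ by nondegeneracy of the trace form, i.e.\ $a\in\delta\Hh$. The converse is immediate, and this also gives $|\Gg|=|\Hh|^2$.

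With \ref{A1}--\ref{A5} in place, Proposition~\ref{prop main} yields $\gamma(\pi,\psi_0\circ\tr_{\k/\k_0})=\chi_\pi(\delta I_n)=\omega_\pi(\delta)$.
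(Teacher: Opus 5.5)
Your proof is correct and follows the paper's argument. You apply Proposition~\ref{prop main} with $\Gg=\Mat_n(\k)$ viewed as a $\k_0$-algebra, $\Hh=\Mat_n(\k_0)$ and the central element $w=\delta I_n$, verify \ref{A1}--\ref{A5}, and identify the $\k_0$-gamma factor with $\gamma(\pi,\psi_0\circ\tr_{\k/\k_0})$ and $\chi_\pi(w)$ with $\omega_\pi(\delta)$, exactly as the paper does; the only addition is that you write out the computation $\Hh^\bot=\delta\Hh$ via $\k=\k_0\oplus\delta\k_0$, which the paper simply asserts.
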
 

Note that the a non-trivial character $\psi:\k\to R^\times$ is trivial on $\k_0$ if and only if it is of the form $(\psi_0\circ \tr_{\k/k_0})_{\delta}$, for $\psi_0:\kk_0\to R^\times$ a non-trivial character. Hence, in view of Equation \eqref{eq twisting psi in gamma}, we obtain the following.

\begin{coro}\label{cor triv gamma}
Let $n\geq 2$ and~$\pi$ be an absolutely irreducible cuspidal~$R$-representation of~$\GL_n(\k)$ distinguished by~$\GL_n(\k_0)$. If~$\psi$ is a non-trivial character of~$\k$ trivial on~$\k_0$, and~$|M_n(\k)|^{1/2}=|M_n(\k_0)|$  in~$R$, then \[\gamma(\pi,\psi)=1.\] 
\end{coro}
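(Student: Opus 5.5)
The plan is to derive the corollary directly from Theorem \ref{thm Gal gamma}, combined with the twisting formula \eqref{eq twisting psi in gamma} of Proposition \ref{fa A}. The first step is to write $\psi$ in the form $(\psi_0\circ\tr_{\k/\k_0})_\delta$ for some non-trivial $\psi_0:\k_0\to R^\times$ and some $\delta\in\k^\times$ with $\tr_{\k/\k_0}(\delta)=0$, as in the remark before the statement.

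To justify this, fix any non-trivial $\psi_0$. By Remark \ref{duality}, every character of $\k$ has the form $(\psi_0\circ\tr_{\k/\k_0})_t$ for a unique $t\in\k$. Such a character is trivial on $\k_0$ exactly when $\psi_0(\tr_{\k/\k_0}(tx))=\psi_0(x\,\tr_{\k/\k_0}(t))=1$ for all $x\in\k_0$, which holds if and only if $\tr_{\k/\k_0}(t)=0$. Since $\psi$ is non-trivial, $t\neq 0$, so we may take $\delta:=t$, which has trace zero.

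Next, $\pi$ is absolutely irreducible, so it satisfies Schur's lemma, has a central character $\omega_\pi$, and satisfies $\mathbf{WGJFE}$ by the remark following Proposition \ref{fa A}. Applying \eqref{eq twisting psi in gamma} to the character $\psi_0\circ\tr_{\k/\k_0}$ of $\k$ gives
\[\gamma(\pi,\psi)=\gamma(\pi,(\psi_0\circ\tr_{\k/\k_0})_\delta)=\omega_\pi(\delta)^{-1}\gamma(\pi,\psi_0\circ\tr_{\k/\k_0}).\]
Theorem \ref{thm Gal gamma} applies because the trace-zero element there is only determined up to $\k_0^\times$-scaling, and we may choose it to be this same $\delta$. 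It gives $\gamma(\pi,\psi_0\circ\tr_{\k/\k_0})=\omega_\pi(\delta)$, and hence $\gamma(\pi,\psi)=\omega_\pi(\delta)^{-1}\omega_\pi(\delta)=1$.

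The only point that needs care is consistency of normalizations: the twisting formula must be applied with $\k$ as the base field and the same choice of square root $|M_n(\k)|^{1/2}=|M_n(\k_0)|$ used in Theorem \ref{thm Gal gamma}. This is harmless, because $|\Gg|$ is the same set-theoretic quantity whether $\Gg$ is viewed over $\k$ or over $\k_0$, and the Fourier transforms agree. A small check shows that the $\k_0^\times$-ambiguity in $\delta$ causes no issue: $\omega_\pi$ is trivial on $\k_0^\times$ because $\pi$ is $\GL_n(\k_0)$-distinguished, so $\omega_\pi(\delta)$ does not depend on the choice.
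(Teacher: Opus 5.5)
Your argument is correct and is exactly the paper's: write $\psi=(\psi_0\circ\tr_{\k/\k_0})_\delta$ with $\tr_{\k/\k_0}(\delta)=0$, then combine \eqref{eq twisting psi in gamma} with Theorem \ref{thm Gal gamma} to get $\omega_\pi(\delta)^{-1}\omega_\pi(\delta)=1$. One small remark: with the paper's conventions the twisting formula actually reads $\gamma(\pi,\psi_t)=\omega_\pi(t)\gamma(\pi,\psi)$ (check it on $\GL_1$, where $\gamma(\chi,\psi_t)=\chi(t)\gamma(\chi,\psi)$), but this does not affect your conclusion, because $\delta^2\in\k_0^\times$ and $\omega_\pi$ is trivial on $\k_0^\times$, so $\omega_\pi(\delta)=\pm1$, a fact you essentially already observed.
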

Notice that, changing the choice of square root (in the normalization of the Fourier transform) only changes one side of the functional equation and hence the gamma factor in that case is~$-1$.

\subsection{The linear and twisted linear models}\label{sec linear models}

Assume that~$n=2m$. Let~$G=\GL_{2m}(\k)$ and~$\Gg=\Mat_{2m}(\kk)$. Let~$\ll$ be a quadratic extension of~$\kk$, and embed~$\ll$ in~$\Gg$ as a~$\k$-sub-algebra; this embedding is unique up to~$G$-conjugacy. Let~$\pi$ be an absolutely irreducible cuspidal~$R$-representation of~$G$ distinguished by either by~$H=\GL_m(\k) \times \GL_m(\k)$, or by~$H=\GL_m(\ll)$.

First we choose~$H=\GL_m(\k) \times \GL_m(\k)$ block diagonally embedded inside~$G$, 
with~$\Hh=\Mat_m(\k) \times \Mat_m(\k)$. In this case, we assume that , and that~$\kk$ has odd characteristic. Then \ref{A2} follows from \cite[Corollary 2.16]{SecANT19}, and one checks that \ref{A3}, \ref{A4} and \ref{A5} are satisfied, with~$w={\rm antidiag}(1_m,1_m)$.  

Then choose~$H=\GL_m(\ll)$ with~$\Hh=\Mat_m(\ll)$. Conditions \ref{A3} and \ref{A4} are satisfied. We did not find a proof of \ref{A2} in the literature, hence we provide 
a proof of it in Appendix \ref{app A}.
For \ref{A5}, we write~$\ll=\kk[\sqrt{\a}]$ for some~$\a\in\k^\times$,~$\a\notin\k^{\times2}$, 
and embed it in~$\Mat_n(\kk)$ as
\begin{equation*}
\Mat_m(\ll) = \left\{
\begin{pmatrix} x & y \\ \a y & x \end{pmatrix}
\ \Big|\ x,y \in \Mat_m(\kk)\right\}.
\end{equation*}
Then~$\Hh^\bot = w\cdot\Hh$ with~$w={\rm diag}(1_m, -1_m)$. The specialization of Proposition \ref{prop main} now gives the following statement. 

\begin{theo}\label{thm lin 1}
Let~$\pi$ be an absolutely irreducible cuspidal~$R$-representation of~$\GL_{2m}(\k)$ distinguished either by~$H=\GL_m(\k) \times \GL_m(\k)$, or by~$H=\GL_m(\ll)$ for~$\ll/\kk$ a quadratic extension, embedded as above.  Moreover, in the linear case~$H=\GL_m(\k)\times\GL_m(\k)$, suppose that~$p$ is odd.  Let~$\psi$ be a non-trivial character of~$\kk$, and choose~$|\Gg|^{1/2}=|\Hh|$ in~$R$. Then \[\g(\pi,\psi)=\chi_{\pi}(w),\] where~$w={\rm antidiag}(1_m,1_m)$ if~$H=\GL_m(\k) \times \GL_m(\k)$, and~$w={\rm diag}(1_m, -1_m)$ if~$H=\GL_m(\ll)$. 
\end{theo}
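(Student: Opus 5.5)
The plan is to deduce Theorem \ref{thm lin 1} directly from Proposition \ref{prop main}, applied with $\Gg=\Mat_{2m}(\k)$ viewed as a $\k$-algebra, $G=\GL_{2m}(\k)$ and the pair $(H,\Hh)$ chosen as in the statement. Since $\pi$ is absolutely irreducible over a field of characteristic different from $p$, it satisfies Schur's lemma, and hence so does $\pi^\vee$. By Proposition \ref{prop WGJFE irr} (or the Remark following Proposition \ref{fa A}), $\pi$ therefore satisfies $(\mathbf{WGJFE},\psi)$. It thus suffices to check \ref{A1}--\ref{A5}; the final sentence of Proposition \ref{prop main} then yields $\gamma(\pi,\psi)=\chi_\pi(w)$ with the normalization $|\Gg|^{1/2}=|\Hh|$.

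The hypotheses are checked in order. \ref{A1} holds because $\pi$ is cuspidal, so $\pi^\vee$ is cuspidal by Remark \ref{rem J vs dual}. \ref{A2} states that $\Hom_{R[H]}(\pi,R)$ is one-dimensional. Distinction shows it is nonzero. Multiplicity at most one comes from \cite[Corollary 2.16]{SecANT19} in the linear case (which is where the assumption that $p$ is odd enters) and from Appendix \ref{app A} in the twisted linear case. \ref{A3} is immediate, since $\Mat_m(\k)\times\Mat_m(\k)$ and $\Mat_m(\ll)$ are subalgebras containing $H$.

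For \ref{A4}, let $a\in\Hh\setminus H$. Then $a$ is a non-invertible element of $\Gg$, because an element of $\Hh$ invertible in $\Gg$ is invertible in the finite-dimensional subalgebra $\Hh$. So $\ker a\neq 0$ and $\mathrm{im}\,a\neq \k^{2m}$. The elements $g=1+u$ with $u\,\mathrm{im}(a)=0$ and $u$ nilpotent with image inside a suitable subspace contain the unipotent radical of the proper parabolic subgroup stabilizing a flag $0\subsetneq W\subsetneq \k^{2m}$ with $\mathrm{im}(a)\subseteq W$: its unipotent radical consists of elements $1+u$ with $u(W)=0$ and $u(\k^{2m})\subseteq W$, and these fix $a$ on the left. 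The right stabilizer is handled symmetrically using $\ker a$.

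For \ref{A5}, one computes $\Hh^\bot$ for the pairing $(a,b)\mapsto\psi(\tr(ab))$. In the linear case, $\Hh^\bot$ is the block anti-diagonal matrices, which equals $w\Hh$ for $w=\mathrm{antidiag}(1_m,1_m)$, and $w$ normalizes $H$. In the twisted case, write $b=\begin{pmatrix}p&q\\ r&s\end{pmatrix}$. The condition $\tr\left(\begin{pmatrix}x&y\\ \a y&x\end{pmatrix}b\right)=0$ for all $x,y$ gives $p+s=0$ and $r+\a q=0$, that is, $b=\begin{pmatrix}p&q\\ -\a q&-p\end{pmatrix}=\mathrm{diag}(1_m,-1_m)\begin{pmatrix}p&q\\ \a q&p\end{pmatrix}$. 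Moreover, $w=\mathrm{diag}(1_m,-1_m)$ normalizes $\GL_m(\ll)$, since conjugating by it sends $\sqrt{\a}$ to $-\sqrt{\a}$, i.e.\ it acts as Galois conjugation. In both cases a dimension count confirms $|\Gg|=|\Hh|^2$.

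The main obstacle is \ref{A2} in the twisted linear case, which is deferred to the Appendix. Among the remaining steps, the only one needing care is making sure the parabolic in \ref{A4} is proper. This holds because $a\neq 0$ is not automatic, but when $a=0$ the whole group $G$ fixes $a$ and certainly contains a proper unipotent radical. The rest is bookkeeping.
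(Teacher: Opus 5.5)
Your proposal is correct and is exactly the paper's argument: the theorem is the specialization of Proposition~\ref{prop main}, with \ref{A2} coming from \cite[Corollary 2.16]{SecANT19} in the linear case and from Appendix~\ref{app A} in the twisted case, and \ref{A1}, \ref{A3}--\ref{A5} checked directly; your stabilizer argument for \ref{A4} and your computation $\Hh^\bot=\mathrm{diag}(1_m,-1_m)\Hh$ spell out what the paper leaves as ``one checks''. Two cosmetic points: your closing remark about $a=0$ is garbled but harmless, since one can always pick $0\neq W\subsetneq \k^{2m}$ containing $\mathrm{im}(a)$ (resp.\ contained in $\ker a$); and the embedding via $\sqrt{\a}$ tacitly requires $p$ odd in the twisted case as well, exactly as in the paper.
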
 
 
\section{Gamma factors and cuspidal parameters under reduction modulo $\ell$}

We now assume that $R$ is an algebraically closed field of characteristic different from $p$. In particular $\Hb(\kk,R^\times)=\Hom(\kk,R^\times)$, and we fix a non trivial character $\psi$ there. In this section $G$, $\Gg$, and $|\Gg|^{-1/2}$ are as in Section \ref{GJFEs}.

\subsection{Reduction modulo $\ell$}\label{sec red mod ell rep}
Assume that $R$ has positive characteristic $\ell$. We denote by $W_R$ its ring of Witt vectors, which is a complete discrete valuation ring, and by $F_R$ its fraction field. We fix $K=\overline{F_R}$ an algebraic closure of $F_R$, and by $\mathfrak{o}$ the integral closure of $W_R$ inside $K$. {Then by \cite[II,2]{Serre}, the valuation on $W_R$ extends uniquely to $\frak{o}$, and moreover $\o$ has again residue field $R$ since this latter is algebraically closed. This provides the reduction map $r_\ell:\frak{o}\to R$ extending the natural map from $W_R$ to $R$.}  We observe that $\overline{\mathbb{Q}}\subseteq K$. In the following discussion, extracted from \cite{V}, we use that finite dimensional representations of $G$ can always be realized over the algebraic closure of the prime subfield of their coefficient field.

A finite dimensional representation $(\pi_K,V_K)$ of $G$ with coefficients in $K$ is always integral: the action of $G$ stabilizes an $\mathfrak{o}$-lattice $V_e$ in $V_K$. Moreover, $V_e^\vee=\Hom_{\mathfrak{o}}(V_e,\mathfrak{o})$ is a $G$-stable $\mathfrak{o}$-lattice in $V_K$. Tensoring by $R$ gives rise to a representation $(\pi_R, V_R)$ of $G$ on $V_R=V_e\otimes_{\mathfrak{o}} R$, and $(\pi_R^\vee,V_R^\vee)$ naturally identifies with the representation of $G$ induced on $V_e^\vee\otimes_{\mathfrak{o}} R$. By the Brauer--Nesbitt principle, the semi-simplification $(r_\ell(\pi_K),r_\ell(V_K))$ of $(\pi_R, V_R)$ is independent of the choice of $V_e$. Here we make the standard abuse of notation consisting of using $r_\ell$ to denote both the reduction map $r_{\ell}:\mathfrak{o}\rightarrow R$, as well as for the map on representations defined above. 

Note that, the characters $\kk\to K^\times$ are~$\mathfrak{o}^\times$ valued, and there exists a unique character~$\psi_K:\kk\to K^\times$ lifting $\psi:\kk\rightarrow R^\times$.  Indeed, as in Section \ref{GJFEs}, both arise from a character~$\psi:\kk\rightarrow \mathbb{Z}[\mu_p]^\times$ via a~$\mathbb{Z}[\mu_p]$-algebra structure on~$\mathfrak{o}$.

Finally  we normalize the Fourier transform over $K$ by  choosing $|\Gg|^{-1/2}$ in~$K$ such that $r_\ell(|\Gg|^{-1/2})$ is our fixed choice of $|\Gg|^{-1/2}$ in $R$.

\subsection{Gamma factors and reduction modulo $\ell$}

Assume that $R$ has positive characteristic.

\begin{prop}\label{prop gamma mod ell}
Let $\pi_K$ be as in Section \ref{sec red mod ell rep}, and $\pi$ be a subquotient of $\pi_R$. Then if $\pi_K$ satisfies $\mathbf{WGJFE}$, resp $\mathbf{GJFE}$, then so does $\pi$. Moreover in this situation $\g(\pi_K,\psi_K)$ belongs to $\mathfrak{o}$ and 
\[\gamma(\pi,\psi)=r_\ell(\gamma(\pi_K,\psi_K)).\] 
\end{prop}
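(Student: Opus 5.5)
The plan is to pass the functional equation from $K$ to $\mathfrak{o}$ and then reduce it to $R$.

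\emph{Step 1: integrality of $\gamma(\pi_K,\psi_K)$.} Fix a $G$-stable $\mathfrak{o}$-lattice $V_e\subseteq V_K$, with dual lattice $V_e^\vee$. For $v\in V_e$ and $v^\vee\in V_e^\vee$, the matrix coefficient $f_{v,v^\vee}$ is $\mathfrak{o}$-valued. The scalars $\psi_K$ and $|\Gg|^{-1/2}$ lie in $\mathfrak{o}$; for the latter, $|\Gg|$ is a power of $p$, hence a unit, and its chosen square root reduces to the chosen one in $R$. Consequently, for $\Phi\in\Cc(\Gg,\mathfrak{o})$ both $\widehat{\Phi}$ and the two Godement--Jacquet sums are $\mathfrak{o}$-valued. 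Since the lattices are dual, we can pick $v\in V_e$ and $v^\vee\in V_e^\vee$ with $v^\vee(v)=1$. Applying the functional equation to $\Phi=\delta_1$, in the form of Equation \eqref{eq WGJFE0}, gives
\[\gamma(\pi_K,\psi_K)=\Zz(\widehat{\delta_1},f_{v,v^\vee}^\vee)\in\mathfrak{o}.\]

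\emph{Step 2: reduction of the equation.} Apply $r_\ell$ to the identity
\[\Zz(\widehat{\Phi},f_{v,v^\vee}^\vee)=\gamma(\pi_K,\psi_K)\Zz(\Phi,f_{v,v^\vee}),\]
which holds for all $v\in V_e$, $v^\vee\in V_e^\vee$, and all $\Phi$ with values in $\mathfrak{o}$ (supported in $G$ in the weak case). Reduction commutes with the Fourier transform, because $r_\ell(\psi_K)=\psi$ and the normalizing constants correspond. Every $\Phi\in\Cc(\Gg,R)$ lifts to an $\mathfrak{o}$-valued function, preserving the support condition in the weak case. Moreover, $V_R^\vee$ identifies with $V_e^\vee\otimes R$, so every pair in $V_R\times V_R^\vee$ lifts to $V_e\times V_e^\vee$. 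Hence $\pi_R$ satisfies $(\mathbf{WGJFE},\psi)$ (resp. $(\mathbf{GJFE},\psi)$) with gamma factor $r_\ell(\gamma(\pi_K,\psi_K))$. The same argument works for every nontrivial $\psi'$, since each one lifts uniquely.

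\emph{Step 3: passing to the subquotient $\pi$.} Write $\pi=V_1/V_2$ with $V_2\leqslant V_1\leqslant V_R$. Over the field $R$ the restriction map $\Hom_R(V_R,R)\to\Hom_R(V_1,R)$ is surjective, because $V_1$ is a direct summand of $V_R$ as a vector space. Lemma \ref{sqsgamma} then transfers both the functional equation and the value of the gamma factor from $\pi_R$ to $\pi$.

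I expect no serious obstacle here. The only delicate point is Step 1, the integrality of $\gamma(\pi_K,\psi_K)$. Its proof requires choosing the test vectors in dual lattices so that $v^\vee(v)$ is a unit, which is what makes dividing by $\Zz(\delta_1,f_{v,v^\vee})$ legitimate over $\mathfrak{o}$.
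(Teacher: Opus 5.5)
Your proof is correct and follows the paper's route: you lift $\Phi$, $v$ and $v^\vee$ to $\mathfrak{o}$-valued data, use a well-chosen test datum to show that $\gamma(\pi_K,\psi_K)\in\mathfrak{o}$, reduce the functional equation modulo $\ell$ to get it for $\pi_R$, and then pass to the subquotient with Lemma \ref{sqsgamma}. The only difference is that you make explicit two steps the paper leaves implicit: the ``appropriate choice of data'' ($\Phi=\delta_1$ with $v^\vee(v)=1$) and the ``standard reduction modulo $\ell$'' argument.
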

\begin{proof}
Let $(v,v^\vee)$ belong to $V_R\times V_R^\vee$, and choose $(v_e, v_e^\vee)$ in $V_e\times V_e^\vee$ lifting the pair, then the matrix coefficient $f_{v_e,v_e^\vee}\in \Cc(G,\mathfrak{o})$ reduces to $f_{v,v^\vee}\in \Cc(G,R)$. 
Similarly, for $\Phi\in \Cc(\Gg,R)$, resp. $\Cc(\Gg,R)$, there exists $\Phi_e\in \Cc(\Gg,{\mathfrak{o}})$, resp. $\Cc(G,\mathfrak{o})$, reducing to $\Phi$. By an appropriate choice of such data in the functional equation, we infer that $\g(\pi_K,\psi_K)$ belongs to $\mathfrak{o}$. Then, by a standard reduction modulo $\ell$ argument, we deduce that $\pi_R$ satisfies $\mathbf{WGJFE}$, resp $\mathbf{GJFE}$, and that $\gamma(\pi_R,\psi)=r_\ell(\gamma(\pi_K,\psi_K)).$ We finally conclude thanks to Lemma \ref{sqsgamma}.
\end{proof}

\subsection{The parametrization of cuspidal representations}\label{sec green param}

We set $G_n=\GL_n(\k)$. We recall that an irreducible~$R$-representation~$\pi$ of~$G_n$ is called \emph{supercuspidal} if it does not occur as a subquotient of a proper parabolic induction. In particular supercuspidal representations are cuspidal, and the converse holds when~$R$ has characteristic zero. In this section, we summarize the results of \cite{Green}, \cite{James} and \cite{DJ}, and refer to \cite{MScont} for a modern exposition. They are stated in these references for~$R=\mathbb{C}$ and~$R=\overline{\mathbb{F}_\ell}$ for~$\ell$ a prime number not dividing~$q$, but they hold as stated below since irreducible~$R$-representations of finite groups can always be realized over a finite extension of the prime subfield of~$R$. 

We denote by~$\kk_n$ the unique up to isomorphism extension of~$\kk$ of degree~$n$, and we once and for all embed it in~$\Gg_n=\Mat_n(\kk)$, recalling that such an embedding is unique up to~$\GL_n(\kk)$-conjugacy. We continue with the notation~$G_n=\GL_n(\kk)$.

A character~$\xi:\k_n^\times\rightarrow R^\times$ is called \emph{regular} if its orbit \[\mathcal{O}(\xi)=\{\xi^\tau:\tau\in\Gal(\k_n/\k)\}\] under the Galois group ~$\Gal(\k_n/\k)$ has maximal size~$n$. By \cite{Green}, when~$R$ has characteristic zero, there is a surjective map
\begin{align*}
\{\text{regular characters from}\ \k_n^\times \ \text{to} \ R^\times\}&\rightarrow \{\text{cuspidal~$R$-representations of }G_n\}/\simeq\\
\xi&\mapsto \pi(\xi),
\end{align*}
The character formula given in \cite{Green} also shows the following:
\begin{enumerate}
\item Two such cuspidal representations~$\pi(\xi)$ and~$\pi(\xi')$ are isomorphic if and only if there exists $\tau\in\Gal(\k_n/\kk)$ such that~$\xi'=\xi^\tau$, 
\item The dual~$\pi(\xi)^\vee$ is isomorphic to~$\pi(\xi^{-1})$,
\item If~$\kk/\kk'$ is a Galois extension, $\tau\in \Gal(\kk/\kk')$ and $\tau_n\in \Gal(\k_n/\k')$ extends $\tau$, we have~$\rho(\xi^{\tau_n})\simeq\rho(\xi)^\tau$. 
\end{enumerate}  

We now recall the classification of cuspidal~$R$-representations of James \cite{James} when~$R$ has positive characteristic~$\ell\neq p$.

Let~$\pi_K$ be an irreducible cuspidal~$K$-representation of~$G_n$, then it is~$\mathfrak{o}$-integral, and moreover~$r_{\ell}(\pi_K)$ is irreducible and cuspidal.  A character~$\xi:\kk_n^\times\rightarrow K^\times$ uniquely decomposes as~$\xi=\xi_\ell\xi^\ell$, with~$\xi_\ell$ of order prime to~$\ell$ and~$\xi^\ell$ of order a power of~$\ell$. We say that~$\xi$ is \textit{$\ell$-regular} if~$\xi=\xi_\ell$. For~$\xi\in \Hom(\k^\times,K^\times)$, its image lies in~$\mathfrak{o}^\times$, and we set~$r_\ell(\xi)=r_{\ell}\circ \xi$. It then follows from \cite{James} that the map  

\begin{align*}
r_{\ell}:\{\text{cuspidal~$K$-representations of }G_n\}/\simeq&\rightarrow \{\text{cuspidal~$R$-representations of }G_n\}/\simeq\\
\pi_K &\mapsto r_{\ell}(\pi_K)
\end{align*} 
is surjective, giving rise, in view of the Green parametrization, to a surjection 
\begin{align*}
\{\text{regular characters from}\ \k_n^\times \ \text{to} \ K^\times\}&\rightarrow \{\text{cuspidal~$R$-representations of }G_n\}/\simeq\\
\xi&\mapsto r_{\ell}(\pi(\xi)).
\end{align*}

The parametrization of James enjoys the following properties:
\begin{enumerate}
\item For regular~$\xi$ and~$\xi'$,~$r_{\ell}(\pi(\xi'))\simeq r_{\ell}(\pi(\xi))$ if and only if there exists~$\tau\in\Gal(\kk_n/\kk)$ such that~$\xi'_\ell=\xi_\ell^\tau$, and we observe that this is the same as saying that there exists~$\tau\in\Gal(\ll/\kk)$ such that~$r_\ell(\xi')=r_\ell(\xi)^\tau$.  
\item~$r_{\ell}(\pi(\xi))$ is supercuspidal if and only if~$\xi_\ell$ is regular, or equivalently~$r_\ell(\xi)$ is regular.
\end{enumerate}

We set \[\pi(r_\ell(\xi)):=r_{\ell}(\pi(\xi)),\] for any regular character~$\xi:\kk_n^\times\rightarrow K^\times$. 

Hence the irreducible cuspidal~$R$-representations of~$G$ are naturally parametrized by (Galois orbits of) elements in~$\Hom(\k_n^\times,R^\times)$: when~$R$ 
has characteristic zero, it is the set of (Galois orbits of) regular characters whereas it is the set of characters with a regular~$\ell$-adic lift when~$R$ has positive characteristic. In both cases supercuspidal representations are parametrized by (Galois orbits of) regular characters. 

For~$\rho$ a supercuspidal representation of~$G_k$, and~$r\in \mathbb{N}-\{0\}$, we denote by~$\st_r(\rho)$ the unique generic subquotient of the parabolically induced representation~$\underbrace{\rho\times \dots \times \rho}_{r  \times}$ (see \cite[Chapitre 3, 2]{V}).  From the multiplicativity relation for gamma factors we have:
\begin{lemm}\label{strrhomult}
Let~$\rho$ a supercuspidal representation of~$G_k$, then
\[\gamma(\st_r(\rho),\psi)=\gamma(\rho,\psi)^r.\]
\end{lemm}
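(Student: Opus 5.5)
The plan is to combine the multiplicativity relation of Corollary \ref{coro mult GJ} with Lemma \ref{sqsgamma}. By definition, $\st_r(\rho)$ is a subquotient $(\pi,W_1/W_2)$ of $\rho^{\times r}=\rho\times\cdots\times\rho$ ($r$ factors), a representation of $G_{kr}$. The key steps, in order, are as follows.

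\textbf{Step 1: $\rho$ satisfies the strong functional equation.} Suppose first that $k\geq 2$. Then $\rho^\vee$ is irreducible, hence cuspidal by Remark \ref{rem J vs dual}, and it satisfies Schur's lemma because $R$ is algebraically closed. Proposition \ref{prop cuspidal GJFE} therefore gives that $\rho$ satisfies $\mathbf{GJFE}$. Now suppose $k=1$, so that $\rho=\chi$ is a character. By Lemma \ref{lm GJ GL1}, $\chi$ always satisfies $\mathbf{WGJFE}$.

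\textbf{Step 2: the hypotheses of Corollary \ref{coro mult GJ} hold.} The induced representation $\rho^{\times r}$ is a finite-dimensional vector space over the field $R$. So for any subspace $W_1$, the restriction map $\Hom_R(V_{\rho^{\times r}},R)\to\Hom_R(W_1,R)$ is surjective: this is just extension of linear forms over a field.

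\textbf{Step 3: apply Corollary \ref{coro mult GJ}.} Take all $\pi_i=\rho$. In the case $k\geq 2$, every factor satisfies $\mathbf{GJFE}$, so $\st_r(\rho)$ satisfies $\mathbf{GJFE}$ and $\gamma(\st_r(\rho),\psi)=\gamma(\rho,\psi)^r$. In the case $k=1$, the corollary is applied with $\mathbf{WGJFE}$ throughout, which yields the same product formula. Here $\gamma$ is the $\mathbf{WGJFE}$ constant, which agrees with $\gamma^K$ by Proposition \ref{fct WGJFE}, since $\st_r(\rho)^\vee$ is irreducible and satisfies Schur's lemma.

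\textbf{Expected obstacle.} The only delicate point is making sure the gamma factor of $\st_r(\rho)$ is well defined and independent of which functional equation it is read off from. In the weak case this is handled by the identification with $\gamma^K$ in Proposition \ref{fct WGJFE}. The cuspidal versus supercuspidal distinction plays no role here: only the cuspidality of $\rho$ is used in Step 1.
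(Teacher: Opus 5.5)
Your proof is correct and is essentially the paper's argument, which simply invokes the multiplicativity relation (Corollary \ref{coro mult GJ}) for the subquotient $\st_r(\rho)$ of $\rho\times\cdots\times\rho$. The case split is unnecessary, since every irreducible $R$-representation already satisfies $\mathbf{WGJFE}$ (Theorem \ref{thm GJFE}); also, in Step 1 the cuspidality of $\rho^\vee$ comes from that of $\rho$ via Remark \ref{rem J vs dual}, not from the irreducibility of $\rho^\vee$.
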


We now recall from \cite{DJ} the parametrization of cuspidal representations of~$G$ in terms of supercuspidal representations when $R$ has positive characteristic, and relate it to the Green-James parameters as explained in \cite{V}.

For~$f$ a positive integer dividing~$n$, we set~$r=n/f$, let~$o(q^f)$ denote the order of~$q^f$ in~$\k^\times$ and~
\begin{equation*}
e(q^f):=\begin{cases}
o(q^f)&\text{if~$o(q^f)\neq 1$};\\
\ell&\text{otherwise.}
\end{cases}
\end{equation*}
%
%$e(q^f)$ to be the order of~$q^f$ in~$k^\times$ if the latter is not equal to one, and~$e(q^f)=\ell$ otherwise. 
\begin{prop}
	Assume that $R$ has positive characteristic.
\begin{enumerate}
\item Let~$\rho$ be an irreducible supercuspidal~$R$-representation of~$G_f=\GL_f(\kk)$.  If~$r=e(q^f)\ell^a$ for~$a\in \mathbb{N}$, then~$\st_r(\rho)$ is cuspidal.
\item Conversely, if~$\pi$ is an irreducible cuspidal~$R$-representation of~$G_n$, there exists a unique~$r$ equal to one (supercuspidal case) or~$e(q^f)\ell^a$ as above, and a unique supercuspidal representation~$\rho$ of~$G_f=\GL_f(\kk)$ such that~$\pi=\st_r(\rho)$.
\item Let~$\st_r(\rho)$ be an irreducible cuspidal~$R$-representation of~$G_n$.  
\begin{enumerate}
\item For an automorphism~$\tau$ of~$\kk$, we have~$\st_r(\rho)^\tau\simeq \st_r(\rho^\tau)$.
\item We have~$\st_r(\rho)^{\vee}\simeq \st_r(\rho^\vee)$.
\end{enumerate}
In particular,~$\st_r(\rho)$ is self-dual (resp.~$\sigma$-self-dual in the Galois setting) if and only if~$\rho$ is self-dual (resp.~$\sigma$-self-dual).
\end{enumerate}
\end{prop}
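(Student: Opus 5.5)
Parts (1) and (2) are the Dipper--James classification of cuspidal representations in positive characteristic, which the paper recalls rather than reproves. I would derive them from \cite{DJ} and \cite{V}, using the Green--James parametrisation above, and then check part (3) directly.

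\emph{Parts (1) and (2).} For (1), start from a supercuspidal $\rho=\pi(r_\ell(\eta))$ with $\eta:\kk_f^\times\to K^\times$ regular and $\ell$-regular. Look for characters $\xi:\kk_n^\times\to K^\times$ with $\xi_\ell=\eta\circ N_{\kk_n/\kk_f}$ and $\xi$ regular. I expect such a $\xi$ to exist exactly when the $\ell$-part of $|\kk_n^\times|/|\kk_f^\times|$ allows an $\ell$-power character $\xi^\ell$ whose stabiliser in $\Gal(\kk_n/\kk_f)$ is trivial. Unwinding the valuation $v_\ell(q^{n}-1)-v_\ell(q^f-1)$ should give the condition $r=e(q^f)\ell^a$. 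For such $\xi$, James' theory identifies $r_\ell(\pi(\xi))$ with the generic subquotient $\st_r(\rho)$ of $\rho^{\times r}$. Indeed, it is cuspidal, and its cuspidal support is $r$ copies of $\rho$ by Brauer character comparison: Green's character formula restricted to $\ell$-regular elements depends only on $\xi_\ell$. It is also generic, because reductions of cuspidals are generic and genericity is preserved under reduction.

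For (2), given a cuspidal $\pi=r_\ell(\pi(\xi))$, let $f$ be the degree of regularity of $\xi_\ell$, so that $\xi_\ell$ factors through $N_{\kk_n/\kk_f}$ as a regular character $\eta$ of $\kk_f^\times$. Put $\rho=\pi(r_\ell(\eta))$. The same argument gives $\pi\simeq\st_r(\rho)$, and the valuation computation forces $r\in\{1\}\cup\{e(q^f)\ell^a\}$. Uniqueness follows from uniqueness of supercuspidal support. The main obstacle is this $\ell$-adic regularity count, namely showing that regular lifts exist precisely for these $r$. It is the heart of \cite{DJ}, and I would cite it rather than redo it.

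\emph{Part (3).} This is formal. Twisting by $\tau$ and dualising are exact autoequivalences. They commute with parabolic induction up to isomorphism, because $(\rho^{\times r})^\vee\simeq(\rho^\vee)^{\times r}$ as finite-group induction commutes with contragredients. They also preserve genericity: for duals, use the Whittaker character $\psi$ and note that $\pi^\vee$ is $\psi^{-1}$-generic, which is equivalent to $\psi$-generic after conjugating by a diagonal element. Since $\st_r(\rho)$ is characterised as the unique generic subquotient of $\rho^{\times r}$, we get $\st_r(\rho)^\tau\simeq\st_r(\rho^\tau)$ and $\st_r(\rho)^\vee\simeq\st_r(\rho^\vee)$. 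The final equivalence then follows from the uniqueness of $\rho$ in (2).
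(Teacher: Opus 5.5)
Your proposal is correct and matches the paper. The paper gives no proof of its own: it recalls the whole statement from Dipper--James and Vign\'eras. The remark right after it ($f=|\mathcal{O}(\xi_0)|$, $\xi_0=\xi_0'\circ N_{\kk_n/\kk_f}$ with $\xi_0'$ regular, $\pi(\xi_0)\simeq\st_r(\pi(\xi_0'))$) is exactly your description of $\rho$ through the $\ell$-regular part of the Green--James parameter. Your derivation of (3) from the characterisation of $\st_r(\rho)$ as the unique generic subquotient of $\rho^{\times r}$ is the standard argument, which the paper leaves implicit.

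There are two small slips in your sketch. First, a regular lift exists if and only if the order of $q^f$ modulo $\ell^{v_\ell(q^n-1)}$ is exactly $r$. This depends on all the $v_\ell(q^{fd}-1)$ for $d\mid r$, not only on $v_\ell(q^n-1)-v_\ell(q^f-1)$: for $\ell$ odd, $q^f\equiv 1 \bmod \ell$ and $r=2\ell$, that difference is $1$ but there is no regular lift. Second, for $r>1$ it is the \emph{supercuspidal} support of $r_\ell(\pi(\xi))$, not its cuspidal support, that consists of $r$ copies of $\rho$.
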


Moreover, it follows from the proof \cite[Chapitre 2, 2.8, Th\'eor\`eme, a)]{V} that if~$\pi=\pi(\xi_0)$, then~$f=|\mathcal{O}(\xi_0)|,$ and if one defines~$\xi_0':\kk_f^\times \to k^\times$ by the relation 
\[\xi_0=\xi_0'\circ N_{\kk_n/\kk_f},\] then~$\xi_0'$ is regular and~$\pi(\xi_0)\simeq\st_r(\pi(\xi_0'))$.
\\
\section{Gamma factors of self-dual and Galois self-dual cuspidal representations}\label{computation}%, 

We continue with $R$ an algebraically closed field of characteristic different from $p$, and we return to the notation~$G_n=\GL_n(\k)$. When $R$ has positive characteristic, we define $K$ and $\mathfrak{o}$ as in Section \ref{sec red mod ell rep}. As before we choose square roots so that~$|\Gg_n|^{1/2}=|\Hh_n|$ in $R$ (and $K$ when $R$ has positive characteristic). 
In this section, we compute the gamma factors of irreducible cuspidal self-dual~$R$-representation of~$\GL_{2m}(\k)$, and irreducible cuspidal~$\sigma$-self-dual~$R$-representations of~$\GL_{n}(\k)$ (where~$\k/\k_0$ is quadratic) in terms of the Green and Dipper--James parametrization of cuspidal representations.  We reduce to this computation to the supercuspidal setting, where noting self-dual and~$\sigma$-self dual supercuspidals are distinguished, we give two methods: one based on the results of Section \ref{sec sym pair}, and the other based on Kondo's formula (Lemma \ref{Kondo}). 

\subsection{Results on distinguished cuspidal representations}

We have the following result:
\begin{prop}[{\cite[Lemmas 2.5, 2.19]{SecANT19}, Corollary \ref{cor glfd sefdual tw lin}, Proposition \ref{cor glfd sefdual tw lin}, \cite[Corollary 5.4]{MR4089564}}]\label{distsdssd}
Let~$\pi$ be an irreducible cuspidal~$R$-representation of~$G_n$.
\begin{enumerate}
	\item (Galois case) $\k$ is a quadratic extension of~$\k_0$,~$H_n=\GL_n(\k_0)$, and~$\Gal(\k/\k_0)=\langle \sigma\rangle$.  \begin{enumerate}
		\item If $\pi$ is $H_n$-distinguished, then  it is~$\sigma$-self-dual.  
		\item If moreover~$\pi$ is supercuspidal, then the converse implication also holds.
	\end{enumerate}
	\item (Linear/twisted-linear case)~$n=2m$,~$H_n=\GL_m(\k)\times\GL_m(\k)$ (embedded diagonally)-- in which case we suppose that~$p$ is odd, or~$H_n=\GL_m(\ll)$ for~$\ll/\k$ quadratic.
	\begin{enumerate}
		\item If $\pi$ is $H_n$-distinguished, then  it is~self-dual.  
		\item \label{finaldistsdssd} If moreover~$\pi$ is supercuspidal, then the converse implication also holds.
	\end{enumerate}
\end{enumerate}
\end{prop}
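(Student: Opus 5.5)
The plan is to argue first over $K$ (characteristic zero, or $R$ itself when $\mathrm{char}(R)=0$), and then pass to positive characteristic by reduction modulo $\ell$ as in Section \ref{sec red mod ell rep}.

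\textbf{Step 1: the implications (a) in characteristic zero.} I would use a Gelfand--Kazhdan type argument. Let $\theta$ be the involution of $G_n$ with $H_n=G_n^\theta$:
\begin{itemize}
\item in the Galois case $\theta=\sigma$;
\item in the linear case $\theta$ is conjugation by $\diag(1_m,-1_m)$;
\item in the twisted linear case $\theta$ is conjugation by an element generating $\ll$.
\end{itemize}
Consider the anti-involution $\iota(g)=\theta(g)^{-1}$. Suppose every $(H_n,H_n)$-double coset is $\iota$-stable; this is the standard description of $H_n\backslash G_n/H_n$ through the $\theta$-twisted classes of $g\theta(g)^{-1}$. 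Then the Hecke algebra $R[H_n\backslash G_n/H_n]$ is commutative and $(G_n,H_n)$ is a Gelfand pair. Now take a nonzero spherical matrix coefficient $f(g)=\La(\pi(g)v_0)$, with $v_0$ the $H_n$-fixed vector dual to $\La$. It is bi-$H_n$-invariant, so $f\circ\iota=f$. This identifies $\pi$ with $(\pi^{\vee})^{\theta}$. In the linear and twisted linear cases $\theta$ is inner, so this gives $\pi\simeq\pi^\vee$. In the Galois case it gives $\pi^\sigma\simeq\pi^\vee$.

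\textbf{Step 2: the converses (b) in characteristic zero.} Here I would use Green parameters from Section \ref{sec green param}. A cuspidal $\pi=\pi(\xi)$ is self-dual (resp. $\sigma$-self-dual) exactly when $\xi^{-1}$ (resp. $\xi^{-1}\circ\sigma_n$, for an extension $\sigma_n$ of $\sigma$) lies in the Galois orbit of $\xi$. I would then compute $\dim\Hom_{H_n}(\pi,\mathbb C)=\langle \mathrm{ch}_\pi|_{H_n},1\rangle_{H_n}$ from Green's character formula, in the style of Gow and Hakim--Murnaghan. This amounts to summing the formula over the elliptic tori of $H_n$ that meet the anisotropic torus $\k_n^\times$; the answer should be $1$ under the (twisted) self-duality condition.

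\textbf{Step 3: positive characteristic.} Suppose $\pi$ is distinguished. Lift $\pi$ to a cuspidal $\pi_K$ and take a projective envelope argument: the $H_n$-invariant form on $\pi$ comes from the $H_n$-coinvariants of an $\o$-lattice, using that $|H_n|$ may be non-invertible. Then (a) is deduced from self-duality of a suitable lift, or directly, since Step 1 never used characteristic zero except in the Hecke algebra argument, and that argument works over any field once multiplicity one (\ref{A2}) is known. This is the route I prefer.

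For (b), let $\pi=\pi(r_\ell(\xi))$ be supercuspidal and (Galois) self-dual. Replace $\xi$ by its $\ell$-regular part $\xi_\ell$. By James this is still regular, it has the same reduction, and its (Galois) self-duality is inherited from that of $r_\ell(\xi)$, because reduction is injective on $\ell$-regular characters. Then $\pi(\xi_\ell)$ is distinguished by Step 2. An $H_n$-invariant form on $V_K$ restricts, after scaling, to a nonzero $\o$-valued form on a lattice $V_e$, and it reduces to a nonzero $H_n$-invariant form on $\pi_R=\pi$; here $\pi_R$ is irreducible.

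\textbf{Main obstacle.} I expect the hardest step to be Step 2, the character-sum computation proving distinction of (twisted) self-dual supercuspidals, especially in the twisted linear case $H_n=\GL_m(\ll)$. For the Galois and linear cases I would cite \cite{SecANT19}. For the twisted linear case I would first try to deduce it from the linear case by a Deligne--Lusztig orthogonality argument comparing the two forms, as presumably done in Appendix \ref{app A}.
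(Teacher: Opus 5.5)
\textbf{There is a genuine gap: your Step~1 fails in the linear case, and the twisted linear case of (b) has no argument.}

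\textbf{The linear case of (a).} Take $H_n=\GL_m(\k)\times\GL_m(\k)$. Since $\diag(1_m,-1_m)\in H_n$, you have $H_n\iota(g)H_n=H_ng^{-1}H_n$, so your hypothesis says every double coset is stable under inversion. But this pair is \emph{not} a Gelfand pair over a finite field. The paper recalls this in Appendix~\ref{app B}: for $m=1$ the Steinberg representation of $\GL_2(\k)$ has a two-dimensional space of forms invariant under the diagonal torus $T$. By Gelfand's trick, no anti-involution can then fix all double cosets. Explicitly, for $m=1$ and $g=\begin{pmatrix}0&1\\1&1\end{pmatrix}$, every element of $TgT$ has zero $(1,1)$-entry, whereas $g^{-1}=\begin{pmatrix}-1&1\\1&0\end{pmatrix}$ does not. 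So $f\circ\iota=f$ is unavailable.

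Your modular Step~3 inherits the problem whichever way you argue:
\begin{itemize}
\item the ``direct'' route simply reuses Step~1;
\item the distinguished characteristic-zero lift produced by a projective-envelope argument is in general not cuspidal (see the proof of Proposition~\ref{prop cusp lin sd}), and for such a lift Step~1 gives nothing, since it relies on the Gelfand property.
\end{itemize}
The missing input is the one the paper uses in Proposition~\ref{prop cusp lin sd}. First lift distinction to an irreducible, possibly non-cuspidal, $H_n$-distinguished $\pi_\ell$ with $\pi$ a constituent of $r_\ell(\pi_\ell)$. Then apply Kapon's theorem \cite{Kapon}: distinguished irreducible representations are self-dual in characteristic zero. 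Finally, descend self-duality to $\pi=\st_r(\rho)$ through the cuspidal support. Alternatively, you would have to show, using cuspidality of $\pi$ and $\pi^\vee$, that your spherical function is $\iota$-invariant even on the non-stable double cosets. For $m=1$ this follows by averaging over a unipotent radical, but in general it is an additional argument you do not supply.

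\textbf{The twisted linear case of (b).} For $H_n=\GL_m(\ll)$ in characteristic zero you give no argument, and Appendix~\ref{app A} does not contain one. That appendix shows the double cosets are fixed by $g\mapsto g^{-1}$ (adapting \cite{Guo} and \cite{BM}; this is a genuine input, not something to ``suppose''). It then applies Prasad's argument, which yields \ref{A2} and direction (a), i.e.\ essentially your Step~1 in that case. It says nothing about the converse, which the paper takes in characteristic zero from \cite[Corollary 5.4]{MR4089564}.

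\textbf{What is fine.} The following match the paper:
\begin{itemize}
\item citing \cite{SecANT19} for the Galois and linear cases of (b);
\item your reduction modulo $\ell$ for (b): pass to $\xi_\ell$, lift self-duality via injectivity of $r_\ell$ on $\ell$-regular characters, then reduce an $\o$-integral invariant form.
\end{itemize}

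\textbf{A small correction to modular Step~1.} An $H_n$-fixed vector ``dual to $\La$'' need not exist when $\ell$ divides $|H_n|$. The argument only needs some nonzero $v_0\in V^{H_n}$. This exists because $\pi^\vee\simeq\pi\circ({}^t(\cdot)^{-1})$ and $H_n$ is stable under a conjugate of transpose. Multiplicity one \ref{A2} plays no role there.
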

\begin{proof}
All statements follow directly from the references, except the final statement \eqref{finaldistsdssd} for the twisted-linear case.  The reference \cite[Corollary 5.4]{MR4089564} proves this when $R$ has characteristic zero. When $R$ has positive characteristic, it follows from \cite{SecANT19} that under reduction modulo~$\ell$, self-duality lifts for supercuspidal representations, whereas distinction always descends, so we deduce statement \eqref{finaldistsdssd} in the positive characteristic case.
\end{proof}

\subsection{Explicit formula for gamma factors of irreducible distinguished supercuspidal~$R$-representations using the sign}\label{sec dist gamma 2}
Suppose that~$\kk=\k_0$ has odd characteristic. Let~$\pi(\xi)$ be an irreducible self-dual cuspidal~$R$-representation of~$G_n=\GL_n(\kk)$, with~$n\geq 2$. We are going to compute~$\g(\pi,\psi)$ in terms of~$\xi$, using distinction of the supercuspidal support of~$\pi$. % in the case of linear and twisted linear models. 
We first recall the following results, which follows from \cite[(2.7)]{SecANT19}.

\begin{prop}\label{prop supercusp dist}
Assume that~$q$ is odd, and let~$\pi(\xi)$ be an irreducible supercuspidal~$R$-representation of~$G_n$. 
\begin{enumerate}
\item Suppose~$n\geq 2$, then~$\pi(\xi)$ is self-dual if and only if~$n=2m$ is even and~$\xi$ restricts trivially to~$\k_m^\times$.
\item If~$n=1$,~$\pi=\xi$ is self-dual if and only if~$\xi^2=1$.
\end{enumerate}% if and only if~$n=2m$ is even and~$\pi(\xi)$ is~$\GL_m(\k)\times \GL_m(\k)$-distinguished.
\end{prop}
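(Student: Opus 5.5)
The plan is to reduce self-duality of $\pi(\xi)$ to a statement about the Galois orbit of $\xi$, and then solve that statement in the cyclic group $\k_n^\times$.

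\textbf{Step 1: reformulate self-duality.} By the properties of the Green parametrization recalled in Section \ref{sec green param}, we have $\pi(\xi)^\vee\simeq\pi(\xi^{-1})$. We also have $\pi(\xi)\simeq\pi(\xi')$ if and only if $\xi'$ lies in the $\Gal(\k_n/\k)$-orbit of $\xi$. In positive characteristic the same holds with $r_\ell$ of the parameters, using James' parametrization and the fact that $\xi$ itself is regular in the supercuspidal case. Hence $\pi(\xi)$ is self-dual if and only if $\xi^{-1}=\xi^{\tau}$ for some $\tau\in\Gal(\k_n/\k)$. For $n=1$ this says exactly $\xi^2=1$, which is part (2).

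\textbf{Step 2: identify $\tau$.} Assume $n\ge 2$ and $\xi^{\tau}=\xi^{-1}$. Then $\xi^{\tau^2}=\xi$. Since $\xi$ is regular, its stabilizer in $\Gal(\k_n/\k)$ is trivial, so $\tau^2=1$. We cannot have $\tau=1$: that would give $\xi^2=1$, so $\xi$ would factor through $\k_n^\times/\k_n^{\times 2}$, which has order $2$ because $q$ is odd. Such a character is $\Gal$-invariant, contradicting regularity for $n\ge2$. Hence $\tau$ has order $2$, so $n=2m$ is even and $\tau$ is the Frobenius $x\mapsto x^{q^m}$. The condition $\xi^{\tau}=\xi^{-1}$ then reads $\xi(x^{q^m+1})=1$ for all $x$. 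Since $x\mapsto x^{q^m+1}$ is the norm $N_{\k_n/\k_m}$, which is surjective onto $\k_m^\times$, this is equivalent to $\xi|_{\k_m^\times}=1$.

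\textbf{Step 3: the converse.} Conversely, if $n=2m$ and $\xi$ is trivial on $\k_m^\times=N_{\k_n/\k_m}(\k_n^\times)$, then $\xi^{q^m}=\xi^{-1}$, and Step 1 gives self-duality. This completes part (1).

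The only delicate point is the exclusion of $\tau=1$ in Step 2. That is where oddness of $q$ enters, since it guarantees that $\k_n^{\times}/\k_n^{\times2}$ has order exactly $2$. The modular case needs only the remark that supercuspidal parameters are genuinely regular characters with values in $R$, so the argument runs verbatim.
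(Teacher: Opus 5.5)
Your proof is correct. The paper does not actually argue this proposition; it simply cites \cite[(2.7)]{SecANT19}.

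You instead derive it directly from the properties of the Green and James parametrizations recalled in Section~\ref{sec green param}: isomorphism classes correspond to Galois orbits of parameters, $\pi(\xi)^\vee\simeq\pi(\xi^{-1})$, and supercuspidality is equivalent to regularity of the ($R$-valued) parameter. With these you reduce self-duality to the existence of $\tau\in\Gal(\k_n/\k)$ with $\xi^\tau=\xi^{-1}$. You then solve this using regularity, the cyclicity of $\Gal(\k_n/\k)$, and surjectivity of $N_{\k_n/\k_m}$. What this buys is a self-contained argument that treats characteristic zero and characteristic $\ell$ uniformly. In the modular case you implicitly use that reduction modulo $\ell$ commutes with contragredients. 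This is supplied by the remark in Section~\ref{sec red mod ell rep} that $V_e^\vee\otimes_{\mathfrak{o}}R$ realizes $\pi_R^\vee$; it would be worth saying so explicitly.

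One correction to your closing remark: oddness of $q$ is not really what excludes $\tau=1$. If $\xi^2=1$ and $q$ is odd, then $\xi^q=\xi$. If $q$ is even, then $\k_n^\times$ has odd order, so $\xi^2=1$ forces $\xi=1$. In either case $\xi$ is Frobenius-invariant, which contradicts regularity when $n\geq 2$. So your argument proves the statement for every $q$. The hypothesis that $q$ is odd comes from the surrounding context (linear models) rather than from this proposition.
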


%We are now in a position to prove the main result of this section. 
We denote by~$\d$ any element in~$\k_n^\times$ such that~$\tr_{\k_n/\k_m}(\delta)=0$, or equivalently such that~$\delta \k_m^\times$ is the unique order two element in~$\k_n^\times/\k_m^\times$.

\begin{prop}\label{sccomp}
Assume that~$q$ is odd and~$n=2m\geq 2$. Let~$\pi=\pi(\xi)$ be an irreducible self-dual supercuspidal~$R$-representation of~$G_n$.  Then
\[\g(\pi,\psi) =-\xi(\d).\]
\end{prop}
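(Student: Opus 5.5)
The plan is to compute $\g(\pi,\psi)$ directly from Kondo's formula and reduce it to a classical Gauss sum over $\kk_n$. I would first treat the case where $R$ has characteristic zero, and then pass to positive characteristic by reduction modulo~$\ell$ using Proposition~\ref{prop gamma mod ell}. The first step is to recall Kondo's evaluation of the non-abelian Gauss sum of a cuspidal representation in terms of its Green parameter. For $\pi=\pi(\xi)$ cuspidal, with our normalization $|\Gg_n|^{1/2}=q^{n^2/2}$, it reads
\[\g(\pi(\xi),\psi)=(-1)^{n-1}q^{-n/2}\sum_{x\in\kk_n^\times}\xi^{-1}(x)\,\psi(\tr_{\kk_n/\kk}(x)),\]
possibly up to replacing $\xi$ by $\xi^{-1}$. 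That ambiguity will be harmless, because the final answer $\xi(\d)$ satisfies $\xi(\d)=\pm1$ (see the last step).

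The core computation is the Gauss sum. By Proposition~\ref{prop supercusp dist}, self-duality gives $n=2m$ and $\xi|_{\kk_m^\times}=1$. Regularity of $\xi$ forces $\xi\neq1$, so $\xi$ is a non-trivial character of the quotient $\kk_n^\times/\kk_m^\times$. Writing $\tr_{\kk_n/\kk}=\tr_{\kk_m/\kk}\circ\tr_{\kk_n/\kk_m}$ and setting $\psi_m=\psi\circ\tr_{\kk_m/\kk}$, which is non-trivial, I would split the sum over cosets $c\,\kk_m^\times$:
\[\sum_{x\in\kk_n^\times}\xi^{-1}(x)\psi(\tr x)=\sum_{c\in\kk_n^\times/\kk_m^\times}\xi^{-1}(c)\sum_{a\in\kk_m^\times}\psi_m\bigl(a\,\tr_{\kk_n/\kk_m}(c)\bigr).\]
The inner sum equals $q^m-1$ when $\tr_{\kk_n/\kk_m}(c)=0$, which happens exactly for the coset $c\in\d\kk_m^\times$, and equals $-1$ otherwise. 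Since $\sum_c\xi^{-1}(c)=0$, the total is $q^m\xi^{-1}(\d)$.

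To finish, note that $\d^2\in\kk_m^\times$, because $\d\kk_m^\times$ has order two. Hence $\xi(\d)^2=1$ and $\xi^{-1}(\d)=\xi(\d)$. Multiplying by $(-1)^{2m-1}q^{-m}$ then gives $\g(\pi,\psi)=-\xi(\d)$.

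For $R$ of characteristic $\ell>0$, write $\pi=\pi(r_\ell(\tilde\xi))$ with $\tilde\xi$ a regular $K$-valued character. I would choose $\tilde\xi$ so that it is again trivial on $\kk_m^\times$. This should be possible by the lifting of self-duality for supercuspidals used in the proof of Proposition~\ref{distsdssd}. Alternatively, one can take the $\ell$-regular part, which is regular since $\pi$ is supercuspidal and still trivial on $\kk_m^\times$ because $r_\ell(\tilde\xi)$ is. Proposition~\ref{prop gamma mod ell} then gives $\g(\pi,\psi)=r_\ell(-\tilde\xi(\d))=-\xi(\d)$.

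I expect the main obstacle to be pinning down the precise sign and normalization in Kondo's formula, namely the factor $(-1)^{n-1}$ and whether $\xi$ or $\xi^{-1}$ appears, so that they match the conventions of \eqref{eq BK gamma} and the chosen square root. As a consistency check I would test $n=2$ against the sign computation of Theorem~\ref{thm lin 1}.
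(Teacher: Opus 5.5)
Your proposal is correct and is essentially the paper's second proof of this statement, given in its section on Kondo's formula. There, Lemma \ref{Kondo} reduces $\gamma(\pi,\psi)$ to $-\gamma(\xi,\psi)$, and the Gauss sum is evaluated by exactly your decomposition into cosets of $\k_n^\times/\k_m^\times$, the only trace-zero coset being $\delta\k_m^\times$. The paper's first proof instead cites Coniglio in characteristic zero and then reduces modulo $\ell$ using a self-dual supercuspidal lift. Your $\ell$-regular-part argument supplies such a lift explicitly. Alternatively, the Gauss sum computation only uses that $\xi$ is non-trivial and trivial on $\k_m^\times$, so it can be run directly with the $R$-valued parameter, which is what the paper's alternative proof does.
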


\begin{proof}
If~$R$ has characteristic zero, the result follows from Proposition \ref{prop supercusp dist} and \cite[Lemme 3.4.12, Lemme 4.3.13]{Coniglio}. When~$R$ has positive characteristic and~$\pi$ is supercuspidal, the result follows from the characteristic zero case and Proposition \ref{prop gamma mod ell}, as a self-dual supercuspidal~$R$-representation has a self-dual supercuspidal lift.
\end{proof}

\begin{theo}\label{thm dist gamma green}
Assume that~$q$ is odd and~$n=2m\geq 2$. Let~$\pi=\pi(\xi)$ be an irreducible self-dual cuspidal~$R$-representation of~$G$. Write~$\pi$ under the form~$\pi=\st_r(\rho)$ for an irreducible supercuspidal~$R$-representation~$\rho=\pi(\xi')$, i.e.~$\xi=\xi'\circ N_{\k_n/\k_f}$ with~$fr=n$ and~$\xi'$ regular. Then:
%\begin{enumerate}
%\item If~$\pi$ admits a linear or  twisted linear model, as in Section \ref{sec linear models}, then~$\pi$ is self-dual.
%\item Assume that~$\pi$ is self-dual.
\begin{enumerate}
\item If~$f\geq 2$, then~$\g(\pi,\psi) = (-\xi'(\d))^r$, i.e.,~
\begin{align*}
\g(\pi,\psi)= \begin{cases}-\xi(\d)&\text{ when~$r$ odd;}\\
1&\text{ when~$r$ is even. }\end{cases}\end{align*}
\item If~$f=1$, either~$\pi\simeq \st_n(\mathbf{1})$, and we have
\[\gamma(\st_n(\mathbf{1}),\psi)=q^{-m};\]
or~$\ell\neq 2$,~$\pi\simeq \st_n(\xi)$ for the non-trivial quadratic character~$\xi:\kk^\times \rightarrow R^\times$, and
\[\gamma(\st_n(\xi),\psi)=\xi(-1)^m.\]
%\begin{align*}
%g(\pi,\psi)=\begin{cases}
%q^m&\text{if~$\pi
% \xi'(-1)^m
%\end{cases}
%\end{align*}
%\end{enumerate} 
\end{enumerate}
\end{theo}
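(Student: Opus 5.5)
The plan is to reduce to the supercuspidal case via Lemma \ref{strrhomult}, which gives $\gamma(\pi,\psi)=\gamma(\rho,\psi)^r$ for $\pi=\st_r(\rho)$. Since $\pi$ is self-dual, part (3)(b) of the Dipper--James proposition shows that $\rho=\pi(\xi')$ is self-dual. We then split according to $f$.

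\emph{Case $f\geq 2$.} By Proposition \ref{prop supercusp dist}, $f=2m'$ is even and $\xi'$ is trivial on $\k_{m'}^\times$. Proposition \ref{sccomp}, applied to $\rho$ on $\GL_f(\k)$, gives $\gamma(\rho,\psi)=-\xi'(\delta')$, where $\delta'\in\k_f^\times$ has $\tr_{\k_f/\k_{m'}}(\delta')=0$. Hence $\gamma(\pi,\psi)=(-\xi'(\delta'))^r$.

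It remains to compare with $\xi(\delta)=\xi'(N_{\k_n/\k_f}(\delta))$, where $\delta\in\k_n^\times$ has $\delta^{q^m}=-\delta$. Write $\sigma$ for the nontrivial automorphism of $\k_f/\k_{m'}$, i.e.\ $x\mapsto x^{q^{m'}}$. Then $N_{\k_n/\k_f}(\delta)^\sigma=N_{\k_n/\k_f}(\delta^{q^{m'}})$, and since $\delta^{q^{m'}}$ differs from $\delta^{q^m}=-\delta$ by a Frobenius power of $\k_n/\k_f$, this equals $N_{\k_n/\k_f}(-\delta)=(-1)^rN(\delta)$. So for $r$ odd, $N(\delta)$ is a valid choice of $\delta'$ modulo $\k_{m'}^\times$; as $\xi'$ is trivial on $\k_{m'}^\times$, we get $\xi'(\delta')=\xi(\delta)$ and $\gamma(\pi,\psi)=-\xi(\delta)$.

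For $r$ even, $(-\xi'(\delta'))^r=\xi'(\delta'^2)^{r/2}$. Since $\delta'^2\in\k_{m'}^\times$ (because $\sigma(\delta')=-\delta'$), this is $1$. This gives both forms of (a); the only work here is the norm bookkeeping, which I expect to be routine once $\sigma$ is made explicit.

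\emph{Case $f=1$.} Here $\rho=\xi'$ is a self-dual character of $\k^\times$, so $\xi'^2=1$, and $r=n=2m$. If $\xi'=\mathbf{1}$, Remark \ref{rem triv gamma} gives $\gamma(\mathbf 1,\psi)=-q^{-1/2}$. Raising to the power $2m$ yields $q^{-m}$, with no sign ambiguity from the choice of $q^{1/2}$, since the power is even.

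If $\xi'$ is the nontrivial quadratic character, which requires $\ell\neq2$ so that $\xi'\neq\mathbf 1$ in $R$, Remark \ref{rem triv gamma} gives $\gamma(\xi',\psi)^2=\xi'(-1)$. Hence $\gamma(\pi,\psi)=\xi'(-1)^m$.

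\emph{Main obstacle.} The subtle point is applying Lemma \ref{strrhomult} when $\st_r(\rho)$ involves the trivial character, since $\mathbf 1$ only satisfies $\mathbf{WGJFE}$. The gamma factor in question is the $\mathbf{WGJFE}$ one, however, and Corollary \ref{coro mult GJ} covers the weak version. We must also check the surjectivity hypothesis on restriction of linear forms, which holds automatically over a field. A second point to check is that the normalization of square roots, $|\Gg_n|^{1/2}=|\Hh_n|$, is compatible with taking the product of the normalizations for the $r$ blocks of size $f$. I expect it is, because $|\Hh_n|=q^{n^2/2}$ and $(q^{f^2/2})^r$ differ only by a positive power of $q^{1/2}$ with a consistent sign convention; still, this should be verified, and in the case $f=1$ it is exactly what fixes the sign of $q^{-m}$.
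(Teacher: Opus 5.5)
Your proposal is correct and follows the paper's proof, which is exactly the combination of Lemma~\ref{strrhomult}, Propositions~\ref{prop supercusp dist} and~\ref{sccomp}, and Remark~\ref{rem triv gamma}. Your norm computation relating $\xi'(\d')$ to $\xi(\d)$ makes explicit what the paper leaves implicit; note that $\d^{q^{m'}}$ and $\d^{q^m}$ differ by a Frobenius power of $\k_n/\k_f$ only when $r$ is odd, since $m-m'=m'(r-1)$, but that is the only case where you use it. The normalization you flagged is indeed compatible: $(|\Gg_f|^{1/2})^r\, q^{f^2r(r-1)/2}=q^{n^2/2}=|\Hh_n|$ holds both for $f$ even with $|\Gg_f|^{1/2}=q^{f^2/2}$, and for $f=1$ with either sign of $q^{1/2}$, because then $r=n$ is even.
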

\begin{proof}
%The first assertion follows from Proposition \ref{prop cusp lin sd} below. 
%For the second, we first assume that~$f\geq 2$. If~$R$ has characteristic zero, the result follows from Proposition \ref{prop supercusp dist} and \cite[Lemme 3.4.12, Lemme 4.3.13]{Coniglio}. When~$R$ has positive characteristic and~$\pi$ is supercuspidal, the result follows from the characteristic zero case and Equation \eqref{eq red gamma mod ell}. 
Follows directly from Lemma \ref{strrhomult}, Propositions \ref{prop supercusp dist} and \ref{sccomp}, and Remark \ref{rem triv gamma}.
%When~$r\geq 2$, it follows from the multiplicativity relation of Corollary \ref{coro mult GJ} and the supercuspidal case.  
%When~$f=1$, we can still apply multiplicativity to obtain that 
%$\g(\pi,\psi) =\gamma(\xi',\psi)^{2m}$. Now~$\xi'=\xi'^{-1}$ by self-duality of~$\pi$. We thus obtain the equality 
%$\g(\xi',\psi)^2=\xi'(-1)$ as a consequence of Proposition \ref{prop gamma unit}. \rob{Not any more when $1$ in supercuspidal support}
\end{proof}

\begin{rema}\label{rem gal green}
Assume now that~$\k/\k_0$ is a quadratic extension, and that~$\delta_0\in \k^\times$ is a trace zero element. If~$R$ has characteristic zero, it follows from the Green character formula evaluated at central elements, that~$\omega_{\pi}(\delta_0)=\xi(\delta_0)$, where~$\xi$ is the Green parameter of~$\pi$, and by reduction modulo~$\ell$, the same holds with~$\xi$ the James parameter of~$\pi$ in positive characteristic. Fix a non-trivial character~$\psi_0:\k_0\to R^\times$. If~$\pi$ is~$\GL_n(\k_0)$-distinguished, Theorem \ref{thm Gal gamma} then gives the formula 
\[\gamma(\pi(\xi),\psi_0\circ \tr_{\k/\k_0})=\xi(\delta_0).\] 
\end{rema}

\subsection{An alternative approach using Kondo's formula}\label{sec dist gamma 2}
In the previous section our first step was to reduce the computation of gamma factors of self-dual/$\sigma$-self-dual cuspidal representations to the supercuspidal case. Here we propose another approach in the supercuspidal case, based on the work of Kondo.  

\begin{lemm}[Kondo's formula]\label{Kondo}
Let~$\pi=\pi(\xi)$ be an irreducible cuspidal~$R$-representation of~$G_n$.  Then
\[\gamma(\pi(\xi),\psi)=(-1)^{n-1}\gamma(\xi, \psi ).\]
\end{lemm}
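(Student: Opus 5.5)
The plan is to prove the formula first over characteristic zero, then deduce the positive characteristic case by reduction modulo~$\ell$. Here $\gamma(\xi,\psi)$ is read as the gamma factor of the character $\xi$ of $\GL_1(\k_n)=\k_n^\times$ with respect to $\psi\circ\tr_{\k_n/\k}$, namely $q^{-n/2}\sum_{x\in\k_n^\times}\psi(\tr_{\k_n/\k}x)\xi^{-1}(x)$ (up to the convention for $\xi$ versus $\xi^{-1}$ on the dual side). This uses $|\Gg_1(\k_n)|=q^n$, so the normalizing square root is compatible with $|\Mat_n(\k)|^{1/2}=q^{n^2/2}$ only up to the fixed choices, and these normalizations must be tracked.

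\emph{Characteristic zero.} I would start from the definition \eqref{eq BK gamma}. Since $\pi^\vee=\pi(\xi^{-1})$ is irreducible, Schur's lemma applies. Taking traces gives
\[
\gamma(\pi,\psi)\dim\pi=|\Gg_n|^{-1/2}\sum_{g\in G_n}\psi(\tr g)\,\chi_{\pi}(g^{-1}).
\]
Next, split the sum by Jordan decomposition $g=su$ and insert Green's character formula for the cuspidal character. That formula says $\chi_{\pi(\xi)}(su)$ vanishes unless the semisimple part $s$ is conjugate into $\k_n^\times$. For $s\in\k_d^\times$ with $d\mid n$, its value is $(-1)^{n-1}$ times a Green function $Q(u)$ (attached to the centralizer $\GL_{n/d}(\k_d)$ and the anisotropic torus) times $\sum_{\tau}\xi^\tau(s)$.

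For fixed $s$, the sum over unipotent $u$ in the centralizer of $\psi(\tr(su))Q(u)$ simplifies. Because $\tr(su)=\tr(s)$ whenever $u$ commutes with $s$ and is unipotent, the inner sum reduces to $\psi(\tr s)\sum_u Q(u)$. By the standard orthogonality for Deligne--Lusztig characters, $\sum_u Q(u)$ equals $|C(s)|/|T|$ up to a sign and a power of $q$.

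Collecting terms over conjugacy classes then turns the whole expression into
\[
\frac{|G_n|}{|\k_n^\times|}\cdot(\text{sign})\cdot\sum_{x\in\k_n^\times}\psi(\tr_{\k_n/\k}x)\,\xi^{-1}(x).
\]
Dividing by $\dim\pi=\prod_{i=1}^{n-1}(q^i-1)$ should produce a power of $q$ that exactly matches the normalization $q^{-n^2/2}$ against $q^{-n/2}$. The sign should come out as $(-1)^{n-1}$.

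\emph{Main obstacle.} The hardest step is this bookkeeping: getting the sign and the power of $q$ exactly right when evaluating $\sum_u Q(u)$ over the centralizers. If the direct route becomes messy, I would try instead the equivalent characterisation $\sum_g\psi(\tr g)\chi(g^{-1})=\gamma\dim\pi$ together with Deligne--Lusztig induction. Writing $\pi=(-1)^{n-1}R_T^G(\xi)$, I would then use that the Fourier transform of $\psi\circ\tr$ commutes with Deligne--Lusztig induction up to the factor $\varepsilon_G\varepsilon_T\,q^{(\dim G-\dim T)/2}$, which is the Kazhdan/Braverman--Kazhdan statement. This gives $\gamma(R_T^G\xi)=\gamma_T(\xi)$ directly, and the sign $(-1)^{n-1}$ is then inherited from the virtual character.

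\emph{Positive characteristic.} Choose a regular $\xi_K$ lifting the parameter, so that $\pi=r_\ell(\pi(\xi_K))$. Proposition \ref{prop gamma mod ell} gives $\gamma(\pi,\psi)=r_\ell(\gamma(\pi(\xi_K),\psi_K))$. On the torus side, the Gauss sum $\gamma(\xi_K,\psi_K)$ clearly reduces to $\gamma(r_\ell\xi_K,\psi)$, again by Proposition \ref{prop gamma mod ell} applied to $\GL_1(\k_n)$. This completes the argument.
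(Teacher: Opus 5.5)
Your proposal is correct and has the same two-step structure as the paper: first the characteristic-zero identity, then reduction modulo~$\ell$ via Proposition~\ref{prop gamma mod ell}. On the torus side the Gauss sum is an explicit sum, so its reduction is immediate.

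The difference is that the paper simply cites Kondo for characteristic zero. You instead re-derive that case from Green's (Deligne--Lusztig) character formula $\chi_{\pi(\xi)}=(-1)^{n-1}R_T^{G}(\xi)$, with $T=\k_n^\times$. That computation goes through, and more cleanly than you fear: the Green-function sum is exactly $\sum_{u}Q^{C(s)}_{T}(u)=|C(s)|/|T|$, with no extra sign or power of $q$. Reindexing the pairs $(s,x)$ with $x^{-1}sx\in T$ by $(x,t)\in G_n\times\k_n^\times$ then gives
\[\sum_{g\in G_n}\psi(\tr g)\,\chi_\pi(g^{-1})=(-1)^{n-1}\,\frac{|G_n|}{q^n-1}\sum_{t\in\k_n^\times}\psi(\tr_{\k_n/\k}t)\,\xi^{-1}(t),\]
and $|G_n|/(q^n-1)=q^{n(n-1)/2}\dim\pi$. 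So the lemma holds exactly under the compatible choice $|\Gg_n|^{1/2}=q^{n(n-1)/2}|\k_n|^{1/2}$, a normalization the paper's statement leaves implicit.

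The paper's route buys brevity by relying on Kondo (or Braverman--Kazhdan). Yours is self-contained modulo Green's formula and the standard Green-function identity, and it makes the square-root convention visible.

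One caveat concerns your fallback. The gamma factor is the ratio of $\sum_g\psi(\tr g)\chi(g^{-1})$ to $\chi(1)$, so it does not change when $\chi$ is replaced by $-\chi$. The sign therefore cannot be ``inherited from the virtual character.'' It comes from the factor $\varepsilon_G\varepsilon_T=(-1)^{n-1}$ in the commutation with Deligne--Lusztig induction. Concluding $\gamma(R_T^G\xi)=\gamma_T(\xi)$ as you wrote would give the wrong sign for even $n$.
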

\begin{proof}
For~$R$ of characteristic zero this is proved by Kondo in \cite{Kondo}.  The modular case follows by reduction modulo~$\ell$, using Proposition \ref{prop gamma mod ell}.
\end{proof}

%In characteristic zero, for~$\xi:\k_n^\times \to R^\times$ a regular character, Kondo proved in \cite{Kondo} the following formula: 
%
%\[\gamma(\pi(\xi),\psi)=(-1)^{n-1}\gamma(\xi,\psi).\]

We recover a different proof of Proposition \ref{sccomp} % as well as Remark \ref{rem gal green}, 
from the following computation.

\begin{lemm}
Assume that~$q$ is odd and that~$n=2m$ is even. Let~$\xi:\k_n^\times\to R^\times$ be a non-trivial character such that~$\xi_{|\k_m^\times}=\bf{1}$. Let~$\d\in \k_n^\times$ be such that~$\tr_{\k_n/\k_m}(\delta)=0$, and~$\psi:\k_m\to R^\times$ be a non trivial character. Then 
\[\gamma(\xi,\psi)=\xi(\delta).\]  
\end{lemm}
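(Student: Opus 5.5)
We compute directly from the definition \eqref{eq BK gamma} of the gamma factor. Here $\xi$ is viewed as a character of $G=\k_n^\times=\Gg^\times$, where $\Gg=\k_n$ is regarded as a separable $\k_m$-algebra, and $\Psi=\psi\circ\tr_{\k_n/\k_m}$. The square root is normalized as $|\Gg|^{1/2}=q^m$, consistent with the convention $|\Gg|^{1/2}=|\Hh|$ for $\Hh=\k_m$. Since $\xi$ is a character, it satisfies Schur's lemma and $\xi^\vee=\xi^{-1}$, so
\[\gamma(\xi,\psi)=q^{-m}\sum_{x\in\k_n^\times}\psi\big(\tr_{\k_n/\k_m}(x)\big)\,\xi^{-1}(x).\]
I will not try to apply Proposition \ref{prop main} here, since \ref{A4} fails for commutative $G$. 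Instead I will evaluate the Gauss sum by hand, using the same idea of summing over $\Hh=\k_m$.

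The key step is to write $x=ty$, with $t\in\k_m^\times$ and $y$ running over a set $Y$ of representatives of $\k_n^\times/\k_m^\times$. Because $\xi$ is trivial on $\k_m^\times$ and $\tr_{\k_n/\k_m}$ is $\k_m$-linear, the sum becomes
\[\sum_{y\in Y}\xi^{-1}(y)\sum_{t\in\k_m^\times}\psi\big(t\,\tr_{\k_n/\k_m}(y)\big).\]
By orthogonality for the non-trivial character $\psi$ of $\k_m$ (Remark \ref{orthcharvalues}), the inner sum equals $q^m-1$ if $\tr_{\k_n/\k_m}(y)=0$, and $-1$ otherwise. The trace-zero elements of $\k_n^\times$ form exactly the single coset $\delta\k_m^\times$, since the kernel of the trace is a one-dimensional $\k_m$-subspace. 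Hence the total equals
\[q^m\xi^{-1}(\delta)-\sum_{y\in Y}\xi^{-1}(y).\]
The last sum vanishes because $\xi$ induces a non-trivial character of the finite group $\k_n^\times/\k_m^\times$ with values in the field $R$. It follows that $\gamma(\xi,\psi)=\xi(\delta)^{-1}$.

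Finally, let $\sigma$ be the generator of $\Gal(\k_n/\k_m)$. Then $\tr_{\k_n/\k_m}(\delta)=0$ means $\sigma(\delta)=-\delta$, so $\delta^2$ is fixed by $\sigma$ and lies in $\k_m^\times$. Therefore $\xi(\delta)^2=1$ and $\xi(\delta)^{-1}=\xi(\delta)$, which gives the claim. The only delicate point is bookkeeping of conventions: that $\gamma$ involves $\pi^\vee=\xi^{-1}$, and that the normalization $q^{-m}$ exactly cancels the $q^m$ coming from the trace-zero coset. I expect no genuine obstacle beyond this.
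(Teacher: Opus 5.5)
Your proof is correct and follows the paper's argument essentially verbatim. Both factor the Gauss sum over $\k_m^\times\backslash\k_n^\times$, evaluate $\sum_{t\in\k_m^\times}\psi(t\,\tr_{\k_n/\k_m}(x))$ as $q^m-1$ or $-1$ according to whether the trace vanishes, and kill the leftover full sum using non-triviality of $\xi$ on $\k_n^\times/\k_m^\times$. The only cosmetic difference is that the paper writes the trace-zero coset as $\delta^{-1}\k_m^\times$ and so lands on $\xi(\delta)$ directly, whereas you obtain $\xi(\delta)^{-1}$ and then use $\delta^2\in\k_m^\times$; this is the same fact the paper relies on implicitly when it identifies $\delta^{-1}\k_m^\times$ as the trace-zero coset.
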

\begin{proof}
We have
\[q^m\gamma(\xi,\psi)=\sum\limits_{x\in \k_n^\times} \xi^{-1}(x)\psi(\tr_{\k_n/\k_m}(x))=\sum\limits_{x\in \k_m^\times\backslash \k_n^\times} \xi^{-1}(x)\sum\limits_{t\in \k_m^\times} \psi(t \tr_{\k_n/\k_m}(x)).\] If~$x\notin \delta^{-1} k_m^\times$, the character~$t\in \k_m\to \psi(t \tr_{\k_n/\k_m}(x))$ is non trivial and \[\sum\limits_{t\in \k_m^\times} \psi(t \tr_{\k_n/\k_m}(x))=-1.\] On the other hand if~$x=\delta^{-1} t_0$ with~$t_0\in \k_m^\times$, then 
\[\sum\limits_{t\in \k_m^\times} \psi(t \tr_{\k_n/\k_m}(x))=q^m-1.\] All in all:
\[q^m\gamma(\xi,\psi)=-\sum\limits_{x\in \k_m^\times\backslash \k_n^\times-\delta^{-1} \k_m^\times}\xi^{-1}(x)+\xi(\delta)(q^m-1)=\xi(\delta)- \sum\limits_{x\in \k_m^\times\backslash \k_n^\times}\xi^{-1}(x)+\xi(\delta)(q^m-1)\] \[\ =\xi(\delta)+ 0+\xi(\delta)(q^m-1)=q^m \xi(\delta).\]
\end{proof}

\begin{rema}
By Proposition \ref{distsdssd}, for irreducible supercuspidal~$R$-representations we have completed our quest, computing both the gamma factor and the sign of the distinguishing linear form (which are equal by Section \ref{sec sym pair}).  For irreducible cuspidal non-supercuspidal~$R$-representations, this still leaves the task of identifying which self-dual and~$\sigma$-self-dual representations are distinguished. We achieve this in 
\cite{KMSII}.
%
%We consider this question in future work.  For now, notice that~$st_n(1)$ is not distinguished if~$q^m\neq 1[\ell]$. 
\end{rema}

\section{Gamma factors of irreducible distinguished representations}
In this section, we follow the notation of Section \ref{sec sym pair}, in particular we set~$G=\GL_n(\k)$.  Because~gamma factors of irreducible~$R$-representations of~$G$ only depend on their cuspidal support, one can deduce a formula for~gamma factors of irreducible distinguished~$R$-representations of~$G$ from Mackey theory. 
Here we denote by~$\k_0$ the field~$\k$ if we are interested in the linear or twisted linear models, and by~$\k_0$ a subfield of~$\k$ with~$[\k:\k_0]=2$ if we consider the Galois model. For~$\pi$ a representation of~$G$, we set 
$\pi^*=\pi^\vee$ in the linear cases, whereas we set~$\pi^*=(\pi^\vee)\circ \sigma$ in the Galois case, where~$\sigma$ is the Galois involution attached to~$\k/\k_0$. 

\begin{prop}\label{prop the Matringe method}
Assume that~$q$ is odd in the case of linear models. Let~$\rho_1,\dots,\rho_k$ be cuspidal irreducible representations of~$\GL_{n_1}(\k), \dots,  \GL_{n_k}(\k)$ respectively, and assume that~$\rho_1\times \dots \times \rho_k$ is distinguished. Then there exists~$\iota$ an involution in~$S_k$ such that~$\rho_{\iota(i)}=\rho_i^*$ for each~$i=1,\dots,k$, and~$\rho_i$ is distinguished if~$\iota(i)=i$. 
\end{prop}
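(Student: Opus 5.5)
We argue by Mackey theory, the standard method for distinguished induced representations. Write $\pi=\rho_1\times\dots\times\rho_k=\mathrm{Ind}_P^G(\rho)$ with $P$ the standard parabolic of type $(n_1,\dots,n_k)$ and $\rho=\rho_1\otimes\dots\otimes\rho_k$. By Frobenius reciprocity and Mackey's formula,
\[\Hom_{H}(\pi,R)\neq 0 \iff \text{there is a double coset } PxH \text{ with } \Hom_{P\cap xHx^{-1}}(\rho,\, \mathbf{1})\neq 0 .\]
(Finite groups, with $p$ invertible in $R$, so restriction to $U$ is exact on both sides.) We fix such an $x$. The first step is to recall the description of the double cosets $P\backslash G/H$ for the three symmetric pairs. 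These are $P$-orbits on $G/H$, i.e.\ on the symmetric space of $\theta$-twisted elements. Each orbit contains a representative $x$ such that $\theta_x:=\mathrm{Int}(x)\circ\theta\circ\mathrm{Int}(x)^{-1}$ maps the Levi $M$ to a standard Levi and $P$ to a parabolic in good position. Here $\theta$ is $\sigma$, conjugation by ${\rm diag}(1_m,-1_m)$, or conjugation by the element generating $\ll$. This is the finite field analogue of the results in Matringe's and Offen's work on $P\backslash G/H$; over a finite field the same combinatorics applies via the Bruhat decomposition. To such an $x$ one attaches an involution $\iota\in S_k$, given by how $\theta_x$ permutes the blocks of $M$ up to the longest Weyl element.

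The second step is to compute $P\cap xHx^{-1}$. We show that it decomposes as $(M\cap xHx^{-1})\ltimes(U\cap xHx^{-1})$ and that $M\cap xHx^{-1}$ is the $\theta_x$-fixed points of $M$. Explicitly, it is the product over pairs $\{i,\iota(i)\}$ with $i\neq\iota(i)$ of graphs $\{(g,g^{*\text{-}\theta})\}\subset G_{n_i}\times G_{n_{\iota(i)}}$, times, for fixed points $i=\iota(i)$, a subgroup $H_i\subseteq G_{n_i}$. A priori $H_i$ is the fixed points of some involution of $G_{n_i}$; in the linear case it could be $\GL_a\times\GL_b$ with $a\neq b$, or a $\GL(\ll)$-type group.

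The key step, and the main obstacle, is to use cuspidality to kill all "non-admissible" double cosets. If $\theta_x(P)$ does not contain the opposite Levi configuration, then $U\cap xHx^{-1}$ surjects onto the unipotent radical $N_i$ of a proper parabolic of some block $G_{n_i}$. More precisely, the projection of $P\cap xHx^{-1}$ to $G_{n_i}$ contains $N_i$ on which the character must be trivial. Then a nonzero $P\cap xHx^{-1}$-invariant form on $\rho$ would give a nonzero $N_i$-invariant form on $\rho_i$, contradicting cuspidality of $\rho_i$ via Remark~\ref{rem J vs dual}. So only cosets where $U\cap xHx^{-1}$ acts through elements mapping into the unipotent radical of $U$ itself survive. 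For these, invariance on the graph blocks gives $\Hom_{G_{n_i}}(\rho_i\otimes\rho_{\iota(i)}^{\theta},R)\neq0$, i.e.\ $\rho_{\iota(i)}\simeq\rho_i^*$.

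For fixed blocks we still have $H_i$-distinction of $\rho_i$, and we must show that $H_i$ is conjugate to the standard symmetric subgroup $H_{n_i}$. In the Galois case all such forms are conjugate to $\GL_{n_i}(\k_0)$. In the linear and twisted linear cases, $H_i$ could be $\GL_a\times\GL_b$ with $a\neq b$, or an odd-size twisted form. We rule these out using cuspidality again: for $a\neq b$, $\GL_a\times\GL_b$ contains the unipotent radical of a parabolic stabilising a suitable flag, or one invokes that cuspidal representations distinguished by $\GL_a\times\GL_b$ force $a=b$ \cite{SecANT19}. This is where $q$ odd is used, so that $\theta$ is diagonalisable. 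This yields the stated involution $\iota$.
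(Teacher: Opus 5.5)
Your strategy is the one the paper uses: its proof just invokes the Mackey-theoretic method of \cite[Lemma 5.13]{KMS}. Most of your argument is sound.
\begin{itemize}
\item Cuspidality kills every orbit with $\theta_x(M)\neq M$.
\item Swapped pairs give $\rho_{\iota(i)}\simeq\rho_i^*$.
\item In the Galois case, fixed blocks are conjugate to $\GL_{n_i}(\k_0)$ by Lang's theorem.
\item In the twisted linear case, the element $y$ defining $\theta_x$ satisfies $y^2=\alpha$ and preserves each fixed block. So $n_i$ is even and the fixed subgroup is already a conjugate of $\GL_{n_i/2}(\ll)$; there is no ``odd-size twisted form'' to exclude.
\end{itemize}
One imprecision: cuspidality needs the projection of $P\cap xHx^{-1}$ to $M$ to contain the unipotent radical of the proper parabolic $M\cap\theta_x(P)$ of $M$. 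Containing $N_i$ in the projection to a single factor $G_{n_i}$ is not enough, as the graph subgroups show.

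\textbf{The gap is the last step, for linear models.} Two things go wrong there.
\begin{itemize}
\item Your justification is false. $\GL_a\times\GL_b$ is the Levi subgroup stabilizing $V=A\oplus B$ with $A,B\neq0$, and it contains no unipotent radical of a proper parabolic. Such a radical contains all $1+\lambda\otimes w$ with $\lambda$ vanishing on $V_{r-1}$ and $w\in V_{r-1}$. Preserving both $A$ and $B$ then forces $B\subseteq V_{r-1}\subseteq A$, or the symmetric inclusion.
\item The claim you aim for, that every fixed block has $a_i=b_i$, is false. For $n_i=1$ the trivial character is cuspidal and $\GL_1\times\GL_0$-distinguished, and this really occurs. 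Take $\mathbf{1}\times\mathbf{1}$ of $\GL_2(\k)$ on the closed orbit $x=1$: there $\iota=\mathrm{id}$, with blocks of types $(1,0)$ and $(0,1)$.
\end{itemize}
So no citation of the form ``cuspidal and $\GL_a\times\GL_b$-distinguished forces $a=b$'' can close the argument as you state it.

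\textbf{How to repair it.}

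(i) Take $n_i\geq2$ and $a>b\geq1$. Then $H=\GL_a\times\GL_b$ is the Levi of the parabolic $P'=H\ltimes U'$ stabilizing the $b$-dimensional summand. Hence $\Hom_H(\rho,R)=\Hom_{P'}(\rho,\mathrm{Ind}_H^{P'}R)$. The module $\mathrm{Ind}_H^{P'}R$ splits along $H$-orbits of characters $\chi_X$ of $U'$, with $X\in\Mat_{a\times b}(\k)$ of rank $r\leq b<a$. For $X=\begin{pmatrix}1_r&0\\0&0\end{pmatrix}$, an $(H_XU',\chi_X)$-equivariant form is invariant under two families of elements. The first is the elements $(g_1,1)\in H_X$ with $g_1$ unipotent. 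The second is the part of $U'$ on which $\chi_X$ is trivial. Together these generate the unipotent radical of the stabilizer of $\langle e_1,\dots,e_r,e_{a+1},\dots,e_n\rangle$, a proper nonzero subspace. So the form vanishes by cuspidality of $\rho^\vee$. If $a=0$ or $b=0$, then $\rho_i$ is trivial, so $n_i=1$.

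(ii) Balance the trivial blocks. For $i\neq\iota(i)$, $y$ interchanges $V_i$ and $V_{\iota(i)}$. Its $\pm1$-eigenspaces there are the graphs $\{(\pm yw,w)\}$, both of dimension $n_i$. Both eigenspaces of $y$ on $V$ have dimension $m$, so $\sum_{\iota(i)=i}a_i=\sum_{\iota(i)=i}b_i$. Combined with (i), the trivial characters of $\GL_1$ sitting at fixed blocks of type $(1,0)$ and of type $(0,1)$ are equinumerous. Pairing them off, using $\mathbf{1}^*=\mathbf{1}$, modifies $\iota$. The fixed points of the new $\iota$ are exactly blocks with $n_i\geq2$ and $a_i=b_i$, where $\rho_i$ is genuinely $\GL_{n_i/2}(\k)\times\GL_{n_i/2}(\k)$-distinguished.
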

\begin{proof}
The method of \cite[Lemma 5.13]{KMS} applies verbatim. 
\end{proof}

Let~$\pi$ be an irreducible representation. We can realize it as a quotient of~$\rho_1\times \dots \times \rho_k$ as above, where the set~$\{\rho_1, \dots,\rho_k\}$ is unique. If~$\pi$ is moreover distinguished and $q$ is assumed to be odd in the linear case, then $\pi$ is self-dual, and we can choose a re-ordering of the cuspidal support in the form 
$\{\rho_1,\dots,\rho_a, \tau_1,\tau_1^*,\dots,\tau_b,\tau_b^*,c\mathbf{1}\},$ with each~$\rho_i$ distinguished but non trivial, and each $\tau_i$ is non trivial. Observe that in the case of linear or twisted linear models, the multiplicity $c$ of $\mathbf{1}$ in the cuspidal support has to be even. 

\begin{coro}
Let~$\pi$ be a distinguished representation of~$G$, and let the~$\rho_i,\tau_j$ be as above. Let~$\psi$ to be a non-trivial character of~$\k$, assumed to be trivial on~$\k_0$ if we consider Galois models.
\begin{enumerate}
	\item  If~$\pi$ has a linear or twisted linear model, and $q$ is moreover odd in the linear case, then \[\gamma(\pi,\psi)=(-q^{-1/2})^c\prod_{i=1}^a\gamma(\rho_i,\psi)\prod_{i=1}^b\omega_{\tau_i}(-1),\] where the~$\gamma(\rho_i,\psi)$ are computed by Theorem \ref{thm dist gamma green} when~$\dim(\rho_i)\geq 2$, and are quadratic Gauss sums otherwise (when $q$ is even, there is no one dimensional $\rho_i$).
	\item If~$\pi$ has a Galois model, then  \[\g(\pi,\psi)=(-q^{-1/2})^c.\]
\end{enumerate} 
\end{coro}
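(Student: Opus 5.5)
The plan is to compute $\gamma(\pi,\psi)$ from the cuspidal support, using multiplicativity and then evaluating each block. By Theorem \ref{thm GJFE}, $\pi$ satisfies $\mathbf{WGJFE}$. It is an irreducible subquotient of $\rho_1\times\cdots\times\rho_a\times\tau_1\times\tau_1^*\times\cdots\times\tau_b\times\tau_b^*\times\mathbf{1}^c$; since $R$ is a field, the restriction-surjectivity hypothesis holds trivially. Each cuspidal factor satisfies $\mathbf{WGJFE}$ by Proposition \ref{prop WGJFE irr}. Corollary \ref{coro mult GJ} therefore gives
\[\gamma(\pi,\psi)=\gamma(\mathbf{1},\psi)^c\prod_{i=1}^a\gamma(\rho_i,\psi)\prod_{j=1}^b\gamma(\tau_j,\psi)\gamma(\tau_j^*,\psi).\]
Remark \ref{rem triv gamma} gives $\gamma(\mathbf{1},\psi)=-q^{-1/2}$, which accounts for the factor $(-q^{-1/2})^c$. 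What remains is to evaluate the pair products and, in the Galois case, the $\gamma(\rho_i,\psi)$.

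\textbf{Pair terms.} The $\tau_j$ are cuspidal and non-trivial, so each $\tau_j$ satisfies $\mathbf{GJFE}$: Proposition \ref{prop cuspidal GJFE} covers $\dim\geq 2$, and Lemma \ref{lm GJ GL1} covers non-trivial characters. The same holds for $\tau_j^\vee$, so Lemma \ref{lemmgammagammatilde} gives $\gamma(\tau_j,\psi)\gamma(\tau_j^\vee,\psi^{-1})=1$. Equation \eqref{eq twisting psi in gamma} with $t=-1$ gives $\gamma(\tau_j^\vee,\psi)=\omega_{\tau_j^\vee}(-1)^{-1}\gamma(\tau_j^\vee,\psi^{-1})=\omega_{\tau_j}(-1)\gamma(\tau_j^\vee,\psi^{-1})$. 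In the linear cases $\tau_j^*=\tau_j^\vee$, so the pair contributes exactly $\omega_{\tau_j}(-1)$.

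In the Galois case $\tau_j^*=\tau_j^\vee\circ\sigma$. Since $\psi$ is trivial on $\k_0$, we have $\psi\circ\sigma=\psi^{-1}$, because $\psi(x)\psi(\sigma x)=\psi(\tr x)=1$. Transporting the Godement--Jacquet sum by $\sigma$ (a ring automorphism of $\Gg$ commuting with the Fourier transform up to $\psi\mapsto\psi\circ\sigma$) gives $\gamma(\rho\circ\sigma,\psi)=\gamma(\rho,\psi\circ\sigma)$. Hence $\gamma(\tau_j^*,\psi)=\gamma(\tau_j^\vee,\psi^{-1})$, and the pair contributes $1$.

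\textbf{Distinguished cuspidal terms.} In the linear case the $\gamma(\rho_i,\psi)$ are left as in the statement. They are given by Theorem \ref{thm dist gamma green} when $\dim\rho_i\geq 2$, since a distinguished cuspidal is self-dual by Proposition \ref{distsdssd}. For $\dim\rho_i=1$ we get $\rho_i^2=1$ with $\rho_i\neq\mathbf{1}$, so $\gamma(\rho_i,\psi)$ is a quadratic Gauss sum; when $q$ is even no such character exists. In the Galois case, for $\dim\rho_i\geq 2$, Corollary \ref{cor triv gamma} gives $\gamma(\rho_i,\psi)=1$. For $\dim\rho_i=1$, $\rho_i$ is a character of $\k^\times$ trivial on $\k_0^\times$, and the sum $|\k|^{-1/2}\sum_x\rho_i^{-1}(x)\psi(x)$ is computed exactly as in the final Lemma of the previous subsection. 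Note, however, that the present situation puts $\psi$ trivial on $\k_0$, so we write $\psi=(\psi_0\circ\tr)_\delta$ and use \eqref{eq twisting psi in gamma}. This gives $\rho_i(\delta)^{-1}\cdot\rho_i(\delta)=1$ (or its Galois variant), so these factors also contribute $1$. Collecting all contributions yields both formulas.

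The main obstacle is the bookkeeping of conventions: the square-root normalisation and the sign relating $\gamma(\rho\circ\sigma,\psi)$ to $\gamma(\rho,\psi^{-1})$. The one-dimensional Galois-distinguished characters also need checking so that they contribute exactly $1$ and not a sign. I would verify these first.
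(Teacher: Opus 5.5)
Your proposal is correct and follows essentially the same route as the paper. Both arguments use multiplicativity over the cuspidal support and $\gamma(\mathbf{1},\psi)=-q^{-1/2}$. For the paired terms, both use the identity $\gamma(\tau,\psi)\gamma(\tau^\vee,\psi^{-1})=1$, combined with the twist by $-1$ in the linear cases and with $\psi\circ\sigma=\psi^{-1}$ in the Galois case. For the distinguished cuspidal factors in the Galois case, both appeal to Corollary \ref{cor triv gamma}.

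You are in fact more explicit than the paper on two points. First, you justify the Galois pair identity $\gamma(\tau\circ\sigma,\psi)=\gamma(\tau,\psi\circ\sigma)$. Second, you treat the one-dimensional $\GL_1(\k_0)$-distinguished characters: Corollary \ref{cor triv gamma} is stated only for $n\geq 2$, and your Gauss sum computation correctly shows that these characters contribute $1$.
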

\begin{proof}
From multiplicativity, Proposition \ref{prop gamma unit}, Proposition \ref{prop the Matringe method}, and Remark \ref{rem triv gamma} we immediately get the formula~$\gamma(\pi,\psi)=\prod_{i=1}^a\gamma(\rho_i,\psi)\prod_{i=1}^b\omega_{\tau_i}(-1(-q^{-1/2})^c)$ in the linear cases. In the Galois case, by the same argument, we obtain~$\gamma(\pi,\psi)=\prod_{i=1}^a\gamma(\rho_i,\psi)(-q^{-1/2})^c$, and the result follow from Corollary \ref{cor triv gamma}. 
\end{proof}

\appendix

\sectionApp{Multiplicity one for twisted linear models}\label{app A} 

As promised, we verify condition \ref{A2} in the case of twisted linear models. We actually check that~$(G,H)$ is a Gelfand pair in the case of linear models, which in particular implies self-duality of irreducible representations, as follows from the results of \cite{Aizen} and \cite{Zhang}. So~$n=2m$,~$G=\GL_{2m}(\kk)$ and~$H\simeq \GL_m(\ll)$ as in Section \ref{sec linear models}. We claim that:

\begin{prop}\label{prop dble classes}
The double cosets in~$H\backslash G/H$ are fixed by~$g\to g^{-1}$. 
\end{prop}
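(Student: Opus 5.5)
Here $H=\GL_m(\ll)=C_G(\ll^\times)$ is the centralizer of $\ll$, embedded in $\Mat_{2m}(\k)$ via $\ll=\k[\sqrt{\a}]$. Let $\theta$ be the involution of $G$ given by conjugation by $w={\rm diag}(1_m,-1_m)$. Since $w\sqrt{\a}w^{-1}=-\sqrt{\a}$, the element $w$ normalizes $\ll$ and acts on it by the nontrivial Galois automorphism, so $H=G^\theta$. Put $\tau(g)=g\theta(g)^{-1}$; this map identifies $G/H$ with the set $\mathcal{S}=\{s\in G:\theta(s)=s^{-1}\}\cap \tau(G)$, and $H$-double cosets correspond to $H$-conjugacy classes in $\tau(G)$. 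We have
\[\tau(g^{-1})=g^{-1}\theta(g)=g^{-1}\,\tau(g)^{-1}\,g.\]
Hence it suffices to show, for each $g$, that $\tau(g)^{-1}$ is $H$-conjugate to a $G$-conjugate of $\tau(g)$ by an element lying in $gH$. The standard reformulation is this: $HgH=Hg^{-1}H$ for all $g$ as soon as every $s\in\tau(G)$ is conjugate to $s^{-1}=\theta(s)$ by an element of $H$, possibly up to composing with an anti-involution. Following Gelfand's trick, I will argue instead with the anti-involution $\iota(g)=\theta(g^{-1})$, or with $g\mapsto g^{-1}$ directly, and produce explicit representatives.

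\textbf{Step 1: explicit double coset representatives.} Let $V=\ll^m\simeq\k^{2m}$. The group $G=\GL_\k(V)$ acts on $\ll$-structures, and $G/H$ is the set of $\k$-linear maps $J$ with $J^2=\a$ that are $G$-conjugate to $\sqrt{\a}$. So $H\backslash G/H$ classifies pairs $(J_0,J)$, with $J_0=\sqrt{\a}$ fixed, up to simultaneous conjugation. The key invariant is $A=J_0J+JJ_0$, which commutes with both $J_0$ and $J$; equivalently, consider $T=J_0^{-1}J$ together with $J_0$. Via the $\ll$-linear map $J_0J\in \End_\ll(V)$ and its conjugate-linear part, the classification reduces to the theory of pairs of anticommuting/commuting pieces, in the manner of the linear case treated by Jacquet--Rallis. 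So first I decompose $V$ under the $\k[J_0,J]$-action. This algebra is a quotient of the quaternion-type algebra $\k\langle x,y\rangle/(x^2=\a,\,y^2=\a)$, whose center contains $z=xy+yx$. Decomposing $V$ according to the characteristic polynomial of $z$ gives blocks where the algebra is a quaternion algebra over $\k[z]/(P)$, which is split because the fields are finite, together with degenerate blocks where $z^2=4\a^2$.

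\textbf{Step 2: invariance under $g\mapsto g^{-1}$.} The inversion $g\mapsto g^{-1}$ exchanges the pair $(J_0,J)$ with $(J,J_0)$ up to the conjugation by $g$. Concretely, $HgH$ corresponds to the class of $(J_0, gJ_0g^{-1})$, and $Hg^{-1}H$ corresponds to the class of $(J_0,g^{-1}J_0g)$, which is conjugate by $g$ to $(gJ_0g^{-1},J_0)$. So I must show that each pair $(x,y)$ with $x^2=y^2=\a$ is $\GL(V)$-conjugate to the swapped pair $(y,x)$. The swap $x\leftrightarrow y$ is an automorphism of the algebra that fixes $z$, so it preserves the block decomposition of Step 1. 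On each semisimple block the algebra is a matrix algebra over a field $\k[z]/(P)$, and the module is determined by its dimension alone, which the swap preserves. By Skolem--Noether, the swap is inner on each simple block, so the twisted module is isomorphic to the original one.

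\textbf{Main obstacle.} The hard part is the non-semisimple blocks, where $z-2\a$ or $z+2\a$ is nilpotent. There the module is not determined by dimension. For these I would give an explicit normal form: Jordan-type blocks built from $x=\sqrt{\a}$ acting on $\ll^k$ and $y=x(1+N)$ with $N$ nilpotent and anticommuting appropriately. I would then write down an explicit intertwiner realizing the swap, for instance a suitable transpose-like involution, and check it directly. Alternatively, I would count: show that the number of $H$-double cosets equals the number of classes of such modules, and that the swap preserves the invariant given by the Jordan type of $z$, which is a complete invariant. Verifying that this invariant is indeed complete for the nilpotent part is the step I expect to require the most care.
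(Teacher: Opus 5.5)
Your pairs reformulation is correct, and it is a genuinely different route from the paper. The paper classifies nothing itself: it transports Guo's $p$-adic argument, namely the normal form $\sigma(s_1,s_2,s_3)$ for elements of the symmetric space followed by Guo's Lemmas 3.1--3.3, with Lemmas 4.2 and 4.4 of \cite{BM} replacing the steps that do not carry over. Identifying $H\backslash G/H$ with $G$-orbits of pairs $(x,y)$ with $x^2=y^2=\a$, and inversion with the swap, is right.

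\textbf{The gap.} Your proof stops exactly where the content lies. The blocks on which $z^2-4\a^2$ is nilpotent are left open, and the invariant you propose there is not complete. On $V=\ll^2$, let $\sqrt{\a}$ act by scalar multiplication and set $N(u_1,u_2)=(0,\overline{u_1})$. The pairs $(\sqrt{\a},\sqrt{\a})$ and $(\sqrt{\a},\sqrt{\a}+N)$ both satisfy $y^2=\a$ and both have $z=2\a$ as a scalar. Yet they are not conjugate: they correspond to the double coset $H$ and to a different one. A second problem: for $P\neq z\pm 2\a$, the $P$-primary part is only a module over $A/(P^k)\simeq \Mat_2(\k[z]/(P^k))$, not over $A/(P)$. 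So it is not semisimple and not determined by its dimension, and you would need Skolem--Noether for Azumaya algebras over the local ring $\k[z]/(P^k)$.

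\textbf{How to close most of it.} Put $g=x+y$. One checks $gx=yg$, $gy=xg$ and $g^2=2\a+z$. So $g$ conjugates $(x,y)$ to $(y,x)$ on the sum of the primary components where $z+2\a$ is invertible, including the $z=2\a$ block you considered hard.

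\textbf{What remains.} Only the generalized $(-2\a)$-eigenspace of $z$ is left. There $x-y$ is invertible, since $(x-y)^2=2\a-z$, and it conjugates $(x,y)$ to $(-y,-x)$. So on this block you must prove that some invertible operator anticommutes with both $x$ and $y$. This is the genuinely missing step, and it must use that $\a$ is not a square. One way:
\begin{itemize}
\item Extend scalars to $\ll$ and put $s=x/\sqrt{\a}$, $t=y/\sqrt{\a}$. The block becomes $\mathbb{Z}/2$-graded (by $s$) nilpotent modules over $\ll[\eta]$, with $\eta=st-ts$ odd.
\item The indecomposables are uniserial, classified by length and by the parity of the top.
\item The swap reverses that parity.
\item Galois conjugation on $V\otimes_{\k}\ll$ forces both parities to occur with equal multiplicity, which yields the required odd automorphism.
\item Descend to $\k$ by Noether--Deuring.
\end{itemize}
Some argument of this kind, or Guo-style explicit normal forms, has to be supplied.

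\textbf{A minor error.} Conjugation by $w={\rm diag}(1_m,-1_m)$ has fixed points $\GL_m(\k)\times\GL_m(\k)$, not $\GL_m(\ll)$. The relevant involution is conjugation by $\sqrt{\a}$, though you never actually use $\theta$.
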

\begin{proof}
For~$p$-adic fields this is proved in \cite{Guo} extended in \cite{BM} to non-archimedean local fields of characteristic different to two. The proof of this fact holds in the setting of finite fields, and we quickly recall why. 
\begin{enumerate}
\item First \cite[Lemma 3.4]{Guo} holds in the finite field setting with the same proof, and any~$s$ in the symmetric space is~$H$-conjugate to an element 
$\sigma(s_1,s_2,s_3)$ as in the notations of \cite{Guo}.
\item As in the proof of \cite[Proposition 3]{Guo}, this reduces the proof of the triviality of the action of~$\iota$ on~$H\backslash G/H$ to the proof of \cite[Lemmas 3.1, 3.2 (2) and (3), and 3.3]{Guo}.  
\item Then \cite[Lemma 3.1]{Guo} holds with the same proof. 
\item Then \cite[Lemma 3.2 (2) and (3)]{Guo} holds with a different proof, which is given in \cite[Lemma 4.2]{BM}. Note that \cite[Lemma 4.2]{BM} only proves \cite[Lemma 3.2 (2)]{Guo} but then \cite[Lemma 3.2 (3)]{Guo} holds in the finite setting with the same proof since~$q\neq 2$. 
\item Finally \cite[Lemma 3.3 second statement]{Guo} holds with a different proof, given in \cite[Lemma 4.4]{BM}, whereas \cite[Lemma 3.3 first statement]{Guo} follows at once from what has already been proved in the adaptation of \cite[Lemma 3.2]{Guo} to our finite setting. 
\end{enumerate}
\end{proof}

As a corollary, we obtain the following statement.

\begin{coro}\label{cor glfd sefdual tw lin}
The pair~$(G,H)$ is a Gelfand pair, i.e. if~$\pi$ is an~$H$-distinguished representation of~$G$, then~$\Hom_{R[H]}(\pi,R)$ has dimension one, and moreover~$\pi\simeq \pi^\vee$. 
\end{coro}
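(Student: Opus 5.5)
The plan is the classical Gelfand trick in the finite-group setting, using Proposition \ref{prop dble classes} as the key input. Let $\theta$ be the anti-involution $g\mapsto g^{-1}$ of $G$. It preserves $H$, and by Proposition \ref{prop dble classes} it fixes every double coset $HgH$.

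\emph{Commutativity of the Hecke algebra.} First consider the Hecke algebra $\mathcal{A}=R[H\backslash G/H]$ of bi-$H$-invariant functions, with convolution. The map $f\mapsto f^\vee$, where $f^\vee(g)=f(g^{-1})$, is an anti-automorphism of $\mathcal{A}$. Since $f$ is constant on double cosets and $\theta$ stabilizes each double coset, we get $f^\vee=f$ for all $f\in\mathcal{A}$. Hence $f_1*f_2=(f_1*f_2)^\vee=f_2^\vee*f_1^\vee=f_2*f_1$, so $\mathcal{A}$ is commutative. This requires no hypothesis on the characteristic of $R$.

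\emph{Multiplicity one.} Next I would pass to representations. Frobenius reciprocity gives $\Hom_{R[H]}(\pi,R)\simeq\Hom_{R[G]}(\pi,\Cc(H\backslash G,R))$, and $\End_{R[G]}(\Cc(H\backslash G,R))\simeq\mathcal{A}$, so this endomorphism ring is commutative. In characteristic zero the permutation module $\Cc(H\backslash G,R)$ is semisimple, and a semisimple module with commutative endomorphism ring is multiplicity free. Therefore $\dim\Hom_{R[G]}(\pi,\Cc(H\backslash G,R))\leq 1$ for irreducible $\pi$.

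In positive characteristic $\ell$ semisimplicity may fail, and I expect this to be the main obstacle. I would avoid semisimplicity by using a Gelfand--Kazhdan style argument directly on linear forms, which works for any field $R$.

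Suppose $\Lambda\in\Hom_H(\pi,R)$ and $\Lambda^\vee\in\Hom_H(\pi^\vee,R)$ are nonzero. Consider the bi-$H$-invariant function $F(g)=\Lambda^\vee(\pi^\vee(g)\,\iota(\Lambda))$; more naturally, I would work with the distribution on $G$ attached to the pair of forms. Using the anti-involution $\theta$ on $\Cc(G,R)$, one shows that the bilinear pairings on $\pi$ and $\pi^\vee$ coming from $H$-invariant forms interchange. This gives $\pi^\vee\simeq\pi$ whenever $\pi$ is $H$-distinguished, and, combined with the commutativity of $\mathcal{A}$ acting on the $\pi$-isotypic quotient of $\Cc(H\backslash G,R)$, it gives dimension one.

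\emph{Simpler route when $\pi$ is absolutely irreducible.} The cleaner formulation I would actually write down is the following. Let $e_H=\mathbf{1}_H$, and consider the $R$-algebra $e_HR[G]e_H\simeq\mathcal{A}$. For an irreducible $\pi$, the space $\pi^H$ is a simple $\mathcal{A}$-module; this is standard for idempotent truncations, since $e_H=|H|^{-1}\sum_{h\in H}h$ is an idempotent because $|H|$ is invertible in $R$ whenever $\ell\nmid|H|$. Since $\mathcal{A}$ is commutative and $R$ is algebraically closed, the simple module $\pi^H$ is one-dimensional. Finally, $\Hom_H(\pi,R)\simeq(\pi^\vee)^H$, so its dimension is at most one.

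\emph{The case $\ell\mid|H|$.} Here I would instead use the truncation $\mathcal{A}$ acting on $\Hom_H(\pi,R)$ through $\Cc(H\backslash G,R)$. I would argue that $\Hom_H(\pi,R)$ is a simple module for the commutative algebra $\mathcal{A}$ via the projective-cover argument. This is where the real work lies, and I would check it carefully.

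\emph{Self-duality.} Once $\theta$ fixes all double cosets, the $\mathcal{A}$-module structures on $\pi^H$ and $(\pi^\vee)^H$ correspond under $f\mapsto f^\vee$, which is the identity on $\mathcal{A}$. For distinguished irreducible representations these simple $\mathcal{A}$-modules determine the representation, so $\pi\simeq\pi^\vee$.
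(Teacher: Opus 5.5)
\emph{Verdict: the proposal has a genuine gap. It proves the corollary only when $|H|$ is invertible in $R$, and the case $\ell\mid|H|$ is left unproved.}

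\textbf{What works.} Your argument is correct when $\operatorname{char}R=0$, or more generally when $\ell=\operatorname{char}R$ does not divide $|H|$. In that case:
\begin{enumerate}
\item $e_H$ is an idempotent, and $\pi\mapsto\pi^H$ matches irreducibles having $\pi^H\neq0$ with simple modules over the commutative algebra $e_HR[G]e_H$;
\item $V_H\simeq V^H$ gives $\dim\pi^H=\dim\Hom_{R[H]}(\pi,R)$;
\item since $f^\vee=f$ on $\mathcal{A}$, both multiplicity one and self-duality follow as you describe.
\end{enumerate}

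\textbf{Why the remaining case matters.} The corollary is needed for every field $R$ of characteristic $\neq p$. It supplies \ref{A2} in Theorem \ref{thm lin 1} and self-duality in Proposition \ref{distsdssd} for $\ell$-modular representations. Moreover $|H|$ is divisible by $q^2-1$, hence by $\ell=2$ whenever $q$ is odd, so the case $\ell\mid|H|$ cannot be avoided. For that case the proposal gives no proof:
\begin{itemize}
\item your paragraph on $\ell\mid|H|$ only announces a ``projective-cover argument'';
\item your self-duality paragraph again relies on the idempotent truncation, which no longer exists there.
\end{itemize}

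\textbf{The precise missing step.} When $\ell\mid|H|$, the norm map $V_H\to V^H$ is no longer an isomorphism. So distinction, i.e.\ $(\pi^\vee)^H\simeq\Hom_{R[H]}(\pi,R)\neq0$, says nothing a priori about $\pi^H\simeq\Hom_{R[H]}(\pi^\vee,R)$.

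Your Gelfand--Kazhdan paragraph starts from nonzero $\La$ and $\La^\vee$ in both spaces, so it assumes exactly what is at stake. Made precise, it yields only a characteristic-free statement: if $\pi^H\neq0$ and $(\pi^\vee)^H\neq0$, then $\pi\simeq\pi^\vee$ and both spaces are lines. (For $v\in\pi^H$, the linear form $f\mapsto\La(\pi(f)v)$ is invariant under $f\mapsto f^\vee$, and this forces $\Ker\pi=\Ker\pi^\vee$.) Using that statement to get $\pi\simeq\pi^\vee$ for a merely distinguished $\pi$ is circular.

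\textbf{Why commutativity alone does not close the gap.} The module $\Cc(H\backslash G,R)$ is not semisimple. Commutativity of $\mathcal{A}=\End_{R[G]}(\Cc(H\backslash G,R))$, together with the self-adjointness of $\mathcal{A}$ for the natural pairing, does not by itself exclude the following configuration: $\pi$ is the socle of a self-dual indecomposable summand with endomorphism ring $R$ and head $\pi^\vee\not\simeq\pi$. In that configuration $\pi$ is distinguished, yet $\pi^H=\Hom_{R[G]}(\Cc(H\backslash G,R),\pi)=0$.

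\textbf{What the paper does instead.} The paper does not redo this modular step. It invokes Prasad's argument \cite{Pr1990}, in the form verified for modular coefficients in \cite[proof of Theorem 4.1 and Remark 4.3]{SecANT19}. According to the paper, the only input that argument needs is that $x\mapsto\sigma(x)^{-1}$ fixes $H\backslash G/H$ pointwise, which is Proposition \ref{prop dble classes}.

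To complete your proof, you must either supply an argument valid when $\ell\mid|H|$, or appeal to that one.
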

\begin{proof}
As observed in \cite[Proof of Theorem 4.1 and Remark 4.3]{SecANT19}, the argument of Prasad in \cite{Pr1990} applies as soon as the anti involution $x\mapsto \sigma(x)^{-1}$ fixes $H\backslash G/H$ pointwise. But this latter fact indeed follows from Proposition \ref{prop dble classes}. 
\end{proof}

\sectionApp{Self-duality for cuspidal modular linear models}\label{app B} 

In this paragraph,~$\k$ is a field of characteristic different to two, the integer~$n=2m$ is even, and 
$H\simeq \GL_m(\k)\times\GL_m(\k)$ as in Section \ref{sec linear models}. We observe that this pair is not a Gelfand pair, although it is a cuspidal Gelfand pair as already explained. This follows for example from \cite[Theorem 2]{Aizen}, or from observing that the Steinberg representation of~$\GL_2(\k)$ admits two non-collinear~$H$-invariant linear forms. However, when~$R$ has characteristic zero, it follows from \cite[Theorem A]{Kapon} that whenever an irreducible representation of~$G$ is distinguished, it is self-dual. It is very likely that this result hold in this generality in the modular case, using the techniques of Kapon to reduce to the cuspidal case that we prove here. However, we only need to know that~$H$-distinguished cuspidal representations are self-dual to prove Theorem \ref{thm dist gamma green}, so we content ourselves with this case:

\begin{prop}\label{prop cusp lin sd}
Assume that~$q$ is odd, and~$\pi$ is an~$H$-distinguished cuspidal irreducible~$R$-representation of~$G$, then~$\pi\simeq \pi^\vee$.
\end{prop}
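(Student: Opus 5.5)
The plan is to adapt the Gelfand--Kazhdan--Prasad argument to the cuspidal setting, using cuspidality to discard the double cosets on which that argument would fail. This is the same mechanism as in Section \ref{cuspkills}, and it is needed because $(G,H)$ is not a Gelfand pair.

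First, by Frobenius reciprocity a nonzero $\La\in\Hom_{R[H]}(\pi,R)$ gives an embedding $v\mapsto f_{v,\La}$ of $\pi$ into $\Cc(H\backslash G,R)$. Let $\theta(g)={}^tg^{-1}$; then $H$ is $\theta$-stable and the anti-involution $\iota(g)={}^tg$ preserves $H$. Recall that for irreducible $R$-representations of $\GL_n(\k)$ one has $\pi\circ\theta\simeq\pi^\vee$. In characteristic zero this holds because ${}^tg^{-1}$ is conjugate to $g^{-1}$, so the characters agree. In positive characteristic the same holds at the level of Brauer characters, and irreducible representations are determined by their Brauer characters. It therefore suffices to show that the $H$-distinguished cuspidal $\pi$ satisfies $\pi\simeq\pi\circ\theta$.

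Second, I will run Prasad's argument (as in \cite{Pr1990}, and as used in Corollary \ref{cor glfd sefdual tw lin}) on the space of $H$-bi-invariant functions. Given $\La\in\Hom_{R[H]}(\pi,R)$ and $\La'\in\Hom_{R[H]}(\pi^\vee,R)$ (note that $\pi^\vee$ is also $H$-distinguished, since permutation modules are self-dual), form the relative matrix coefficient. This is the $H\times H$-bi-invariant function $F(g)=\sum_{h\in H}\La'(\pi^\vee(hg)\,\cdot)$ attached to the pairing, or more concretely the image of $\La\otimes\La'$ in $\Cc(H\backslash G/H,R)$. The key claim is the following. Every double coset $HgH$ on which some such $F$ can be nonzero is fixed by $\iota$, that is, $H\,{}^tg\,H=HgH$.

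For this I will use the Jacquet--Rallis description of $H\backslash G/H$ in the finite field setting: via $g\mapsto g\varepsilon g^{-1}\varepsilon$ with $\varepsilon=\diag(1_m,-1_m)$, double cosets correspond to $H$-conjugacy classes in the symmetric space. The representatives split into two kinds. The \emph{semisimple-regular type} ones are of the form $\begin{pmatrix}1&X\\X&1\end{pmatrix}$ with $X$ diagonalisable-type, and these are visibly transpose-stable up to $H$. The remaining ones have a nontrivial unipotent part. For the latter I will show, as in the verification of \ref{A4}, that the stabilizer $H\cap gHg^{-1}$ contains, after conjugation, the unipotent radical of a proper parabolic subgroup. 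Then $F$ vanishes there by cuspidality of $\pi$ and $\pi^\vee$ (Remark \ref{rem J vs dual}), exactly as in Section \ref{cuspkills}. Oddness of $q$ is used so that $\varepsilon$ is a genuine involution and the Cayley-type parametrization works.

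Third, granting the claim, the standard trick finishes the proof. The nonzero function $F$ satisfies $F\circ\iota=F$. On the other hand, $F\circ\iota$ is a relative matrix coefficient for the pair $(\pi\circ\theta)^\vee$ and $\pi\circ\theta$ with respect to $H$-invariant functionals. Comparing the $G$-modules they generate inside $\Cc(H\backslash G,R)$, and using multiplicity one for cuspidal distinguished representations (\cite[Corollary 2.16]{SecANT19}), gives $\pi\simeq\pi\circ\theta\simeq\pi^\vee$.

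The main obstacle is the second step. One must check carefully, over $\k$ with $R$ modular, that every non-$\iota$-stable double coset is killed by cuspidality, i.e., that its stabilizer really contains a unipotent radical. If the direct orbit analysis becomes messy, I would instead first reduce to supercuspidal $\rho$, writing $\pi=\st_r(\rho)$ and using $\st_r(\rho)^\vee\simeq\st_r(\rho^\vee)$. For $\rho$ supercuspidal I would argue via a distinguished self-dual-compatible lift to characteristic zero, where Kapon's theorem \cite[Theorem A]{Kapon} applies.
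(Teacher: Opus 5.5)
\textbf{There is a genuine gap.} The key claim of your Step 2 is false, Step 3 would be vacuous even if it held, and your fallback omits the step that makes the paper's proof work.

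\emph{Step 2.} You want to kill the non-$\iota$-stable double cosets by cuspidality. This would need $H\cap gHg^{-1}$ to contain, after conjugation, the unipotent radical of a proper parabolic subgroup of $G$. That never happens, because no $G$-conjugate of $H=\GL_m(\k)\times\GL_m(\k)$ contains such a radical. It suffices to check maximal parabolics. Let $P$ stabilise a proper nonzero subspace $W$ of $V=V_1\oplus V_2$. Then $U_P$ contains all $1+w\otimes\lambda$ with $w\in W$ and $\lambda|_W=0$. These all preserve $V_1$ and $V_2$ only if $W\subseteq V_1\cap V_2$, or $V_1\subseteq W\subseteq V_2$, or $V_2\subseteq W\subseteq V_1$, and each of these is absurd. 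The analogy with \ref{A4} is misleading: there the relevant stabiliser is ${}_aG\subseteq G$ of a singular $a\in\Hh$, not a subgroup of $H$.

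The conclusion itself also fails. Take $m=1$, so $H=T$ is the diagonal torus of $\GL_2(\k)$. Then $B^-\setminus T$ is a single $T$-double coset, and transposition sends it to $B\setminus T$. Cuspidality only gives $\sum_{u\in N^-}F(u)=\La\bigl(\sum_{u\in N^-}\pi(u)v_0\bigr)=0$. Hence $F=-F(1)/(q-1)$ on $B^-\setminus T$, which is nonzero for $R=\mathbb{C}$. So cuspidality gives linear relations on such double cosets, not vanishing.

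\emph{Step 3.} With $\iota(g)={}^tg$, the right translates of $F\circ\iota$ span a quotient of $\pi^\vee\circ\theta=(\pi\circ\theta)^\vee$. By your Step 1 this is isomorphic to $\pi$, so $F=F\circ\iota$ only yields $\pi\simeq\pi$. To obtain $\pi\simeq\pi^\vee$ you would need $F(g^{-1})=F(g)$, i.e.\ the anti-involution $g\mapsto g^{-1}$, as in Proposition \ref{prop dble classes}.

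\emph{Fallback.} In $\pi=\st_r(\rho)$, the representation $\rho$ is not known to be distinguished; it can even be a character when $f=1$. So there is no distinguished lift of $\rho$ to feed into Kapon's theorem. The paper lifts $\pi$ itself instead:
\begin{itemize}
\item Adapting \cite[Lemma 5.8]{KMS}, $\pi$ is a component of $r_\ell(\pi_\ell)$ for some $H$-distinguished irreducible characteristic-zero $\pi_\ell$, which is in general not cuspidal.
\item \cite[Theorem A]{Kapon} gives $\pi_\ell\simeq\pi_\ell^\vee$.
\item Comparing cuspidal supports via \cite[Lemma 5.13]{KMS} and reducing modulo $\ell$ forces $\rho\simeq\rho^\vee$. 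The reason is that every component of $r_\ell(\pi_\ell)$ has supercuspidal support consisting of copies of $\rho$.
\item Hence $\pi\simeq\st_r(\rho^\vee)\simeq\pi^\vee$.
\end{itemize}
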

\begin{proof}
By a straightforward adaptation of \cite[Lemma 5.8]{KMS}, the representation~$\pi$ occurs as a component of~$r_{\ell}(\pi_{\ell})$  (see Section \ref{sec green param}) for~$\pi_{\ell}$ an irreducible representation of~$G$ which is~$H$-distinguished. Now, by \cite[Theorem A]{Kapon}, the representation~$\pi_{\ell}$ self-dual. 
Realizing~$\pi_{\ell}$ as a subrepresentation of~$\rho_1\times \dots \times \rho_t$ with~$\rho_i$ cuspidal, we deduce from \cite[Lemma 5.13]{KMS} that there exists an involution~$\iota$ in~$S_t$ such that~$\rho_{\iota(i)}\simeq \rho_i^\vee$ for all~$i=1,\dots,t$. Now note that if we write~$\pi=\st_r(\rho)$ for~$\rho$ supercuspidal, then following \cite[5.9]{KMS}, for each~$i=1,\dots,r$ we have~$r_{\ell}(\rho_i)=\st_{r_i}(\rho)$ for an appropriate choice of~$r_i$. The relation~$\rho_{\iota(1)}\simeq \rho_1^\vee$ then implies that 
$\st_{\iota(r_1)}(\rho)\simeq \st_{r_i}(\rho)^\vee$, hence that~$\iota(r_1)=r_1$ and that~$\rho$ is self-dual. In particular~$\pi$ is self-dual. 
\end{proof}

\bibliographystyle{plain}
\bibliography{GJ}

\end{document}